\DeclareMathAlphabet{\mathpzc}{OT1}{pzc}{m}{it}
\author{Pedro Caro \footnote{Department of Mathematics, Universidad Aut\'onoma de Madrid, 28049 Madrid, Spain}}
\title{On an inverse problem in electromagnetism with local data: stability and uniqueness}
\date{May 26, 2010}
\newcommand{\bouU}{\partial U} 
\newcommand{\inte}{\Omega}
\newcommand{\curl}{\nabla \! \times \!}
\newcommand{\diver}{\nabla \cdot}
\newcommand{\Durl}{D \times \!}
\newcommand{\Diver}{D \cdot}
\newcommand{\bou}{\partial\Omega} 
\newcommand{\Div}{\textrm{Div}} 
\newcommand{\frv}[4]{\left(\begin{array}{c} 
#1\\ #2\\
\hline
#3\\ #4\\
\end{array}\right)}
\newcommand{\fcv}[4]{\left(\begin{array}{c c|c c} 
#1 & #2 & #3 & #4
\end{array}\right)}
\newcommand{\fbM}[4]{\left(\begin{array}{c|c} 
#1 & #2\\
\hline
#3 & #4
\end{array}\right)}
\newcommand{\subM}[4]{\begin{array}{c c} 
#1 & #2\\
#3 & #4
\end{array}}
\newcommand{\inner}[2]{#1 \! \cdot \! #2} 
\newcommand{\innerf}[2]{\langle #1, #2 \rangle} 
\newcommand{\Inner}[2]{\left( #1 \middle| #2 \right)} 
\newcommand{\norm}[3]{\left\|#1\right\|^{#2}_{#3}}  
\newcommand{\cross}[2]{#1 \! \times \! #2} 
\newcommand{\crossf}[2]{\ast ( #1 \wedge #2 )} 
\newcommand{\dual}[2]{\left\langle #1 \middle| #2 \right\rangle} 
\newcommand{\Hdiv}[1]{H^{#1}(\inte; \textrm{div})} 
\newcommand{\Hcurl}[1]{H^{#1}(\inte; \textrm{curl})} 
\newcommand{\HcurlU}[1]{H^{#1}(U; \textrm{curl})} 
\newcommand{\ssec}[1]{\Lambda^{#1} T \mathbb{E}} 
\newtheorem{theorem}{Theorem}
\newtheorem{definition}{Definition}
\newtheorem{lemma}[definition]{Lemma}
\newtheorem{proposition}[definition]{Proposition}
\newtheorem{corollary}{Corollary}
\newenvironment{proof}{\textbf{Proof:}} {\hspace{\stretch{1}}$\Box$}
\begin{document}

\maketitle

\begin{abstract}
In this paper we prove a stable determination of the coefficients of the time-harmonic Maxwell equations from local boundary data. The argument --due to Isakov-- requires some restrictions on the domain.
\end{abstract}

\section*{Introduction}
Let $ \inte $ be a bounded Lipschitz domain in the three-dimensional euclidean space. Assume the medium, modeled by $ \inte $, to be non-homogeneous and isotropic. Suppose the electromagnetic properties of $ \inte $ to be described by the electric permittivity $ \varepsilon $, the magnetic permeability $ \mu $ and the electric conductivity $ \sigma $. Let $ E, H $ denote the electric and magnetic fields, respectively. The time-harmonic Maxwell equations at frequency $ \omega > 0 $ read
\[ dH + i \omega \varepsilon \ast\! E = \sigma \ast\! E, \qquad dE - i \omega \mu \ast\! H = 0, \]
whenever the total electric current density is given by $ \sigma \ast\! E $.
Writing $ \gamma = \varepsilon + i \sigma/ \omega $, the time-harmonic Maxwell equations can be expressed as
\begin{equation}\label{ME_without_sources}
\left\{ 
\begin{array}{l}
dH + i \omega \gamma \ast\! E = 0\\
dE - i \omega \mu \ast\! H = 0.\\
\end{array}
\right.
\end{equation}
It is known that this system may present positive \emph{resonant frequencies} even when the domain is assumed to be of class $ C^2 $ (see \cite{SIsCh}) or $ \sigma $ is assumed to vanish (see \cite{Le}). This means that, for some positive frequencies, the system may have non-trivial solutions of (\ref{ME_without_sources}) with zero boundary conditions.

In these notes, we shall study the inverse boundary value problem of determining in a stable manner the coefficients $ \mu, \varepsilon, \sigma $ by local boundary measurements. When setting this problem, the possible existence of resonant frequencies makes natural the use of the restricted Cauchy data set as a model of non-invasive measurements, instead of using either the restricted admittance or impedance maps. Cauchy data sets have been used successfully in \cite{BuU}, \cite{SaTz}, \cite{SaTz2} and \cite{C}.

Let $ \Gamma $ be a proper non-empty open subset of $ \bou $, the boundary of $ \inte $. Let $ \nu $ be $ 1 $-form defined by $ \nu = e(N,\centerdot) $ with $ N $ the outward unit vector normal to $ \bou $ and $ e $ the euclidean metric. Given a frequency $ \omega > 0 $, the \emph{Cauchy data set restricted to $ \Gamma $} is defined as follows: $ (T, S) \in C(\mu, \gamma; \Gamma) $ if and only if $ T \in TH_0(\Gamma) $, $ S \in TH(\Gamma) $, and there exists a pair $ (E, H) \in (\Hcurl{})^2 $ solution of (\ref{ME_without_sources}) satisfying $ \crossf{\nu}{E} = T $ and $ \crossf{\nu}{H}|_{\Gamma} = S $.

In order to quantify the proximity of the restricted Cauchy data sets we introduce a pseudo-metric distance which was already used in \cite{C}.

\begin{definition}\label{def:Cdistance_res}\rm
Let  $ \mu_1, \gamma_1 $ and $ \mu_2, \gamma_2 $ be two pairs of coefficients. Consider $ \omega $ a positive frequency and let $ C^j_\Gamma $ denote $ C(\mu_j, \gamma_j; \Gamma) $. Let us define the pseudo-metric distance between the restricted Cauchy data sets $ C^1_\Gamma $ and $ C^2_\Gamma $ as
\begin{equation*}
\delta_C (C^1_\Gamma, C^2_\Gamma) = \max_{j \neq k} \sup_{ \substack{(T_k, S_k) \in C^k_\Gamma \\ \norm{T_k}{}{TH_0(\Gamma)} = 1 }} \inf_{(T_j, S_j) \in C^j_\Gamma} \norm{(T_j, S_j) - (T_k, S_k)}{}{TH_0(\Gamma) \times TH(\Gamma)}.
\end{equation*}
\end{definition}

In order to state our result, we need stable determination of the problem on the boundary. Since this has not been proven yet, we shall introduce the following definitions.
\begin{definition}\label{def:admissible_loc}\rm
Given two constants $ M, s $ such that $ 0 < M $, $ 0 < s < 1/2 $, we shall say that the pair of coefficients $ \mu, \gamma $ is \emph{admissible} if they satisfy the following conditions.
\begin{itemize}
\item[(i)] \emph{Uniform ellipticity condition.} The coefficients $ \gamma, \mu \in C^{1,1}(\overline{\inte}) $ satisfy
\[ M^{-1} \leq \mathrm{Re}\,\gamma(x) \qquad M^{-1} \leq \mu(x); \]
for any $ x \in \inte $.
\item[(ii)] \emph{A priori bound on the boundary.} The following a priori bound holds on the boundary
\begin{equation*}
\norm{\gamma}{}{C^{0,1}(\overline{\Gamma})} + \norm{\mu}{}{C^{0,1}(\overline{\Gamma})} < M.
\end{equation*}
\item[(iii)] \emph{A priori bound in the interior.} The following a priori bounds hold in the interior
\begin{align*}
\norm{\gamma}{}{W^{2,\infty}(\inte)} + \norm{\mu}{}{W^{2,\infty}(\inte)} \leq M,& &
\norm{\gamma}{}{H^{2 + s}(\inte)} + \norm{\mu}{}{H^{2 + s}(\inte)} \leq M.
\end{align*}
\end{itemize}
\end{definition}

\begin{definition}\rm
Let $ M, s $ be the constants given in Definition \ref{def:admissible_loc} and let $ \omega $ be a positive frequency. We shall say that a pair $ \mu, \gamma $ is in the \emph{class of $ B $-stable coefficients on $ \Gamma $ at frequency $ \omega $} if $ \mu, \gamma $ is an admissible pair and there exists a modulus of continuity $ B $ such that, for any other admissible pair $ \tilde{\mu}, \tilde{\gamma} $, one has
\begin{gather*}
\norm{\gamma - \tilde{\gamma}}{}{C^{0,1}(\overline{\Gamma})} + \norm{\mu - \tilde{\mu}}{}{C^{0,1}(\overline{\Gamma})} \leq B \left( \delta_C(C_\Gamma, \tilde{C}_\Gamma) \right),\\
\norm{\nabla (\gamma - \tilde{\gamma})}{}{L^\infty(\Gamma; \mathbb{C}^3)} + \norm{\nabla (\mu - \tilde{\mu})}{}{L^\infty(\Gamma; \mathbb{C}^3)} \leq B \left( \delta_C(C_\Gamma, \tilde{C}_\Gamma) \right).
\end{gather*}
\end{definition}
Here $ C_\Gamma, \tilde{C}_\Gamma $ are the Cauchy data sets associated to the pairs $ \mu, \gamma $ and $ \tilde{\mu}, \tilde{\gamma} $, respectively.

The first idea in our argument is to construct special solutions vanishing on $ \bou \setminus \Gamma $, the inaccessible part of the boundary. In \cite{I2} Isakov proposed a reflection argument which allows to construct solutions for the conductivity equation with the desired behavior on the boundary. This argument was extended in \cite{COSa} to the time-harmonic Maxwell equation.

In order to carry out Isakov's approach it seems to be necessary to assume some geometrical restrictions about the domain, namely, the inaccessible part is supposed to be either part of a plane or part of a sphere. Despite this restriction, the method allows to prove the following result from local boundary data.

\begin{theorem}\label{th:stability_loc}\rm
Let $ U $ be either a \emph{suitable partially flat domain} or a \emph{suitable partially spherical domain} and let $ \omega $ be a positive frequency. Consider $ \mu_1, \gamma_1 $ and $ \mu_2, \gamma_2 $ any two pairs in the class of $ B $-stable coefficients on $ \Gamma $ at frequency $ \omega $, with $ B $ satisfying $ |r| \leq B(|r|) $ for all $ |r| < 1 $. Assume that $ \partial_N \mu_j = \partial_N \gamma_j = 0 $ on $ \bouU \setminus \overline{\Gamma} $ with $ j = 1,2 $. Then, there exists a constant $ C = C(M) $ such that the following estimate holds
\[ \norm{\gamma_1 - \gamma_2}{}{H^1(U)} + \norm{\mu_1 - \mu_2}{}{H^1(U)} \leq C\, |\mathrm{log}\, B(\delta_C(C^1_\Gamma, C^2_\Gamma))|^{-\lambda}, \]
for some constant $ \lambda $ such that $ 0 < \lambda < s^2/3 $. Here $ C^1_\Gamma, C^2_\Gamma $ are the restricted Cauchy data sets associated to the pairs $ \mu_1, \gamma_1 $ and $ \mu_2, \gamma_2 $, respectively.
\end{theorem}
The exact meaning of \emph{suitable} is explained in Subsection \ref{sec:geometryU}.

As in the inverse conductivity problem, it should be possible to prove that any admissible pair is in the class of H\"older-stable coefficients on $ \Gamma $ for any frequency $ \omega $, that is, with $ B(|r|) = |r|^\alpha $ for $ 0 < \alpha < 1 $. Note that we have obtained the same kind of stability as in the global data case (see \cite{C}).

From the point of view of applications it might be useful to suppose the coefficients to be equal on the accessible part of the boundary. In this particular case we get the following corollary.

\begin{corollary}\label{col:stability_loc}\rm
Let $ U $ be either a \emph{suitable partially flat domain} or a \emph{suitable partially spherical domain} and let $ \omega $ be a positive frequency. Consider $ \mu_1, \gamma_1 $ and $ \mu_2, \gamma_2 $ any two pairs of admissible coefficients. Assume that
\[ \mu_1|_{\Gamma} = \mu_2|_{\Gamma}, \, \partial_{x^j} \mu_1|_{\Gamma} = \partial_{x^j} \mu_2|_{\Gamma},\quad \gamma_1|_{\Gamma} = \gamma_2|_{\Gamma},\, \partial_{x^j} \gamma_1|_{\Gamma} = \partial_{x^j} \gamma_2|_{\Gamma} \]
and $ \partial_N \mu_k = \partial_N \gamma_k = 0 $ on $ \bouU \setminus \overline{\Gamma} $ with $ j = 1,2,3 $ and $ k = 1,2 $. Then, there exists a constant $ C = C(M) $ such that the following estimate holds
\[ \norm{\gamma_1 - \gamma_2}{}{H^1(U)} + \norm{\mu_1 - \mu_2}{}{H^1(U)} \leq C |\mathrm{log}\, \delta_C(C^1_\Gamma, C^2_\Gamma)|^{- \lambda}, \]
for some constant $ \lambda $ such that $ 0 < \lambda < s^2/3 $.
\end{corollary}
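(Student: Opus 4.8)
The plan is to retrace the proof of Theorem \ref{th:stability_loc} rather than to invoke it as a black box; indeed, since $B$ satisfies $|r|\leq B(|r|)$ one has $\delta_C\leq B(\delta_C)$, so $|\log B(\delta_C)|^{-\lambda}\geq|\log\delta_C|^{-\lambda}$ and the corollary is a strictly stronger conclusion than the theorem. The key observation is that the class of $B$-stable coefficients is used in the theorem for one purpose only: to control, in terms of $\delta_C$, the \emph{boundary defect} $d_\Gamma$, i.e. the sum of $\norm{\gamma_1-\gamma_2}{}{C^{0,1}(\overline{\Gamma})}$, $\norm{\mu_1-\mu_2}{}{C^{0,1}(\overline{\Gamma})}$ and the two gradient norms $\norm{\nabla(\gamma_1-\gamma_2)}{}{L^\infty(\Gamma;\mathbb{C}^3)}$, $\norm{\nabla(\mu_1-\mu_2)}{}{L^\infty(\Gamma;\mathbb{C}^3)}$. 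I would isolate the intermediate estimate produced by the theorem's argument, which is of the form $\norm{\gamma_1-\gamma_2}{}{H^1(U)}+\norm{\mu_1-\mu_2}{}{H^1(U)}\leq C\,|\log(d_\Gamma+\delta_C)|^{-\lambda}$; the $B$-stability hypothesis only serves to replace $d_\Gamma$ by $B(\delta_C)$. Under the hypotheses of the corollary, $\gamma_1|_\Gamma=\gamma_2|_\Gamma$, $\partial_{x^j}\gamma_1|_\Gamma=\partial_{x^j}\gamma_2|_\Gamma$ and likewise for $\mu$, so $d_\Gamma=0$ and the logarithm is driven by $\delta_C$ alone.

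Concretely, I would first reproduce the geometric reduction: since $U$ is a suitable partially flat or partially spherical domain and $\partial_N\mu_k=\partial_N\gamma_k=0$ on $\bouU\setminus\overline{\Gamma}$, Isakov's reflection across the flat (resp. spherical) face extends each pair to a larger domain $\widetilde{\inte}$ and furnishes special complex-geometric-optics solutions of \eqref{ME_without_sources} that vanish on $\bou\setminus\Gamma$. This step uses only the geometry and the Neumann condition, never $B$, so it is imported verbatim. I would then pair the special solutions attached to the two coefficient pairs to obtain, for a large parameter $\tau$, an integral identity of the schematic form
\[ \left|\int_U \big((\gamma_1-\gamma_2)\,(\cdots)+(\mu_1-\mu_2)\,(\cdots)\big)\right|\lesssim \mathcal{R}_\Gamma + e^{C\tau}\,\delta_C + \tau^{-\beta}, \]
where $\mathcal{R}_\Gamma$ gathers the boundary contributions of the integration by parts on $\Gamma$ and on the reflected face. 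The matching of the first-order jets of $\gamma_k,\mu_k$ on $\Gamma$, together with the vanishing normal derivatives on $\bouU\setminus\overline{\Gamma}$, is precisely what forces $\mathcal{R}_\Gamma=0$, leaving $e^{C\tau}\,\delta_C$ as the only data-dependent term.

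Finally I would optimise in $\tau$: choosing $\tau\sim|\log\delta_C|$ balances $e^{C\tau}\delta_C$ against $\tau^{-\beta}$, and interpolating the resulting negative-order control of the difference against the $H^{2+s}$ a priori bound of Definition \ref{def:admissible_loc}(iii) upgrades the integral estimate to $\norm{\gamma_1-\gamma_2}{}{H^1(U)}+\norm{\mu_1-\mu_2}{}{H^1(U)}\leq C\,|\log\delta_C|^{-\lambda}$ with $0<\lambda<s^2/3$. The step I expect to be the main obstacle is not the optimisation but the rigorous vanishing of $\mathcal{R}_\Gamma$: one must justify the integration by parts across the interface $\Gamma$ and across the reflected boundary with merely $C^{1,1}$ coefficients, verifying that agreement of the first-order jets on $\Gamma$ genuinely annihilates every boundary term (so that no surviving second-order trace reintroduces a defect), and that the reflected coefficients remain admissible so that the a priori bounds persist on $\widetilde{\inte}$. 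Once this bookkeeping is settled, the corollary is exactly the argument of Theorem \ref{th:stability_loc} run with $d_\Gamma=0$, hence with $B(\delta_C)$ replaced by $\delta_C$.
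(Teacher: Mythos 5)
Your proposal is correct and is essentially the paper's own (implicit) argument: the corollary is obtained by rerunning the proof of Theorem \ref{th:stability_loc} and observing that every occurrence of $B(\delta_C(C^1_\Gamma,C^2_\Gamma))$ there arises as a bound on a zeroth- or first-order boundary difference of the coefficients --- the $C^{0,1}(\overline{\Gamma})$ and $L^\infty(\Gamma)$ terms in Lemma \ref{le:1-suitable-estimate_loc} and Proposition \ref{le:2-suitable-estimate_loc}, and the $L^2(\bou)$ traces of $\phi_j$ and $\nabla\phi_j$ in the Carleman step --- all of which vanish exactly under the first-order jet agreement on $\Gamma$ (and, by the reflection, on all of $\bou$), leaving $\delta_C$ as the only data-dependent term. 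The obstacle you flag does not materialize, since no second-order trace of the coefficients ever enters the boundary terms of the paper's integration by parts.
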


Furthermore, if we follow the proof of Theorem \ref{th:stability_loc} one can state the following uniqueness result.

\begin{theorem}\label{th:uniqueness_loc}\rm
Let $ U $ be either a \emph{suitable partially flat domain} or a \emph{suitable partially spherical domain} and let $ \omega $ be a positive frequency. Consider $ \mu_1, \gamma_1 $ and $ \mu_2, \gamma_2 $ in $ C^{1,1}(\overline{U}) $ such that
\[ \mu_1|_{\Gamma} = \mu_2|_{\Gamma}, \, \partial_{x^j} \mu_1|_{\Gamma} = \partial_{x^j} \mu_2|_{\Gamma},\quad \gamma_1|_{\Gamma} = \gamma_2|_{\Gamma},\, \partial_{x^j} \gamma_1|_{\Gamma} = \partial_{x^j} \gamma_2|_{\Gamma}, \]
with $ j = 1,2,3 $. If additionally $ C^1_\Gamma = C^2_\Gamma $ and $ \partial_N \mu_k = \partial_N \gamma_k = 0 $ on $ \bouU \setminus \overline{\Gamma} $ with $ k = 1,2 $, then
\[ \mu_1 = \mu_2,\qquad \gamma_1 = \gamma_2 \]
in $ U $.
\end{theorem}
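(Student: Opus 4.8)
The plan is to follow the scheme of the proof of Theorem~\ref{th:stability_loc}, but to exploit the exact equality $ C^1_\Gamma = C^2_\Gamma $ so that every estimate collapses into an exact cancellation. Since equal Cauchy data sets force $ \delta_C(C^1_\Gamma, C^2_\Gamma) = 0 $, one may morally read the conclusion as the limiting case of the logarithmic stability estimate; however, because the coefficients are here only assumed to lie in $ C^{1,1}(\overline{U}) $ and need not be admissible in the quantitative sense, I would not invoke that estimate directly but rather rerun its proof qualitatively.

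First I would set $ \beta_\gamma = \gamma_1 - \gamma_2 $ and $ \beta_\mu = \mu_1 - \mu_2 $ and reflect the whole problem across the inaccessible part $ \bouU \setminus \overline{\Gamma} $, which by hypothesis is part of a plane or of a sphere. The equalities $ \mu_1 = \mu_2 $, $ \gamma_1 = \gamma_2 $ together with $ \partial_{x^j}\mu_1 = \partial_{x^j}\mu_2 $, $ \partial_{x^j}\gamma_1 = \partial_{x^j}\gamma_2 $ on $ \Gamma $ mean that $ \beta_\gamma $ and $ \beta_\mu $ vanish to first order on $ \Gamma $, so they extend by zero across $ \Gamma $ without loss of regularity. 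The assumption $ \partial_N \mu_k = \partial_N \gamma_k = 0 $ on the inaccessible part is precisely what guarantees that the even reflection of each coefficient across the plane or sphere remains $ C^{1,1} $, so that on a doubled domain $ \widetilde{U} $ the reflected coefficients again define a Maxwell system of the same kind; this is the reflection device of Isakov from \cite{I2}, extended to Maxwell in \cite{COSa}.

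Next I would construct complex geometrical optics solutions $ (E_j, H_j) $ of (\ref{ME_without_sources}) for the coefficients $ \mu_j, \gamma_j $ on $ \widetilde{U} $, of the form $ e^{\zeta \cdot x} $ times a bounded amplitude plus a controlled remainder, where $ \zeta \in \mathbb{C}^3 $ depends on a large parameter $ \tau $ with $ \zeta \cdot \zeta = 0 $. The reflection symmetry of the construction forces $ \crossf{\nu}{E_j} = 0 $ and $ \crossf{\nu}{H_j} = 0 $ on the reflected inaccessible boundary, so their restrictions to $ \Gamma $ are genuine elements of the restricted Cauchy data set. Because $ C^1_\Gamma = C^2_\Gamma $, to the solution built for $ \mu_1, \gamma_1 $ there corresponds one for $ \mu_2, \gamma_2 $ sharing the same tangential traces on $ \Gamma $; subtracting and integrating by parts (Stokes/Green for the Maxwell system), all boundary contributions cancel exactly --- those on $ \Gamma $ by equality of Cauchy data and those on the inaccessible part by the vanishing tangential traces. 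This leaves an interior identity of the form $ \int_{\widetilde{U}} \big( \beta_\gamma, \beta_\mu \text{ and their first derivatives} \big) \cdot \big( \text{products of the two CGO amplitudes} \big) = 0 $, valid for every admissible $ \zeta $.

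Finally I would let $ \tau \to \infty $. The leading amplitudes combine into $ e^{i\, \xi \cdot x} $ for an arbitrary real $ \xi $, so the identity reduces to the vanishing of the Fourier transforms of $ \beta_\gamma $, $ \beta_\mu $ (and of the relevant first-order combinations) at every $ \xi $; hence $ \beta_\gamma = \beta_\mu = 0 $ on $ \widetilde{U} $, and in particular $ \mu_1 = \mu_2 $, $ \gamma_1 = \gamma_2 $ in $ U $. The main obstacle is the same as in Theorem~\ref{th:stability_loc}: controlling the CGO remainders sharply enough that the limit $ \tau \to \infty $ genuinely isolates these Fourier transforms, which for a first-order coupled system requires symmetrising the Maxwell equations into a Schr\"odinger-type system so that the magnetic first-order perturbation is absorbed, and then checking that the reflected coefficients across the plane or sphere retain enough regularity for that symmetrisation to survive at the interface. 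This is exactly where the \emph{suitable} geometric hypotheses on $ U $ and the vanishing normal derivatives on $ \bouU \setminus \overline{\Gamma} $ are indispensable.
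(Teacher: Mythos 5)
Your overall scheme --- even reflection across the inaccessible part, CGO solutions compatible with the reflection, an integral identity coming from $ C^1_\Gamma = C^2_\Gamma $, then $ \tau \to \infty $ --- is the one the paper intends, since Theorem \ref{th:uniqueness_loc} is proved by rerunning the proof of Theorem \ref{th:stability_loc}. However, your final step has a genuine gap. The identity that this machinery produces is $ \Inner{(Q_1 - Q_2)Z_1}{\mathpzc{Y}_2}_\inte = 0 $, and letting $ \tau \to \infty $ does \emph{not} yield the vanishing of the Fourier transforms of $ \gamma_1 - \gamma_2 $ and $ \mu_1 - \mu_2 $: the entries of $ Q_1 - Q_2 $ are second-order expressions in $ \alpha = \log\gamma $ and $ \beta = \log\mu $, so what one actually obtains is $ \widehat{f}(\xi) = \widehat{g}(\xi) = 0 $ with $ f = \tfrac12 \Delta(\alpha_1 - \alpha_2) + \tfrac14 ( \inner{\nabla\alpha_1}{\nabla\alpha_1} - \inner{\nabla\alpha_2}{\nabla\alpha_2} ) + (\kappa_2^2 - \kappa_1^2) $ and the analogous $ g $. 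Hence $ f = g = 0 $, which is only a coupled second-order elliptic system for $ \phi_1 = \tilde{\gamma}_1^{1/2} - \tilde{\gamma}_2^{1/2} $ and $ \phi_2 = \tilde{\mu}_1^{1/2} - \tilde{\mu}_2^{1/2} $, not yet the conclusion. The missing step is unique continuation: one must use that $ \phi_j $ and $ \nabla\phi_j $ vanish on $ \bou = \Gamma \cup \mathcal{R}(\Gamma) $ --- this is precisely where the hypothesis $ \mu_1|_\Gamma = \mu_2|_\Gamma $, $ \partial_{x^j}\mu_1|_\Gamma = \partial_{x^j}\mu_2|_\Gamma $, etc.\ enters, whereas in your write-up it is only used for an ``extension by zero'' that plays no further role --- and then apply the Carleman estimate of the last subsection of Section 2 (or any unique continuation result from Cauchy data for $ \Delta $ plus bounded coefficients) to conclude $ \phi_1 = \phi_2 = 0 $. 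Without this step the argument does not close.

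Two smaller inaccuracies. First, the reflection symmetry gives $ \cross{N}{(E - \dot{E})}|_{\Gamma_0} = 0 $ but \emph{not} $ \cross{N}{(H - \dot{H})}|_{\Gamma_0} = 0 $ (the magnetic field is reflected with the opposite sign, so it is the normal component of $ \mathpzc{H} $ that vanishes on $ \Gamma_0 $); only the electric tangential trace needs to be supported in $ \overline{\Gamma} $ for the pair to belong to the restricted Cauchy data set, so this does not break the proof, but as stated it is false. Second, after replacing $ Y_2 $ by $ \mathpzc{Y}_2 = Y_2 - \dot{Y}_2 $ the identity contains the cross term $ \Inner{(Q_1 - Q_2)Z_1}{\dot{Y}_2}_\inte $, whose phase does not reduce to $ e^{-i\xi\cdot x} $; one must show it tends to zero as $ \tau \to \infty $ (in the paper this is Lemma \ref{le:RLlemma}; for uniqueness the classical qualitative Riemann--Lebesgue lemma suffices since $ Q_1 - Q_2 \in L^1 $). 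Your proposal does not address this term at all.
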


As in the inverse conductivity problem, it should be possible to prove that the coefficients are equal on the accessible part of the boundary $ \Gamma $ whenever $ C^1_\Gamma = C^2_\Gamma $.

The problem of determining the electromagnetic coefficients by data taken on the entire boundary has been studied by several authors. The unique recovery of $ C^3 $-coefficients $ \gamma $ and $ \mu $ from boundary data was proved in \cite{OPS}, and later simplified in \cite{OS}. Boundary determination results were given in \cite{JMcD} in the case that the boundary is smooth. The more general chiral media was studied in \cite{McD2}.  For a slightly more general approach and more background information, see also the review article \cite{OPS2}.

The inverse problem of determining the electromagnetic coefficients from partial data has been much less considered. As far as the author knows the only work in that direction is \cite{COSa}.

Two different approaches have been used to attack the inverse conductivity problem from partial boundary data. The first one was proposed in \cite{BuU} and generalized in \cite{KSjU}. In \cite{HeW}, this method was used to give a log-log-stable determination in the framework of \cite{BuU}. In this approach there are not any strong geometrical restriction about the domain but the partial measurements have to be taken in the whole boundary. Getting an optimal stability (i. e. a stability with a log-type modulus of continuity) in the context of \cite{BuU} may be difficult and the stability for \cite{KSjU} is an open question. The second approach for partial data was proposed in \cite{I2} and the optimal stable determination was stated in \cite{HeW2}. As we have already mentioned, this argument requires a strong restriction on the domain. However, the measurements are localized on the accessible part of the boundary and it is possible to get the optimal stable determination. These two facts are very important from the point of view of applications. For instance, Alessandrini and Vessella proved in \cite{AV} that a logarithmic estimate yields Lipschitz stability for some finite dimensional spaces of conductivities.

These two approaches have been extended to systems. In \cite{SaTz2} Salo and Tzou followed the spirit of \cite{KSjU} to prove uniqueness in the context of Dirac's equation. Isakov's argument was extended in \cite{COSa} to Maxwell's equations. The proof given here takes some ideas from \cite{KSaU} and it turns out to be more convenient and useful for us than the proof given in \cite{COSa}. In fact, it avoids the long computations made there to prove the thesis of Theorem \ref{th:uniqueness_loc} and it allows to relax the hypothesis about the domain and the smoothness of the coefficients. In \cite{COSa} the domain was assumed to be of class $ C^{1,1} $ and the coefficients were assumed to be $ C^4 $. Besides, a technical hypothesis about the extension of the coefficients had to be supposed.

An overview of the paper is the following. It has three sections. In the first one, we give some preliminaries about the functional spaces used. In the second section we prove our results when $ U $ is partially flat. To achieve this, we use a reflection argument to construct special solutions vanishing on the inaccessible part of the boundary. In the third section we connect the flat case with the spherical one by means of the Kelvin transform.

\paragraph{Acknowledgement.} This paper is part of the author's doctoral dissertation and it has been written under the supervision of Alberto Ruiz. The author would like to thank him for his support and dedication. The author would also like to thank Petri Ola and Mikko Salo for their invitation to inverse problems in electromagnetism. The author was economically supported by Ministerio de Ciencia e Innovaci\'on de Espa\~na, MTM2008-02568-C02-01.

\section{Preliminaries}
Let $ \mathbb{E} $ be the three-dimensional euclidean point space and let its tangent bundle be denoted by   $ T\mathbb{E} $. Let $ \mathcal{T}\mathbb{E} $ be the module of smooth vector fields over the real smooth functions $ C^\infty(\mathbb{E}; \mathbb{R}) $ and define
\[ \mathcal{X} \mathbb{E} = \{ u + iv : u,v \in \mathcal{T} \mathbb{E} \} .\]
The elements of $ \mathcal{X} \mathbb{E} $ will be called complex vector fields. Let the bundle of alternating tensors be denoted by $ \ssec{k} $ with $ k = 0,1,2,3 $. Let $ \mathcal{A}^k \mathbb{E} $ be the vector space of differential $ k $-forms and define
\[ \Lambda^k \mathbb{E} = \{ \omega + i\eta : \omega,\eta \in \mathcal{A}^k \mathbb{E} \}. \]
The elements of $ \Lambda^k \mathbb{E} $ will be called complex $ k $-forms. Recall that $ 0 $-forms are smooth functions by definition. As it is usual, $ d $ and $ \wedge $ denote the exterior derivative operator and the exterior product of forms, respectively.

The euclidean metric $ e $ induces a volume element denoted by $ dV $, a distance denoted by $ d_e $ and a point-wise inner product denoted by $ \langle \omega, \eta \rangle $ for any $ \omega, \eta \in \mathcal{A}^k \mathbb{E} $ with $ k=0,1,2,3 $. Recall that the Hodge star operator is the unique bundle map $ \ast : \ssec{k} \longrightarrow \ssec{3-k} $ satisfying
\[ \omega \wedge \ast \eta = \langle \omega, \eta \rangle \, dV.\]
Moreover, $ \ast \ast \omega = \omega $. Let us define $ |\eta|^2 = \innerf{\eta}{\eta} $.

The formal adjoint of $ d $ will be denoted by $ \delta $ and it can be expressed by
\[ \delta \eta = (-1)^k \ast d \ast \eta \]
for $ \eta \in \mathcal{A}^k $. Let us define the laplacian on $ k $-forms as $ -\Delta := \delta d + d \delta $.

We also recall that we can identify vectors and $ 1 $-forms by means of the metric, that is,
\begin{equation}\label{eq:vectors-covectors}
u \in \mathcal{T} \mathbb{E} \longmapsto \eta = e(u,\centerdot) \in \mathcal{A}^1 \mathbb{E}.
\end{equation}
If $ u \in \mathcal{T} \mathbb{E} $, its corresponding $ 1 $-form will be denoted by $ u^\flat $. However if the difference is clear by the context it will be denoted by $ u $. On the other hand, if $ v \in \mathcal{A}^1 \mathbb{E} $, its corresponding vector field will be denoted by $ v^\sharp $. As before, this notation will be used whenever the context is not clear.

Finally, for any $ f \in C^\infty(\mathbb{E}; \mathbb{R}) $ and any $ u, v \in \mathcal{T} \mathbb{E} $, $ u \cdot v = e(u, v) $ denotes the point-wise inner product, $ u \times v = \ast(u^\flat \wedge v^\flat)^\sharp $ denotes the point-wise cross product and $ \nabla f = (df)^\sharp $, $ \diver u = - \delta u^\flat $ and $ \curl u = (\ast du^\flat)^\sharp $ stand for the gradient, divergence and curl, respectively.

\subsection{The functional spaces}
Along these notes we shall say that a domain $ \inte $ is Lipschitz if its boundary $ \bou $ is locally the graph of a Lipschitz function. Additionally, $ N $ denotes the outward unit vector normal to $ \bou $ and $ \nu := N^\flat $ is its corresponding $ 1 $-form.

In order to perform the proofs of the results stated in the introduction we require some standard functional spaces: $ H^s(\mathbb{E}), H^s(\inte), H^s_0(\inte) $ with $ s \in \mathbb{R} $ denote the potential Sobolev spaces based in $ L^2 $; $ W^{2, \infty}(\inte) $ stands for the Sobolev space with two derivaties in $ L^\infty $; $ B^{s}(\bou) $ with $ 0 < |s| < 1 $ denotes the Besov spaces $ B^s_{p,q}(\bou) $ with $ p = q = 2 $. A quite complete description of these spaces can be found in \cite{JK}.

Additionally, when working with Maxwell's equations other non-standard Sobolev and Besov spaces turn to be useful. Those are mainly, $ \Hdiv{} $, $ \Hcurl{} $ and $ TH(\bou) $. The first one corresponds to the fields or $ 1 $-forms in $ L^2 $ with divergence in $ L^2 $. In this space, it makes sense the normal traces as elements of $ B^{-1/2}(\bou) $. The space $ \Hcurl{} $ corresponds to the fields or $ 1 $-forms in $ L^2 $ whose curl is in $ L^2 $. The tangential traces of elements of $ \Hcurl{} $ make sense as elements of $ B^{-1/2}(\bou; \mathbb{C}^3) $ or $ B^{-1/2}(\bou; \ssec{1}) $. The space $ TH(\bou) $ is defined as the space of tangential traces of $ \Hcurl{} $ and we have that
\begin{gather}
\label{es:equivalent_norm_TH-C1}
\norm{w}{}{B^{-1/2}(\bou; \mathbb{C}^3)} + \norm{\Div \, w}{}{B^{-1/2}(\bou)} \leq C_1 \norm{w}{}{TH(\bou)}, \\
\label{es:equivalent_norm_TH-C2}
\norm{w}{}{TH(\bou)} \leq C_2 \left( \norm{w}{}{B^{-1/2}(\bou; \mathbb{C}^3)} + \norm{\Div \, w}{}{B^{-1/2}(\bou)} \right),
\end{gather}
if $ w $ is a vector field, or
\begin{gather}
\label{es:equivalent_norm_TH-C1_forms}
\norm{w}{}{B^{-1/2}(\bou; \Lambda^1 T\mathbb{E})} + \norm{\Div \, w}{}{B^{-1/2}(\bou)} \leq C_1 \norm{w}{}{TH(\bou)}, \\
\label{es:equivalent_norm_TH-C2_forms}
\norm{w}{}{TH(\bou)} \leq C_2 \left( \norm{w}{}{B^{-1/2}(\bou; \Lambda^1 T\mathbb{E})} + \norm{\Div \, w}{}{B^{-1/2}(\bou)} \right),
\end{gather}
if $ w $ is a $ 1 $-form. Here $ \Div $ stands for the surface divergence. Recall that the surface divergence of $ w \in TH(\bou) $ makes sense, and it can be defined as an element of $ B^{1/2}(\bou) $ in the following way:
\[ \Div \, w = -\inner{N}{(\curl u)},\qquad \Div \, w = -\langle \nu, \ast du \rangle \]
where $ u \in \Hcurl{} $ and $ \cross{N}{u} = w $, if $ u $ is a vector field, or $ \ast( \nu \wedge u ) = w $, if $ u $ is a $ 1 $-form.

Finally, we recall some key points. For any $ u \in \Hdiv{} $ and any $ g \in B^{1/2}(\bou) $ we have
\begin{equation}\label{def:normal-component}
\dual{\inner{N}{u}}{g} = \int_\inte (\diver u) \overline{f} \, dV + \int_\inte \inner{u}{\overline{\nabla f}} \, dV,
\end{equation}
where $ u $ is a vector field, $ f \in H^1(\inte) $ and $ f|_{\bou} = g $. We also have
\begin{equation}\label{def:normal-component_forms}
\dual{\langle \nu, u \rangle}{g} = - \int_\inte (\delta u) \overline{f} \, dV + \int_\inte \langle u, \overline{d f} \rangle \, dV,
\end{equation}
where $ u $ is a $ 1 $-form, $ f \in H^1(\inte) $ and $ f|_{\bou} = g $.

The maps
\[ \cross{N}{\centerdot} : TH(\bou) \longrightarrow (TH(\bou))^\ast, \qquad \ast(\nu \wedge \centerdot) : TH(\bou) \longrightarrow (TH(\bou))^\ast \]
are isomorphisms. In particular,
\begin{equation}\label{for:duality_TH-TH*}
\int_\inte \inner{(\curl u)}{\overline{v}} \, dV = \int_\inte \inner{u}{(\overline{\curl v})} \, dV -\dual{\cross{N}{u}}{\cross{N}{(\cross{N}{v})}},
\end{equation}
for vector fields, and
\begin{equation}\label{for:duality_TH-TH*_forms}
\int_\inte \langle \ast d u, \overline{v} \rangle \, dV = \int_\inte \langle u, \overline{\ast d v} \rangle \, dV -\dual{\ast(\nu \wedge u)}{\ast(\nu \wedge \ast(\nu \wedge v))},
\end{equation}
for $ 1 $-forms.

A detailed exposition of the collected facts can be seen in \cite{M} and \cite{C}.

\subsection{Some remarks on the boundary}
Along this section, $ \inte $ denotes any bounded Lipschitz domain, $ \bou $ denotes its boundary and $ \Gamma $ stands for a proper non-empty open subset of $ \bou $. Moreover, $ \centerdot |_\Gamma $ and $ \centerdot |_{\overline{\Gamma}} $ denote the restriction to $ \Gamma $ and $ \overline{\Gamma} $, respectively.

\begin{definition}\rm For $ 0 < s < 1 $, define the space $ B^{s}(\Gamma) $ as
\[ B^s(\Gamma) = \{ f|_\Gamma : f \in B^s(\bou) \}, \]
with the norm
\[ \norm{g}{}{B^s(\Gamma)} = \textrm{inf} \{ \norm{f}{}{B^s(\bou)} : f|_\Gamma = g \}. \]
On the other hand, for $ 0 < |s| < 1 $, define
\[ B^s_0(\Gamma) = \{ f \in B^s(\bou) : \mathrm{supp}\, f \subset \overline{\Gamma} \}, \]
with norm
\[ \norm{f}{}{B^s_0(\Gamma)} = \norm{f}{}{B^s(\bou)}. \]
Finally, for $ 0 < s < 1 $, define the space $ B^{-s}(\Gamma) $ as the dual of $ B^s_0(\Gamma) $, that is,
\[ B^{-s}(\Gamma) = (B^s_0(\Gamma))^*. \]
\end{definition}
Note that, for $ 0 < s < 1 $, $ B^{-s}_0(\Gamma) $ is the dual space of $ B^s(\Gamma) $, that is,
\[ B^{-s}_0(\Gamma) = (B^s(\Gamma))^*. \]

\begin{lemma}\label{le:besov_product}\rm
Let $ s, \epsilon $ be such that $ 0 < s < 1 $ and $ 0 < \epsilon \leq 1-s $. Then there exists a constant $ C(s,\epsilon) > 0 $ such that,
\begin{itemize}
\item[(a)] for any $ g \in C^{0,s+\epsilon}(\bou) $ and any $ f \in B^s(\bou) $,
\begin{equation}\label{es:besov_product}
\norm{gf}{}{B^s(\bou)} \leq C \norm{g}{}{C^{0,s+\epsilon}(\bou)} \norm{f}{}{B^s(\bou)};
\end{equation}
\item[(b)] for any $ g : \bou \longrightarrow \mathbb{C} $ with $ g \in C^{0,s+\epsilon}(\overline{\Gamma}) $ and any $ f \in B^s_0(\Gamma) $,
\begin{equation}\label{es:besov_product_0}
\norm{gf}{}{B^s_0(\Gamma)} \leq C \norm{g}{}{C^{0,s+\epsilon}(\overline{\Gamma})} \norm{f}{}{B^s_0(\Gamma)};
\end{equation}
\item[(c)] for any $ g : \bou \longrightarrow \mathbb{C} $ with $ g \in C^{0,s+\epsilon}(\overline{\Gamma}) $ and any $ f \in B^s(\Gamma) $,
\begin{equation}\label{es:besov_product_Gamma}
\norm{g|_\Gamma f}{}{B^s(\Gamma)} \leq C \norm{g}{}{C^{0,s+\epsilon}(\overline{\Gamma})} \norm{f}{}{B^s(\Gamma)}.
\end{equation}
\end{itemize}
\end{lemma}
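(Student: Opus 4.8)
The plan is to reduce everything to part (a), which carries the analytic content, and then obtain (b) and (c) by routine extension arguments. For (a) I would use that, since $0 < s < 1$, the norm of $B^s(\bou) = B^s_{2,2}(\bou)$ is equivalent to the sum of the $L^2$-norm and the Gagliardo--Slobodeckij seminorm
\[ [f]_s^2 = \int_\bou \int_\bou \frac{|f(x) - f(y)|^2}{d_e(x,y)^{2 + 2s}} \, dS(x)\, dS(y), \]
where $dS$ is the surface measure on $\bou$ and the exponent $2$ is the dimension of the surface. The $L^2$ part of the estimate is immediate, since $\|gf\|_{L^2} \leq \|g\|_{L^\infty} \|f\|_{L^2}$ and $\|g\|_{L^\infty(\bou)} \leq \|g\|_{C^{0,s+\epsilon}(\bou)}$.

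For the seminorm I would split, for $x \neq y$,
\[ g(x)f(x) - g(y)f(y) = g(x)\bigl(f(x) - f(y)\bigr) + f(y)\bigl(g(x) - g(y)\bigr). \]
The first term is controlled by $\|g\|_{L^\infty}$ and contributes $\|g\|_{L^\infty}^2\, [f]_s^2$ after insertion into the double integral. For the second term I would use the H\"older continuity $|g(x) - g(y)| \leq [g]_{C^{0,s+\epsilon}}\, d_e(x,y)^{s+\epsilon}$, so that the integrand is bounded by $[g]_{C^{0,s+\epsilon}}^2 |f(y)|^2\, d_e(x,y)^{2\epsilon - 2}$; integrating first in $x$, the inner integral $\int_\bou d_e(x,y)^{2\epsilon - 2}\, dS(x)$ is finite and bounded uniformly in $y$ precisely because $\epsilon > 0$ makes $2 - 2\epsilon < 2$ a locally integrable singularity on the two-dimensional surface, and the remaining integration in $y$ yields $[g]_{C^{0,s+\epsilon}}^2 \|f\|_{L^2}^2$. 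Collecting the two contributions gives (a). I expect this integrability estimate to be the main (if mild) obstacle: one must check $\int_\bou d_e(x,y)^{2\epsilon-2}\, dS(x) \leq C(\epsilon)$ uniformly in $y$, which follows from a local flattening of the Lipschitz boundary together with the comparability of $d_e$ with the flattened Euclidean distance. The hypothesis $\epsilon \leq 1 - s$ serves only to guarantee $s + \epsilon \leq 1$, so that $C^{0,s+\epsilon}$ is a genuine H\"older space.

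To pass to (b) and (c) I would first record a H\"older extension lemma: any $g \in C^{0,s+\epsilon}(\overline{\Gamma})$ admits an extension $\tilde g \in C^{0,s+\epsilon}(\bou)$ with $\|\tilde g\|_{C^{0,s+\epsilon}(\bou)} \leq C \|g\|_{C^{0,s+\epsilon}(\overline{\Gamma})}$. For real-valued $g$ this is the McShane formula $\tilde g(x) = \inf_{y \in \overline{\Gamma}}\bigl( g(y) + [g]_{C^{0,s+\epsilon}}\, d_e(x,y)^{s+\epsilon}\bigr)$, using that $x \mapsto d_e(x,y)^{s+\epsilon}$ is $(s+\epsilon)$-H\"older with constant $1$ and that an infimum of uniformly H\"older functions keeps the same seminorm; a truncation at level $\|g\|_{L^\infty(\overline{\Gamma})}$ (a $1$-Lipschitz operation, hence non-increasing on the H\"older seminorm) controls the sup norm without altering the values on $\overline{\Gamma}$, and the complex case follows by treating real and imaginary parts separately. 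For (b), given $f \in B^s_0(\Gamma)$ I would apply (a) to $\tilde g$ and $f$; since $\mathrm{supp}\, f \subset \overline{\Gamma}$ one has $\tilde g f = g f$ with $\mathrm{supp}(gf) \subset \overline{\Gamma}$, whence $gf \in B^s_0(\Gamma)$ and $\|gf\|_{B^s_0(\Gamma)} = \|\tilde g f\|_{B^s(\bou)} \leq C \|\tilde g\|_{C^{0,s+\epsilon}(\bou)} \|f\|_{B^s(\bou)}$, which is the claim after using $\|f\|_{B^s_0(\Gamma)} = \|f\|_{B^s(\bou)}$.

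Finally, for (c), given $f \in B^s(\Gamma)$ I would choose an arbitrary $F \in B^s(\bou)$ with $F|_\Gamma = f$; then $\tilde g F \in B^s(\bou)$ by (a) and $(\tilde g F)|_\Gamma = g|_\Gamma f$, so by definition of the restriction norm
\[ \|g|_\Gamma f\|_{B^s(\Gamma)} \leq \|\tilde g F\|_{B^s(\bou)} \leq C \|g\|_{C^{0,s+\epsilon}(\overline{\Gamma})} \|F\|_{B^s(\bou)}. \]
Taking the infimum over all extensions $F$ of $f$ yields $\|g|_\Gamma f\|_{B^s(\Gamma)} \leq C \|g\|_{C^{0,s+\epsilon}(\overline{\Gamma})} \|f\|_{B^s(\Gamma)}$, as desired. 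The only genuine work is thus the singular-integral bound in (a); parts (b) and (c) are formal consequences of (a) and the extension lemma.
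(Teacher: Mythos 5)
Your proof is correct and, for parts (b) and (c), follows exactly the paper's route: extend $g|_{\overline{\Gamma}}$ to a H\"older function on all of $\bou$ with controlled norm and reduce to part (a). For part (a) the paper simply cites an earlier work of the author, whereas you supply the standard direct argument via the Gagliardo--Slobodeckij seminorm (splitting the product difference and using the uniform integrability of $d_e(x,y)^{2\epsilon-2}$ over the two-dimensional Lipschitz boundary), and your McShane construction of the H\"older extension is a valid, more explicit alternative to the Whitney-type extension from Stein that the paper invokes; both substitutions are sound.
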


\textbf{Remark:} The constant $ C(s,\epsilon) $ given here blows up when $ \epsilon $ becomes small.

\begin{proof}
(a) was proven in \cite{C}. (b) and (c) follow easily from (a) taking $ \tilde{g} \in C^{0,s+\epsilon}(\bou) $ an extension of $ g|_{\overline{\Gamma}} $ such that
\[ \norm{\tilde{g}}{}{C^{0,s + \epsilon}(\bou)} \leq C \norm{g}{}{C^{0,s + \epsilon}(\overline{\Gamma})}. \]
Regarding to extensions from a closed subset of $ \mathbb{E} $, see \cite{St}.
\end{proof}

\begin{definition}\rm Define the space $ TH(\Gamma) $ as
\[ TH(\Gamma) = \{ w|_\Gamma : w \in TH(\bou) \}, \]
with the norm
\[ \norm{z}{}{TH(\Gamma)} = \textrm{inf} \{ \norm{w}{}{TH(\bou)} : w|_\Gamma = z \}. \]
On the other hand, define
\[ TH_0(\Gamma) = \{ w \in TH(\bou) : \mathrm{supp}\, w \subset \overline{\Gamma} \}, \]
with norm
\[ \norm{w}{}{TH_0(\Gamma)} = \norm{w}{}{TH(\bou)}. \]
\end{definition}

\begin{lemma}\rm
Let $ N $ be the outward unit vector normal to $ \bou $ and let $ \nu $ be its associated $ 1 $-form. Then
\[ \cross{N}{TH_0(\Gamma)} = (TH(\Gamma))^*, \qquad \ast(\nu \wedge TH_0(\Gamma)) = (TH(\Gamma))^*. \]
\end{lemma}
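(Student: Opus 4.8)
The plan is to prove the first identity $\cross{N}{TH_0(\Gamma)} = (TH(\Gamma))^*$; the second, with $\crossf{\nu}{\centerdot}$ in place of $\cross{N}{\centerdot}$, follows by the same argument after the identification (\ref{eq:vectors-covectors}) of vector fields with $1$-forms, since both maps are the isomorphisms $TH(\bou) \to (TH(\bou))^*$ recalled in the Preliminaries. First I would realize $TH(\Gamma)$ as a quotient: the restriction $R : TH(\bou) \to TH(\Gamma)$, $Rw = w|_\Gamma$, is bounded and surjective and, since the infimum norm defining $TH(\Gamma)$ is precisely the quotient norm, $R$ induces an isometric isomorphism $TH(\bou)/\ker R \cong TH(\Gamma)$ with $\ker R = \{w \in TH(\bou) : w|_\Gamma = 0\}$ closed. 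The standard quotient--annihilator duality then makes the transpose $R^*$ an isometric isomorphism of $(TH(\Gamma))^*$ onto the annihilator $(\ker R)^\perp \subset (TH(\bou))^*$, acting by $R^*\Phi = \Phi(\,\centerdot\,|_\Gamma)$. Writing $J = \cross{N}{\centerdot}$ for the isomorphism onto the dual, the whole lemma reduces to showing $J(TH_0(\Gamma)) = (\ker R)^\perp$, i.e. that for $w \in TH(\bou)$ one has $\dual{\cross{N}{w}}{v} = 0$ for all $v \in \ker R$ if and only if $\mathrm{supp}\, w \subset \overline{\Gamma}$.

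For the two inclusions I would exploit that, on smooth tangential fields, $\dual{\cross{N}{w}}{v}$ is the boundary integral of the pointwise product $\inner{(\cross{N}{w})}{\overline{v}}$, so the pairing is local; by density and the equivalent norms (\ref{es:equivalent_norm_TH-C1})--(\ref{es:equivalent_norm_TH-C2}) this locality transfers to all of $TH(\bou)$, the relevant scalar pairings being governed by the Besov duality $B^{-s}_0(\Gamma) = (B^s(\Gamma))^*$. If $w \in TH_0(\Gamma)$ then $\mathrm{supp}(\cross{N}{w}) \subset \overline{\Gamma}$, while every $v \in \ker R$ has $\mathrm{supp}\, v \subset \bou \setminus \Gamma$; the two supports meet only in the negligible relative boundary $\partial\Gamma$, so the pairing vanishes and $J(TH_0(\Gamma)) \subset (\ker R)^\perp$. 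Conversely, suppose $\dual{\cross{N}{w}}{v} = 0$ for every $v \in TH(\bou)$ supported in the open set $\bou \setminus \overline{\Gamma}$ (all such $v$ lie in $\ker R$). This says $\cross{N}{w}$ vanishes, as an element of the dual, over that open set; since $\cross{N}{(\cross{N}{w})} = -w$ pointwise on tangential fields, it follows that $w = 0$ on $\bou \setminus \overline{\Gamma}$, i.e. $\mathrm{supp}\, w \subset \overline{\Gamma}$ and $w \in TH_0(\Gamma)$. Together the inclusions give $J(TH_0(\Gamma)) = (\ker R)^\perp = R^*((TH(\Gamma))^*)$, and matching norms through the open mapping theorem yields $\cross{N}{TH_0(\Gamma)} = (TH(\Gamma))^*$ with equivalent norms.

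The hard part will be making the twisted pairing rigorous at the critical regularity of $TH(\bou)$ and $(TH(\bou))^*$, where elements are only $B^{-1/2}$-type objects: both the ``essentially disjoint supports imply zero'' step and the local nondegeneracy step must be justified for such low-regularity fields, and in particular the behaviour across the relative boundary $\partial\Gamma$ needs care. I would handle this by approximating with smooth tangential fields, where the pairing is a genuine surface integral, and by pushing the support and duality statements down to the scalar Besov spaces via the equivalent norms (\ref{es:equivalent_norm_TH-C1})--(\ref{es:equivalent_norm_TH-C2_forms}) and the surface-divergence identity $\Div\, w = -\inner{N}{(\curl u)}$, thereby reducing everything to the already available duality $B^{-s}_0(\Gamma) = (B^s(\Gamma))^*$.
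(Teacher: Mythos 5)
Your argument is essentially the paper's own proof in quotient--annihilator language: the paper likewise extends a functional $ l $ on $ TH(\Gamma) $ to $ \tilde{l}(w) = l(w|_\Gamma) $ on $ TH(\bou) $ (your $ R^* $), represents it via the isomorphism $ \cross{N}{\centerdot} : TH(\bou) \to (TH(\bou))^* $ by some $ z $ with $ \mathrm{supp}\, \cross{N}{z} \subset \overline{\Gamma} $, and conversely pairs $ \cross{N}{z} $ for $ z \in TH_0(\Gamma) $ against arbitrary extensions, with the same norm bookkeeping. The support-locality point you single out as delicate is exactly the step the paper also relies on (asserting well-definedness from $ \mathrm{supp}\, \cross{N}{z} \subset \overline{\Gamma} $), so your proposal matches the paper's proof in both structure and level of detail.
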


\begin{proof}
Here we prove the first identity. The second one follows by the correspondence between vector fields and $ 1 $-forms.

Let $ l : TH(\Gamma) \rightarrow \mathbb{C} $ be a bounded linear functional, we can construct another functional $ \tilde{l} : TH(\bou) \rightarrow \mathbb{C} $ defined by $ \tilde{l}(w) = l(w|_\Gamma) $, for any $ w \in TH(\bou) $. Since $ \tilde{l} $ is linear, bounded and
\[ \norm{\tilde{l}}{}{(TH(\bou))^*} \leq \norm{l}{}{(TH(\Gamma))^*}, \]
there exists $ z \in TH(\bou) $ such that $ \dual{\cross{N}{z}}{w} = \tilde{l}(w) = l(w|_\Gamma) $ with
\[ \norm{z}{}{TH(\bou)} \leq \norm{l}{}{(TH(\Gamma))^*}. \]
It is clear that $ \mathrm{supp}\, \cross{N}{z} \subset \overline{\Gamma} $, hence $ z \in TH_0(\Gamma) $ and
\[ \norm{z}{}{TH_0(\Gamma)} \leq \norm{l}{}{(TH(\Gamma))^*}. \]

Conversely, given $ z \in TH_0(\Gamma) $ we can define $ l : TH(\Gamma) \rightarrow \mathbb{C} $ as $ l(w) = \dual{\cross{N}{z}}{\tilde{w}} $, for any $ w \in TH(\Gamma) $ and $ \tilde{w} \in TH(\bou) $ such that $ \tilde{w}|_\Gamma = w $. It is well-defined since $ \mathrm{supp}\, \cross{N}{z} \subset \overline{\Gamma} $.
Moreover,
\[ |l(w)| \leq \norm{z}{}{TH_0(\Gamma)} \norm{\tilde{w}}{}{TH(\bou)}, \]
which implies
\[ |l(w)| \leq \norm{z}{}{TH_0(\Gamma)} \norm{w}{}{TH(\Gamma)}. \]
Therefore, $ l $ is a bounded linear operator with norm
\[ \norm{l}{}{(TH(\Gamma))^*} \leq \norm{z}{}{TH_0(\Gamma)}. \]
\end{proof}

\begin{lemma}\rm
There exists a positive constant $ C $ such that:
\begin{itemize}
\item[(a)] For any $ w \in TH(\bou) $ and any $ f \in C^{0,1}(\bou) $, one has that
\begin{equation}\label{es:TH-product}
\norm{fw}{}{TH(\bou)} \leq C \norm{f}{}{C^{0,1}(\bou)} \norm{w}{}{TH(\bou)}.
\end{equation}
\item[(b)] For any $ w \in TH_0(\bou) $ and any $ f : \bou \longrightarrow \mathbb{C} $ such that $ f \in C^{0,1}(\overline{\Gamma}) $, one has that
\begin{equation}\label{es:TH-product_0}
\norm{fw}{}{TH_0(\Gamma)} \leq C \norm{f}{}{C^{0,1}(\overline{\Gamma})} \norm{w}{}{TH_0(\Gamma)}.
\end{equation}
\end{itemize}
\end{lemma}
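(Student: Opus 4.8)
The plan is to avoid working directly with the surface norms in \ref{es:equivalent_norm_TH-C1}--\ref{es:equivalent_norm_TH-C2} and instead to transfer the estimate to the interior space $\Hcurl{}$, where the Leibniz rule for the curl keeps everything in $L^2$. The reason is that the natural product rule on the boundary, $\Div(fw) = f\,\Div\, w + \langle \nabla_\Gamma f, w\rangle$, produces the tangential gradient $\nabla_\Gamma f$, which for $f \in C^{0,1}$ is merely bounded; multiplying a field $w \in B^{-1/2}(\bou)$ by an $L^\infty$ (and not H\"older) factor and landing back in $B^{-1/2}$ cannot be justified by Besov duality together with Lemma \ref{le:besov_product}. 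Lifting to the interior removes this obstruction, since there $\cross{\nabla F}{u}$ is simply an $L^\infty$ field times an $L^2$ field.

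Recall that $TH(\bou)$ is the space of tangential traces $\cross{N}{u}|_{\bou}$ of fields $u \in \Hcurl{}$; by \ref{es:equivalent_norm_TH-C1}--\ref{es:equivalent_norm_TH-C2} (see also \cite{M}, \cite{C}) the tangential trace map is bounded from $\Hcurl{}$ onto $TH(\bou)$ and admits a bounded right inverse. To prove (a), fix $w \in TH(\bou)$ and choose $u \in \Hcurl{}$ with $\cross{N}{u}|_{\bou} = w$ and $\norm{u}{}{\Hcurl{}} \leq C\,\norm{w}{}{TH(\bou)}$. Using a Stein-type extension (\cite{St}), extend $f$ to $F \in C^{0,1}(\overline{\inte})$ with $F|_{\bou} = f$ and $\norm{F}{}{C^{0,1}(\overline{\inte})} \leq C\,\norm{f}{}{C^{0,1}(\bou)}$. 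The scalar Leibniz rule gives $\curl(Fu) = F\,\curl u + \cross{\nabla F}{u}$, and since $F$ and $\nabla F$ are bounded while $u, \curl u \in L^2$, we get $Fu \in \Hcurl{}$ with $\norm{Fu}{}{\Hcurl{}} \leq C\,\norm{F}{}{C^{0,1}(\overline{\inte})}\,\norm{u}{}{\Hcurl{}}$. Since $F$ is continuous up to $\bou$, approximating $u$ by smooth fields shows $\cross{N}{(Fu)}|_{\bou} = F|_{\bou}\,(\cross{N}{u}|_{\bou}) = fw$; in particular $fw \in TH(\bou)$, and boundedness of the trace map yields
\[ \norm{fw}{}{TH(\bou)} \leq \norm{Fu}{}{\Hcurl{}} \leq C\,\norm{F}{}{C^{0,1}(\overline{\inte})}\,\norm{u}{}{\Hcurl{}} \leq C\,\norm{f}{}{C^{0,1}(\bou)}\,\norm{w}{}{TH(\bou)}, \]
which is (a).

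For (b), let $w$ have $\mathrm{supp}\, w \subset \overline{\Gamma}$ and $f \in C^{0,1}(\overline{\Gamma})$. First extend $f$ to $\tilde{f} \in C^{0,1}(\bou)$ with $\norm{\tilde{f}}{}{C^{0,1}(\bou)} \leq C\,\norm{f}{}{C^{0,1}(\overline{\Gamma})}$ (again by \cite{St}). Because $\mathrm{supp}\, w \subset \overline{\Gamma}$ we have $fw = \tilde{f}w$ and $\mathrm{supp}(fw) \subset \mathrm{supp}\, w \subset \overline{\Gamma}$, so $fw \in TH_0(\Gamma)$ with $\norm{fw}{}{TH_0(\Gamma)} = \norm{\tilde{f}w}{}{TH(\bou)}$. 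Applying (a) to $\tilde{f}w$ and using $\norm{w}{}{TH_0(\Gamma)} = \norm{w}{}{TH(\bou)}$ gives the claim.

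The step I expect to be the main obstacle is the justification of the equivalence between the $\Hcurl{}$ picture and the $TH(\bou)$ picture: namely the boundedness and surjectivity (with bounded lifting) of the tangential trace map, together with the compatibility $\cross{N}{(Fu)}|_{\bou} = f\,(\cross{N}{u}|_{\bou})$ for Lipschitz $F$. These are exactly the points where the Besov trace theory enters, and they are what the equivalent norms \ref{es:equivalent_norm_TH-C1}--\ref{es:equivalent_norm_TH-C2} and the references \cite{M}, \cite{C} are meant to supply; once they are in hand, the remainder is the elementary interior Leibniz estimate.
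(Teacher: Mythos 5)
Your proposal is correct, but it is worth recording how it sits relative to the paper. For part (b) you do exactly what the paper does: extend $f|_{\overline{\Gamma}}$ to $\tilde f \in C^{0,1}(\bou)$ with controlled norm via \cite{St}, observe that $fw=\tilde f w$ and that the support is preserved, and reduce to (a). For part (a), however, the paper gives no argument at all --- it simply cites \cite{C} --- whereas you supply a genuine proof: lift $w$ to $u\in\Hcurl{}$ with $\norm{u}{}{\Hcurl{}}\leq C\norm{w}{}{TH(\bou)}$, extend $f$ to a Lipschitz $F$ on $\overline{\inte}$, and use the Leibniz identity $\curl(Fu)=F\,\curl u+\cross{\nabla F}{u}$ to see that $Fu\in\Hcurl{}$ with the right bound, so that its tangential trace $fw$ lies in $TH(\bou)$. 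This is the natural argument, and your opening remark correctly identifies why the ``obvious'' route through the equivalent norms (\ref{es:equivalent_norm_TH-C1})--(\ref{es:equivalent_norm_TH-C2}) is problematic: the surface Leibniz rule produces $\inner{\nabla_\Gamma f}{w}$ with $\nabla_\Gamma f$ merely in $L^\infty$, and $L^\infty\cdot B^{-1/2}$ does not land in $B^{-1/2}$. The one step you gloss over is the identification $\cross{N}{(Fu)}|_{\bou}=f\,(\cross{N}{u}|_{\bou})$ ``by approximating $u$ by smooth fields'': to pass to the limit on the right-hand side you need multiplication by $f\in C^{0,1}(\bou)$ to be continuous on $B^{-1/2}(\bou;\mathbb{C}^3)$, which follows by duality from Lemma \ref{le:besov_product}(a) with $s=1/2$; alternatively one can verify the identity directly from the weak definition of the tangential trace, using $\curl(\overline{F}v)=\overline{F}\curl v+\cross{\nabla\overline{F}}{v}$ for the $H^1$ test field $v$ and the antisymmetry of the triple product, which makes the interior terms cancel. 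With that point made explicit the argument is complete.
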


\begin{proof}
(a) was proven in \cite{C}. (b) follows easily from (a) taking and extension $ \tilde{f} $ of $ f|_{\overline{\Gamma}} $ such that $ \tilde{f} \in C^{0,1}(\bou) $ and satisfying
\[ \norm{\tilde{f}}{}{C^{0,1}(\bou)} \leq C \norm{f}{}{C^{0,1}(\overline{\Gamma})}. \]
Again, regarding to extensions from a closed subset of $ \mathbb{E} $, see \cite{St}.
\end{proof}

\begin{lemma}\rm
The following items hold:
\begin{itemize}
\item[(a)] If $ w \in TH_0(\Gamma) $, then $ \Div\, w \in B^{-1/2}_0(\Gamma) $ and
\begin{equation}\label{es:Div-boun-TH0}
\norm{\Div\, w}{}{B^{-1/2}_0(\Gamma)} 
\leq C \norm{w}{}{TH_0(\Gamma)};
\end{equation}
\item[(b)]If $ z \in TH(\bou) $, then, for any $ f \in B^{1/2}_0(\Gamma) $ and $ \tilde{z} \in TH(\bou) $ such that $ \tilde{z}|_\Gamma = z|_\Gamma $, one has $ \dual{(\Div\, z)|_\Gamma}{f} = \dual{\Div\, \tilde{z}}{f} $ and
\begin{equation}\label{es:Div-boun-THGamma}
\norm{(\Div\, z)|_\Gamma}{}{B^{-1/2}(\Gamma)} 
\leq C \norm{z|_\Gamma}{}{TH(\Gamma)}.
\end{equation}
\end{itemize}
\end{lemma}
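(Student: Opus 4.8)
The plan is to derive, once and for all, a surface integration-by-parts identity for $ \Div $ and then to read off both items from it by tracking supports.

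\emph{A key identity.} First I would fix $ w = \cross{N}{u} \in TH(\bou) $ with $ u \in \Hcurl{} $, together with $ g \in B^{1/2}(\bou) $ and an extension $ F \in H^1(\inte) $, $ F|_{\bou} = g $. Combining the definition $ \Div\, w = -\inner{N}{(\curl u)} $ with (\ref{def:normal-component}) applied to $ \curl u $ (note $ \diver \curl u = 0 $), and then with (\ref{for:duality_TH-TH*}) applied to the pair $ u, \nabla F $ (note $ \curl \nabla F = 0 $), all the volume integrals cancel and one is left with
\[ \dual{\Div\, w}{g} = \dual{w}{\cross{N}{(\cross{N}{\nabla F})}}. \]
Since $ \cross{N}{(\cross{N}{V})} = -V_{\tan} $ for any field $ V $, the tangential field $ \tau_g := \cross{N}{\nabla F}|_{\bou} \in TH(\bou) $ depends only on $ g $ (it is the rotated surface gradient of $ g $) and is supported in $ \mathrm{supp}\, g $. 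The content of the identity is that it turns the action of $ \Div\, w $ on a scalar onto the action of $ w $ against a tangential field whose support is that of $ g $.

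\emph{Item (a).} The estimate is immediate: by (\ref{es:equivalent_norm_TH-C1}), $ \norm{\Div\, w}{}{B^{-1/2}(\bou)} \leq C_1 \norm{w}{}{TH(\bou)} = C_1 \norm{w}{}{TH_0(\Gamma)} $. It remains to see $ \mathrm{supp}(\Div\, w) \subset \overline{\Gamma} $, for then $ \Div\, w \in B^{-1/2}_0(\Gamma) $ with equal norm. Given $ g \in B^{1/2}(\bou) $ vanishing near $ \overline{\Gamma} $, the identity above gives $ \dual{\Div\, w}{g} = \dual{w}{\cross{N}{\tau_g}} $; as $ w $ is supported in $ \overline{\Gamma} $ while $ \tau_g $ is supported in $ \mathrm{supp}\, g $, disjoint from $ \overline{\Gamma} $, this pairing vanishes. (Equivalently, $ \Div $ is a first-order differential operator on $ \bou $ and does not enlarge supports.)

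\emph{Item (b).} By construction $ (\Div\, z)|_\Gamma $ is the restriction of the functional $ \Div\, z \in (B^{1/2}(\bou))^* $ to the subspace $ B^{1/2}_0(\Gamma) $, so $ \dual{(\Div\, z)|_\Gamma}{f} = \dual{\Div\, z}{f} $ for $ f \in B^{1/2}_0(\Gamma) $. Writing $ w := z - \tilde{z} $, which satisfies $ w|_\Gamma = 0 $, the claimed identity $ \dual{(\Div\, z)|_\Gamma}{f} = \dual{\Div\, \tilde{z}}{f} $ reduces to $ \dual{\Div\, w}{f} = 0 $. Here is the crux: by the key identity, $ \dual{\Div\, w}{f} = \dual{w}{\cross{N}{\tau_f}} $, and since $ \mathrm{supp}\, f \subset \overline{\Gamma} $ we have $ \tau_f \in TH_0(\Gamma) $, whence $ \cross{N}{\tau_f} \in \cross{N}{TH_0(\Gamma)} = (TH(\Gamma))^* $ by the identification established above. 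As $ w|_\Gamma = 0 $, that same identification shows $ w $ pairs to zero against every element of $ (TH(\Gamma))^* $, so $ \dual{\Div\, w}{f} = 0 $. I expect this to be the main obstacle, since it demands checking that $ \tau_f $ is genuinely supported in $ \overline{\Gamma} $ (so that $ \tau_f \in TH_0(\Gamma) $) and that the interior pairing does not depend on the chosen extension $ F $. The estimate (\ref{es:Div-boun-THGamma}) then follows by duality: for any extension $ \tilde{z} $ of $ z|_\Gamma $,
\[ |\dual{(\Div\, z)|_\Gamma}{f}| = |\dual{\Div\, \tilde{z}}{f}| \leq C_1 \norm{\tilde{z}}{}{TH(\bou)}\, \norm{f}{}{B^{1/2}_0(\Gamma)}, \]
using (\ref{es:equivalent_norm_TH-C1}) and $ \norm{f}{}{B^{1/2}_0(\Gamma)} = \norm{f}{}{B^{1/2}(\bou)} $; taking the supremum over $ \norm{f}{}{B^{1/2}_0(\Gamma)} = 1 $ and then the infimum over all such $ \tilde{z} $ yields $ \norm{(\Div\, z)|_\Gamma}{}{B^{-1/2}(\Gamma)} \leq C_1 \norm{z|_\Gamma}{}{TH(\Gamma)} $.
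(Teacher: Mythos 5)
Your proof is correct and follows essentially the same route as the paper: both hinge on the identity $\dual{\Div\, w}{g} = \dual{w}{\cross{N}{(\cross{N}{\nabla F})}}$ together with the fact that $\cross{N}{\nabla F}$ is supported in $\mathrm{supp}\, F|_{\bou}$, using it to track supports for (a) and to establish independence of the extension for (b), with the norm bounds then read off from (\ref{es:equivalent_norm_TH-C1}) by duality. You merely spell out the derivation of the identity and the $(TH(\Gamma))^*$ bookkeeping that the paper leaves implicit.
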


\begin{proof}
It is easy to check both items.
\begin{itemize}
\item[(a)] $ \Div\, w $ is well-defined and belongs to $ B^{-1/2}(\bou) $. It remains to prove that $ \mathrm{supp}\, \Div\, w \subset \overline{\Gamma} $. In order to verify this last point, we just need to have in mind the following facts: if $ f \in H^1(\inte) $, then $ \cross{N}{\nabla f} \in TH(\bou) $; moreover $ \mathrm{supp}\, \cross{N}{\nabla f} \subset \mathrm{supp}\, f|_{\bou} $. 
Indeed, $ \mathrm{supp}\, \Div\, w \subset \overline{\Gamma} $ since
\[ \dual{\Div\, w}{f|_{\bou}} = \dual{w}{\cross{N}{(\cross{N}{\nabla f})}}. \]
The estimate is now immediate using either (\ref{es:equivalent_norm_TH-C1}) or (\ref{es:equivalent_norm_TH-C1_forms}).
\item[(b)] By an analogous argument to the one given in (a), we have that if $ \tilde{z}|_\Gamma = z|_\Gamma $ then $ (\Div\, \tilde{z})|_\Gamma = (\Div\, z)|_\Gamma $. Hence the identity follows. The estimate is a consequence of the identity and (\ref{es:equivalent_norm_TH-C1}) or (\ref{es:equivalent_norm_TH-C1_forms}).
\end{itemize}
\end{proof}

\subsection{Maxwell's system as a Sch\"odinger equation}\label{sec:Me-Se}
In this section we shall transform Maxwell's equations into a Schr\"odinger-type equation. The idea of this transformation was already introduced in \cite{OS}.

There is a well known process which allows us to transform Maxwell's equations into a Schr\"odinger-type equation. In order to do so we require some extra smoothness of the coefficients, namely $ \mu, \gamma \in C^{1,1}(\overline{\inte}) $. The first step in this process  is to augment the Maxwell system with two scalar equations:
\begin{equation*}
\delta(\gamma E)=0, \qquad \delta(\mu H)=0.
\end{equation*}
The information coded in these scalar equations was already present in the initial system. In order to check this, it is enough to take $ \ast d $ in each equation in (\ref{ME_without_sources}).

Next, we introduce a new system inspired in the four mentioned equations. This new system reads as
\begin{equation}\label{eq:augmented_sys}
\left\{ 
\begin{array}{l}
\delta(\gamma E) + i \omega \gamma \mu h =0\\
- \gamma^{-1} d(\gamma e) + \ast d E - i \omega \mu H = 0\\
\delta(\mu H) + i \omega \gamma \mu e =0\\
\mu^{-1} d(\mu h) + \ast d H + i \omega \gamma E = 0.\\
\end{array}
\right.
\end{equation}
The new terms preserve the physical units of measure of the original four equations. Choosing euclidean coordinates, the new system --called henceforth \emph{augmented system}-- can be written in vector field notation as it follows
\begin{equation*}
\left[
\left(\begin{array}{c c | c c}
 &  &  & \Diver\\
 &  & D & - \Durl\\
\hline
 & \Diver &  & \\
D & \Durl &  & \\
\end{array}\right)
+\left(\begin{array}{c c | c c}
\omega \mu &  &  & D \alpha \cdot\\
 & \omega \mu I_3 & D \alpha & \\
\hline
 & D \beta \cdot & \omega \gamma & \\
D \beta & & & \omega \gamma I_3\\
\end{array}\right)
\right]\frv{h}{H}{e}{E}=0,
\end{equation*}
where $ \alpha = \mathrm{log} \, \gamma $, $ \beta = \mathrm{log} \, \mu $, $ I_j $ is $ ( j \times j ) $-identity matrix, with $ j \in \mathbb{N} $  and
\begin{gather*}
\Diver = \frac{1}{i}\left(\begin{array}{c c c}
\partial_{x^1} & \partial_{x^2} & \partial_{x^3}
\end{array}\right),\\
D = \frac{1}{i} \left( \begin{array}{c}
\partial_{x^1}\\
\partial_{x^2}\\
\partial_{x^3}
\end{array}\right),\quad
\Durl = \frac{1}{i}\left(\begin{array}{c c c}
 & -\partial_{x^3} & \partial_{x^2} \\
\partial_{x^3} &  & -\partial_{x^1} \\
-\partial_{x^2} & \partial_{x^1} & \\
\end{array}\right).
\end{gather*}
In a much more compact manner we shall express the augmented system as $ ( P + V ) X = 0 $, where
\[ P = \left(\begin{array}{c c | c c}
 &  &  & \Diver\\
 &  & D & - \Durl\\
\hline
 & \Diver &  & \\
D & \Durl &  & \\
\end{array}\right),
\qquad
V = \left(\begin{array}{c c | c c}
\omega \mu &  &  & D \alpha \cdot\\
 & \omega \mu I_3 & D \alpha & \\
\hline
 & D \beta \cdot & \omega \gamma & \\
D \beta & & & \omega \gamma I_3\\
\end{array}\right). \]

Note that $ E, H $ is a solution for Maxwell's equations, if and only if, $ X^t = \fcv{h}{H^t}{e}{E^t} $ is a solution for the augmented system and the scalar fields $ e, h $ vanish.

The next step is to rescale the augmented system, that is
\[ ( P + V ) \, 
\left( \begin{array}{c | c}
\mu^{-1/2} I_4 & \\
\hline
 & \gamma^{-1/2} I_4 \\
\end{array} \right)Y
 = \left( \begin{array}{c | c}
\gamma^{-1/2} I_4 & \\
\hline
 & \mu^{-1/2} I_4 \\
\end{array} \right) \, ( P + W )Y, \]
where
\begin{equation}\label{term:W-def}
W = \kappa I_8 + \frac{1}{2}
\left(\begin{array}{c c|c c}
 &  &  & D \alpha \cdot \\
 &  & D \alpha & D \alpha \times \\
\hline
 & D \beta \cdot &  & \\
D \beta & - D \beta \times &  & \\
\end{array}\right),
\end{equation}
with $ \kappa = \omega \mu^{1/2} \gamma^{1/2}$. We shall call
\[ ( P + W ) Y = 0 \]
the \emph{rescaled system}.

The advantage of rescaling is that
\begin{align}
0 &= ( P + W )( P - W^t ) Z = (-\Delta I_8 + Q) Z, \label{eq:schr1}\\
0 &= ( P - W^t )( P + W ) Z' = (-\Delta I_8 + Q') Z', \label{eq:schr2}\\
0 &= ( P + W^* )( P - \overline{W} ) \hat Z = (-\Delta I_8 + \hat Q) \hat Z, \label{eq:schr3}
\end{align}
where $ Q, Q', \hat Q $ are zeroth-order terms. Here $ W^t $ denotes the transposed of $ W $ and $ W^\ast $ stands for $ \overline{W^t} $. No first order terms appear in (\ref{eq:schr1}), (\ref{eq:schr2}) and (\ref{eq:schr3}), giving as a result a Schr\"odinger-type equation.
Mind
\begin{equation}
Q = -PW^t + WP - WW^t\label{def:matrixQ-0th}.
\end{equation}
Note that if $ Z $ is a solution for (\ref{eq:schr1}) in $ \inte $, then $ Y = ( P - W^t ) Z $ is a solution for the rescaled system in $ \inte $, hence
\[ X = \left( \begin{array}{c | c}
\mu^{-1/2} I_4 &  \\
\hline
 & \gamma^{-1/2} I_4 \\
\end{array} \right) Y \]
is a solution for the augmented system. In the same manner, if $ \hat Z $ is a solution for (\ref{eq:schr3}), then $ \hat Y = (P - \overline{W})\hat Z $ is a solution for $ (P + W^*) \hat Y = 0 $ in $ \inte $.

For later uses,
\begin{gather}
Q = \frac{1}{2} \fbM{\subM{\Delta \alpha}{}{}{2 \nabla^2 \alpha - \Delta \alpha I_3}}{}
{}{\subM{\Delta \beta}{}{}{2 \nabla^2 \beta - \Delta \beta I_3}} + \nonumber
\end{gather}
\begin{gather}
\label{ter:matrixQ-0th}
- \fbM{(\kappa^2 + \frac{1}{4} (\inner{D\alpha}{D\alpha})) I_4}{\subM{}{2 D \kappa \cdot}{2 D \kappa}{}}
{\subM{}{2 D \kappa \cdot}{2 D \kappa}{}}{(\kappa^2 + \frac{1}{4} (\inner{D\beta}{D\beta})) I_4},
\end{gather}
\begin{gather}
Q' = - \frac{1}{2} \fbM{\subM{\Delta \beta}{}{}{2 \nabla^2 \beta - \Delta \beta I_3}}{}
{}{\subM{\Delta \alpha}{}{}{2 \nabla^2 \alpha - \Delta \alpha I_3}}\nonumber \\
\label{ter:matrixQ'-0th}
- \fbM{(\kappa^2 + \frac{1}{4} (\inner{D\beta}{D\beta})) I_4}{\subM{}{}{}{2D \kappa \times}}
{\subM{}{}{}{-2 D \kappa \times}}{(\kappa^2 + \frac{1}{4} (\inner{D\alpha}{D\alpha})) I_4}
\end{gather}
and
\begin{gather}
\hat Q = \frac{1}{2} \fbM{\subM{- \Delta \beta}{}{}{- 2 \nabla^2 \beta + \Delta \beta I_3}}{}
{}{\subM{- \Delta \overline{\alpha}}{}{}{- 2 \nabla^2 \overline{\alpha} + \Delta \overline{\alpha} I_3}}\nonumber \\
\label{ter:matrixQ^-0th}
- \fbM{(\overline{\kappa}^2 - \frac{1}{4} (\inner{D\beta}{D\beta})) I_4}{\subM{}{}{}{-2 D \overline{\kappa} \times}}
{\subM{}{}{}{2 D \overline{\kappa} \times}}{(\overline{\kappa}^2 - \frac{1}{4} (\inner{D \overline{\alpha}}{D \overline{\alpha}})) I_4}
\end{gather}
with $\nabla^2 f=(\partial^2_{x_j, x_k} f)^3_{j,k=1}$.

In order to make as concise as possible the presentation of our proofs, we introduce some additional notation. Let $ Y, Z $ be in the form
\[ Y = \fcv{f^1}{(u^1)^t}{f^2}{(u^2)^t}^t, \qquad Z = \fcv{g^1}{(v^1)^t}{g^2}{(v^2)^t}^t, \]
define
\begin{equation*}
\Inner{Y}{Z} = \sum_{j=1}^{2} \left( \int_{\inte} f^j \overline{g^j} \, dV + \int_{\inte} \inner{u^j}{\overline{v^j}} \, dV \right),
\end{equation*}
\begin{equation*}
\Inner{Y}{Z}_{\bou} =\sum_{j=1}^{2} \left( \int_{\bou} f^j \overline{g^j} \, dA + \int_{\bou} \inner{u^j}{\overline{v^j}} \, dA \right).
\end{equation*}
In the first identity we are assuming $ f^j, g^j \in C^\infty(\overline{\inte}) $ and $ u^j, v^j \in \mathcal{X}\mathbb{E}|_{\overline{\inte}} $ with $ j = 1, 2 $, while in the second identity $ f^j, g^j \in C^\infty(\bou) $ and $ u^j, v^j \in \mathcal{X}\mathbb{E}|_{\bou} $ with $ j = 1, 2 $.
The following integration by parts holds
\[ \Inner{PY}{Z} = \Inner{P_N Y}{Z|_{\bou}}_{\bou} + \Inner{Y}{PZ}. \]
Here, when $ A $ is a (possibly complex) vector field we denote
\begin{equation}\label{for:P_A}
P_A = \frac{1}{i} \left( \begin{array}{c c|c c}
 &  &  & A \cdot \\
 &  & A & - A \times \\
\hline
 & A \cdot &  & \\
A & A \times &  & \\
\end{array}\right).
\end{equation}

Finally, for elements $ Y $ in the form given above we define, for $ |s| > 0 $,
\[ \norm{Y}{}{H^s(\inte; \mathcal{Y})} = \sum_{j = 1, 2} \left( \norm{f^j}{}{H^s(\inte)} + \norm{u^j}{}{H^s(\inte; \mathbb{C}^3)} \right), \]
and
\[ \norm{Y}{}{L^2(\inte; \mathcal{Y})} = \sum_{j = 1, 2} \left( \norm{f^j}{}{L^2(\inte)} + \norm{u^j}{}{L^2(\inte; \mathbb{C}^3)} \right). \]
On the other hand, we define, for $ 0 < |s| < 1 $,
\[ \norm{Y}{}{B^s(\bou; \mathcal{Y})} = \sum_{j = 1, 2} \left( \norm{f^j}{}{B^s(\bou)} + \norm{u^j}{}{B^s(\bou; \mathbb{C}^3)} \right), \]
and
\[ \norm{Y}{}{L^2(\bou; \mathcal{Y})} = \sum_{j = 1, 2} \left( \norm{f^j}{}{L^2(\bou)} + \norm{u^j}{}{L^2(\bou; \mathbb{C}^3)} \right). \]

\subsection{About the geometry of $ U $}\label{sec:geometryU}
In order to make precise the geometrical restrictions assumed in Theorem \ref{th:stability_loc}, Corollary \ref{col:stability_loc} and Theorem \ref{th:uniqueness_loc}, we give the following definitions.

\begin{definition}\label{def:geometryU}\rm
We shall say that a bounded Lipschitz domain $ U \subset \mathbb{E} $ is \emph{partially flat} if there exists a plane $ q \subset \mathbb{E} $ and some euclidean coordinates $ \mathcal{E} $ such that,
\begin{itemize}
\item[(i)] $ q = \{ x \in \mathbb{R}^3 : x^3 = 0 \} $,
\item[(ii)] $ U \subset \{ x \in \mathbb{R}^3 : x^3 < 0 \} $,
\item[(iii)] $ \Gamma_0 := \mathrm{int}_q ( \bouU \cap q ) \neq \emptyset $.
\end{itemize}

We shall say that a bounded Lipschitz domain $ U \subset \mathbb{E} $ is \emph{partially spherical} if there exist a point $ Q_0 \in \mathbb{E} $, $ r_0 > 0 $ and some euclidean coordinates $ \mathcal{E} $ such that
\begin{itemize}
\item[(i)] $ Q_0 = y_0 $ and $ U \subset B(y_0; r_0) := \{ y \in \mathbb{R}^3 : |y - y_0| < r_0 \} $,
\item[(ii)] $ \Gamma_0 := \mathrm{int}_{S(y_0; r_0)} (\bouU \cap S(y_0; r_0)) \neq \emptyset $ where $ S(y_0; r_0) := \partial B(y_0; r_0) $,
\item[(iii)] $ 0 \in S(y_0; r_0) $ but $ 0 \notin \overline{U} $.
\end{itemize}
In the two previous cases, we denote $ \Gamma := \bouU \setminus \overline{\Gamma_0} $.
\end{definition}

\begin{definition}\label{def:suitableU}\rm
We shall say that a partially flat domain $ U $ is \emph{suitable} if its \emph{symmetric extension with respect to $ q $} --that is $ \inte := U \cup \Gamma_0 \cup \mathcal{R}(U) $-- is also Lipschitz. Here $ \mathcal{R} $ denotes the reflection with respect to $ q $ and it is defined as $ (x^1, x^2, x^3) \longmapsto (x^1, x^2, -x^3) $.

In addition, we shall say that a partially spherical domain $ U $ is \emph{suitable} if its \emph{inversion with respect to $ S(0; 2r_0) $} --that is $ \inte := \mathcal{K}(U) $-- is a suitable partially flat domain. Here $ \mathcal{K} $ denotes the inversion with respect to $ S(0; 2r_0) $ and it is defined as $ y \longmapsto r^2_1/|y|^2 y $ with $ r_1 = 2 r_0 $.
\end{definition}

We have to restrict ourselves to these suitable domains because we need to make an extension of the coefficients preserving their smoothness (see Subsection \ref{sec:constructing_sol}).

\section{The domain $ U $ is partially flat}
Along this section we assume $ U $ to be a suitable partially flat domain and we follow the notation in Definition \ref{def:geometryU} and Definition \ref{def:suitableU}.

\subsection{Maxwell's system and the reflection map}\label{sec:reflec_map}
Let the coefficients $ \mu, \gamma $ be such that $ \mu, \gamma \in C^{1,1}(\overline{U}) $ with $ \partial_{x^3} \mu |_{\Gamma_0}= \partial_{x^3} \gamma |_{\Gamma_0} = 0 $ and set $ \tilde{\mu}, \tilde{\gamma} : \overline{\inte} \longrightarrow \mathbb{C} $ two smooth extensions of $ \mu $ and $ \gamma $ defined as
\[ \tilde{\mu}(x^1, x^2, x^3) = \mu(x^1, x^2, -|x^3|), \qquad \tilde{\gamma}(x^1, x^2, x^3) = \gamma(x^1, x^2, -|x^3|), \]
for any $ x \in \overline{\inte} $. Note that the hypothesis $ \partial_{x^3} \mu |_{\Gamma_0}= \partial_{x^3} \gamma |_{\Gamma_0} = 0 $ allows us to keep the smoothness when extending.

Consider the system
\begin{equation}\label{eq:ME_reflected_domain}
\left\{ 
\begin{array}{l}
\curl H + i \omega \tilde{\gamma} E = 0\\
\curl E - i \omega \tilde{\mu} H = 0\\
\end{array}
\right.
\end{equation}
in $ \inte $.
The push-forward of the reflection map $ \mathcal{R} $ reads
\begin{equation*}
\mathcal{R}_* =
\left(\begin{array}{c c c}
\, 1 &  & \\
 & \, 1 & \\
 &  & -1 \\
\end{array}\right).
\end{equation*}

Let $ f $ be a smooth function and $ u, u' $ two vector fields on $ \mathbb{E} $. Let $ g, v, v' $ denote the function and the vector fields given by
\[ g(x) := f(\mathcal{R}(x)), \qquad v_x := \mathcal{R}_* u_{\mathcal{R}(x)}, \qquad v'_x := \mathcal{R}_* u_{\mathcal{R}(x)}. \]
It is a straight forward computation to check that
\begin{align}
(\diver v)(x) &= (\diver u)(\mathcal{R}(x)), \label{id:reflection-diver} \\
(\nabla g)_x &= \mathcal{R}_* (\nabla f)_{\mathcal{R}(x)}, \label{id:reflection-grad} \\
(\curl v)_x &= - \mathcal{R}_* (\curl u)_{\mathcal{R}(x)}, \label{id:reflection-curl} \\
(\cross{v}{v'})_x &= - \mathcal{R}_*(\cross{u}{u'})_{\mathcal{R}(x)}. \label{id:reflection-cross}
\end{align}

On the other hand, let $ a $ be a smooth function defined in $ \{ x \in \mathbb{R}^3 : x^3 < 0 \} $ and set $ \tilde{a} $, the extension of $ a $ to $ \mathbb{E} $, defined as $ \tilde{a} (x^1, x^2, x^3) = a (x^1, x^2, -|x^3|) $. Then
\begin{equation}\label{id:reflection-coeffgrad}
(\nabla \tilde{a})_x = \mathcal{R}_* (\nabla \tilde{a})_{\mathcal{R}(x)}.
\end{equation}

\begin{lemma}\label{le:vanishing_bou_1}\rm
Given
\[ Y = \fcv{0}{\tilde{\mu}^{1/2} H^t}{0}{\tilde{\gamma}^{1/2} E^t}^t, \]
such that $ E, H \in \Hcurl{} $ is a solution of (\ref{eq:ME_reflected_domain}) in $ \inte $, one has that $ E - \dot{E}, H - \dot{H} $, with
\[ \dot{E}_x := \mathcal{R}_* E_{\mathcal{R}(x)}, \qquad \dot{H}_x := - \mathcal{R}_* H_{\mathcal{R}(x)}; \]
is also a solution of (\ref{eq:ME_reflected_domain}) in $ \inte $ satisfying
\begin{equation}\label{ter:vanishing_atboundary_1}
\cross{N}{(E - \dot{E})}|_{\Gamma_0} = 0.
\end{equation}
\end{lemma}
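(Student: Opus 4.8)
The plan is to observe that $(\dot E, \dot H)$ is essentially the pushforward of $(E, H)$ under the reflection $\mathcal{R}$, with a sign inserted in $\dot H$, and then to check the two assertions separately: that $(\dot E, \dot H)$ solves (\ref{eq:ME_reflected_domain}) in $\inte$, whence by linearity so does $(E - \dot E, H - \dot H)$; and that $E - \dot E$ is normal to $\Gamma_0$, so that its tangential trace there vanishes.

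For the first assertion I would compute $\curl \dot E$ from the reflection identity (\ref{id:reflection-curl}): since $\dot E_x = \mathcal{R}_* E_{\mathcal{R}(x)}$, that identity gives $(\curl \dot E)_x = -\mathcal{R}_*(\curl E)_{\mathcal{R}(x)}$. Substituting the second equation of (\ref{eq:ME_reflected_domain}) evaluated at $\mathcal{R}(x)$ turns this into $-i\omega\, \tilde\mu(\mathcal{R}(x))\, \mathcal{R}_* H_{\mathcal{R}(x)}$; the reflection symmetry $\tilde\mu\circ\mathcal{R} = \tilde\mu$ of the extended coefficient, immediate from its definition, together with $\dot H_x = -\mathcal{R}_* H_{\mathcal{R}(x)}$ collapses this to $i\omega\tilde\mu\,\dot H$, i.e.\ $\curl \dot E - i\omega\tilde\mu\dot H = 0$. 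The companion equation $\curl\dot H + i\omega\tilde\gamma\dot E = 0$ follows identically from the first equation of (\ref{eq:ME_reflected_domain}); the extra minus sign placed in the definition of $\dot H$ is exactly what is needed to absorb the sign that $\curl$ picks up under reflection in (\ref{id:reflection-curl}). That the difference fields remain in $\Hcurl{}$ is clear, since $\mathcal{R}$ is an isometric diffeomorphism of the symmetric domain $\inte$.

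For the second assertion I would use that $\mathcal{R}$ fixes $\Gamma_0 \subset \{x^3 = 0\}$ pointwise, so there $\dot E = \mathcal{R}_* E$ with $\mathcal{R}_* = \mathrm{diag}(1,1,-1)$. The tangential trace $\cross{N}{\cdot}$ on $\Gamma_0$ only depends on the components of the field tangent to $\{x^3 = 0\}$, and $\mathcal{R}_*$ acts as the identity on those; moreover the tangential trace of the globally defined field $E \in \Hcurl{}$ agrees when taken from the $U$-side and from the $\mathcal{R}(U)$-side of $\Gamma_0$. Combining these facts gives $\cross{N}{\dot E} = \cross{N}{E}$ on $\Gamma_0$, hence $\cross{N}{(E - \dot E)}|_{\Gamma_0} = 0$, which is (\ref{ter:vanishing_atboundary_1}).

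The delicate point is the rigor of this last step: the reflection identities and the pointwise picture are valid for smooth fields, but $E$ only lies in $\Hcurl{}$, so $\cross{N}{E}|_{\Gamma_0}$ exists merely as an element of a negative Besov trace space and carries no pointwise meaning. The argument is made precise by the continuity of the tangential trace operator together with the transmission property that a field belongs to $\Hcurl{}$ precisely when its restrictions to $U$ and $\mathcal{R}(U)$ have curl in $L^2$ and matching tangential traces across $\Gamma_0$ — this is what legitimizes identifying the two one-sided traces above. Alternatively one approximates $(E, H)$ by smooth solutions and passes to the limit in the trace topology.
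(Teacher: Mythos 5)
Your proof is correct, and the first half (that $\dot{E},\dot{H}$ solves (\ref{eq:ME_reflected_domain}), using (\ref{id:reflection-curl}) and the reflection symmetry $\tilde{\mu}\circ\mathcal{R}=\tilde{\mu}$, $\tilde{\gamma}\circ\mathcal{R}=\tilde{\gamma}$ of the extended coefficients) coincides with the paper's. For the boundary identity (\ref{ter:vanishing_atboundary_1}) you take a genuinely different route: you factor the claim into two ingredients, namely that $\mathcal{R}_*=\mathrm{diag}(1,1,-1)$ acts as the identity on the components entering $\cross{N}{\centerdot}$ on $\Gamma_0$, and the transmission property that a field in $\Hcurl{}$ has matching one-sided tangential traces across the interior interface $\Gamma_0$, so that $\cross{N}{\dot{E}}|_{\Gamma_0}=\cross{N}{E}|_{\Gamma_0}$. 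The paper instead never invokes the transmission property as a black box: it tests $\cross{N}{(E-\dot{E})}$ against an arbitrary $w\in B^{1/2}(\bouU;\mathbb{C}^3)$ supported in $\overline{\Gamma_0}$, uses the weak definition of the tangential trace on $U$, changes variables via $\mathcal{R}$ in the $\dot{E}$ terms, and folds the two half-domain integrals into a single integral over $\inte$ against a glued test field $u$ that vanishes on $\bou$, so the pairing is zero outright. In effect the paper's computation is a self-contained proof of exactly the instance of the transmission property you cite, which keeps the argument within the weak trace formalism already set up in the preliminaries; your version is shorter and more conceptual but shifts the burden onto the (standard, but unproved here) two-sided trace-matching statement for $\Hcurl{}$, and you correctly flag that this — or a density argument in the $\Hcurl{}$ norm, which is all that is really needed, rather than approximation by smooth \emph{solutions} — is the step that must be supplied to make the pointwise picture rigorous.
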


\begin{proof}
Let $ E, H $ be a solution of (\ref{eq:ME_reflected_domain}) in $ \inte $. It is an immediate consequence of (\ref{id:reflection-curl}) and the definition of $ \tilde{\mu}, \tilde{\gamma} $ in $ \inte $ that $ \dot{E}, \dot{H} $ is also a solution for (\ref{eq:ME_reflected_domain}) in $ \inte $. Further, from the weak definition of tangential trace one can derive that $ \cross{N}{(E - \dot{E})}|_{\Gamma_0} = 0 $. Indeed, let $ w \in B^{1/2}(\bouU; \mathbb{C}^3) $ such that $ \mathrm{supp}\, w \subset \overline{\Gamma_0} $ and consider $ v \in H^{1}(U; \mathbb{C}^3) $ such that $ v|_{\bouU} = w $, then
\begin{gather*}
\dual{\cross{N}{E}-\cross{N}{\dot E}}{w}_{\bouU} = \int_{U} \inner{(\curl{E} - \curl{\dot E})}{\overline{v}} \,dV - \int_{U} \inner{(E - \dot E)}{\overline{\curl v}} \,dV\\
= \int_{U} \inner{\curl{E}}{\overline{v}} - \inner{E}{\overline{\curl v}} \,dV
  + \int_{U} \inner{(\curl{E})_{\mathcal{R}}}{\overline{\mathcal{R}_* v}} + \inner{E_{\mathcal{R}}}{\overline{\mathcal{R}_* \curl v}} \,dV\\
= \int_{U} \inner{\curl{E}}{\overline{v}} - \inner{E}{\overline{\curl v}} \,dV
  + \int_{\mathcal{R}(U)} \inner{(\curl{E})}{\overline{\mathcal{R}_* v_{\mathcal{R}}}} - \inner{E}{\overline{(\curl \mathcal{R}_* v_{\mathcal{R}})}} \,dV\\
= \int_{\inte} \inner{\curl{E}}{\overline{u}} - \inner{E}{\overline{\curl u}} \,dV = \dual{\cross{N}{E}}{u}_{\bou} = 0.
\end{gather*}
Here we have used (\ref{id:reflection-curl}) twice, and the fact that $ u $, defined as $ v $ in $ U $ and as $ \mathcal{R}_* v_{\mathcal{R}} $ in $ \mathcal{R}(U) $, belongs to $ H^1(\inte; \mathbb{C}^3) $ and $ u|_{\bou} = 0 $.
\end{proof}

\begin{lemma}\label{le:vanishing_bou_2}\rm
Given
\[ Y = \fcv{f^1}{u^1}{f^2}{u^2} \]
solution of $ (P + W^*)Y = 0 $ in $ \inte $ with $ f^j \in H^1(\inte) $ and $ u^j \in \Hcurl{} $, one has that $ Y - \dot{Y} $ is also a solution of $ (P + W^*) (Y - \dot{Y}) = 0 $ in $ \inte $. Here $ W $ denotes the matrix (\ref{term:W-def}) for coefficients $ \tilde{\mu}, \tilde{\gamma} $ and $ \dot{Y}_x := \dot{J} Y_{\mathcal{R}(x)} $
with
\[Y_x = \fcv{f^1(x)}{u^1_x}{f^2(x)}{u^2_x} \qquad \dot{J} := \left( \begin{array}{c c| c c}
1 &  &  &  \\
 & -\mathcal{R}_* &  &  \\
\hline
 &  & -1 &  \\
  &  &  & \mathcal{R}_* 
\end{array} \right).\]
Additionally,
\begin{equation}\label{ter:vanishing_atboundary_2}
(f^1 - \dot{f}^1)|_{\Gamma_0} = 0, \qquad \cross{N}{(u^2 - \dot{u}^2)}|_{\Gamma_0} = 0.
\end{equation}
\end{lemma}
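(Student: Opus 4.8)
The plan is to follow the scheme of Lemma \ref{le:vanishing_bou_1}: by linearity of $P + W^*$ it suffices to show that $\dot Y$ itself solves $(P + W^*)\dot Y = 0$ in $\inte$, and then to read off the two trace conditions on $\Gamma_0$ from the fact that $\mathcal{R}$ fixes the plane $\{x^3 = 0\}$ pointwise. Since $\mathcal{R}$ is a linear isometry and $\dot J$ is a constant matrix, $\dot Y$ inherits the regularity of $Y$, so $\dot f^j \in H^1(\inte)$ and $\dot u^j \in \Hcurl{}$. The heart of the matter is an intertwining identity for the substitution $\mathcal{S}: Y \mapsto \dot J\, Y\circ\mathcal{R}$, namely
\[ (P + W^*)(\mathcal{S}Y)_x = \dot J\,\bigl[(P + W^*)Y\bigr]_{\mathcal{R}(x)}, \]
so that $(P + W^*)Y = 0$ forces $(P + W^*)\dot Y = 0$. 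The matrix $\dot J = \mathrm{diag}(1, -\mathcal{R}_*, -1, \mathcal{R}_*)$ is chosen precisely so that the various reflection signs become consistent.

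First I would treat the first-order part $P$. Writing out the four block-rows of $PY$ in terms of $\Diver, D, \Durl$ and substituting the components of $\dot Y$ (namely $\dot f^1 = f^1\circ\mathcal{R}$, $\dot u^1_x = -\mathcal{R}_* u^1_{\mathcal{R}(x)}$, $\dot f^2 = -f^2\circ\mathcal{R}$, $\dot u^2_x = \mathcal{R}_* u^2_{\mathcal{R}(x)}$), I would apply the reflection identities (\ref{id:reflection-diver})--(\ref{id:reflection-curl}). The divergence identity carries no sign, the gradient identity produces a factor $\mathcal{R}_*$, and the curl identity produces $-\mathcal{R}_*$; combined with the signs built into $\dot J$, these collapse block by block to $P\dot Y_x = \dot J\,(PY)_{\mathcal{R}(x)}$.

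Next I would handle the zeroth-order part $W^*$. Since $W^*$ acts by pointwise multiplication, the required intertwining $W^*\dot Y_x = \dot J\,(W^* Y)_{\mathcal{R}(x)}$ is equivalent to the matrix identity $\dot J\, W^*(x)\,\dot J = W^*(\mathcal{R}(x))$ (note $\dot J^2 = I_8$). Because $\tilde\mu, \tilde\gamma$ are even under $\mathcal{R}$, the scalar $\kappa$ satisfies $\kappa\circ\mathcal{R} = \kappa$, so the $\overline{\kappa} I_8$ term is immediate; for the remaining blocks I would use that $D\alpha, D\beta$ transform like gradients, $(D\alpha)_{\mathcal{R}(x)} = \mathcal{R}_*(D\alpha)_x$ by (\ref{id:reflection-coeffgrad}), together with the fact that $\mathcal{R}_*$ is a real symmetric involution. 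The dot and column blocks then follow from $\inner{a}{(\mathcal{R}_* w)} = \inner{(\mathcal{R}_* a)}{w}$. The delicate blocks are the cross-product entries $D\alpha\times$ and $D\beta\times$: here one needs the operator identity $\mathcal{R}_*\,(a\times)\,\mathcal{R}_* = -(\mathcal{R}_* a)\times$, which is exactly the orientation-reversing ($\det\mathcal{R}_* = -1$) analogue of (\ref{id:reflection-cross}). This extra sign is what forces the off-diagonal sign pattern of $\dot J$, and checking these blocks is the main technical point of the lemma; everything else is a transcription of identities already recorded.

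Finally, for the trace conditions I would restrict to $\Gamma_0 \subset \{x^3 = 0\}$, where $\mathcal{R}(x) = x$. The scalar condition is immediate: $\dot f^1 = f^1\circ\mathcal{R}$ has the same trace as $f^1$ on $\Gamma_0$ since $\mathcal{R}$ fixes $\Gamma_0$, whence $(f^1 - \dot f^1)|_{\Gamma_0} = 0$. For the tangential condition I would argue precisely as in Lemma \ref{le:vanishing_bou_1}, since $\dot u^2_x = \mathcal{R}_* u^2_{\mathcal{R}(x)}$ is defined in exactly the same way as $\dot E$ there; the weak computation carried out there relies only on $u^2 \in \Hcurl{}$ and on (\ref{id:reflection-curl}), and reproduces $\cross{N}{(u^2 - \dot u^2)}|_{\Gamma_0} = 0$. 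Pointwise this is transparent: on $\Gamma_0$ one has $u^2 - \dot u^2 = (0,0,2u^2_3)$, which is normal to the plane, and $N$ is parallel to it. I expect the $W^*$-intertwining, in particular the cross-product blocks, to be the only step requiring genuine care.
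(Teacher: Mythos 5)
Your proposal is correct and follows the same route as the paper: the paper's proof consists precisely of the two intertwining identities $(P\dot Y)_x = \dot J (PY)_{\mathcal{R}(x)}$ and $(W^*\dot Y)_x = \dot J (W^* Y)_{\mathcal{R}(x)}$, derived from (\ref{id:reflection-diver})--(\ref{id:reflection-curl}), (\ref{id:reflection-coeffgrad}) and (\ref{id:reflection-cross}), with the scalar trace condition following from $f^1 \in H^1(\inte)$ and the tangential one from the weak argument of Lemma \ref{le:vanishing_bou_1}. You simply supply more of the block-by-block bookkeeping (in particular for the cross-product entries of $W^*$) that the paper leaves implicit.
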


\begin{proof}
The first part of the lemma follows from
\begin{equation}\label{id:P-refle}
(P \dot{Y})_x = \dot{J} (P Y)_{\mathcal{R}(x)}
\end{equation}
and
\begin{equation}\label{id:W*-refle}
(W^* \dot{Y})_x = \dot{J}(W^* Y)_{\mathcal{R}(x)}.
\end{equation}
The identity (\ref{id:P-refle}) is a consequence of (\ref{id:reflection-diver}), (\ref{id:reflection-grad}) and (\ref{id:reflection-curl}). The identity (\ref{id:W*-refle}) follows from (\ref{id:reflection-coeffgrad}) and (\ref{id:reflection-cross}).

Additionally, $ (f^1 - \dot{f}^1)|_{\Gamma_0} = 0 $ since $ f^1 \in H^1(\inte) $ and $ \cross{N}{(u^2 - \dot{u}^2)}|_{\Gamma_0} = 0 $ as we showed in the proof of Lemma \ref{le:vanishing_bou_1}.
\end{proof}

\subsection{Relating the boundary measurements with the coefficients in the interior}

\begin{lemma}\label{le:1-suitable-estimate_loc}\rm
Let $ \mu_j, \gamma_j $ belong to $ C^{0,1}(\overline{U}) $. Then, for any $ Y_1 $ given as in the hypothesis of Lemma \ref{le:vanishing_bou_1} with coefficients $ \tilde{\mu}_1, \tilde{\gamma}_1 $ and any $ Y_2 $ given as in the hypothesis of Lemma \ref{le:vanishing_bou_2} with coefficients $ \tilde{\mu}_2, \tilde{\gamma}_2 $, one has that the elements $ \mathpzc{E}_1 = E_1 - \dot{E}_1 $, $ \mathpzc{H}_1 = H_1 - \dot{H}_1 $ and $ \mathpzc{Y}_2 = Y_2 - \dot{Y}_2 $, expressed in the form
\[ Y_1 = \fcv{0}{\tilde{\mu}_1^{1/2} H_1^ t}{0}{\tilde{\gamma}_1^{1/2} E_1^t}^t
\quad \mathpzc{Y}_2 = \fcv{\mathpzc{f}^1}{(\mathpzc{u}^1)^t}{\mathpzc{f}^2}{(\mathpzc{u}^2)^t}^t, \]
satisfy the following estimate
\begin{gather*}
|\Inner{Y_1}{P\mathpzc{Y}_2}_{\inte} - \Inner{PY_1}{\mathpzc{Y}_2}_{\inte}| \leq\\ 
\leq C \delta_C(C^1_\Gamma, C^2_\Gamma) \left( \norm{\mu_2^{-1/2}}{}{C^{0,1}(\overline{\Gamma})} \norm{\mathpzc{g}_2|_\Gamma}{}{B^{1/2}(\Gamma)} + \norm{\gamma_2^{1/2}}{}{C^{0,1}(\overline{\Gamma})} \norm{\mathpzc{z}_1|_\Gamma}{}{TH(\Gamma)} + \right.
\end{gather*}
\begin{gather*}
\left. + \norm{\gamma_2^{-1/2}}{}{C^{0,1}(\overline{\Gamma})} \norm{\mathpzc{g}_1}{}{B^{1/2}_0(\Gamma)}
+ \norm{\mu_2^{1/2}}{}{C^{0,1}(\overline{\Gamma})} \norm{\mathpzc{z}_2}{}{TH_0(\Gamma)} \right) \norm{\cross{N}{\mathpzc{E}_1}}{}{TH_0(\Gamma)}\\
+ C \left( \norm{\mu_1^{-1/2} - \mu_2^{-1/2}}{}{C^{0,1}(\overline{\Gamma})} \norm{\mathpzc{g}_2|_\Gamma}{}{B^{1/2}(\Gamma)} \right. \\
+ \norm{\gamma_1^{1/2} - \gamma_2^{1/2}}{}{C^{0,1}(\overline{\Gamma})} \norm{\mathpzc{z}_1|_\Gamma}{}{TH(\Gamma)} + \norm{\gamma_1^{-1/2} - \gamma_2^{-1/2}}{}{C^{0,1}(\overline{\Gamma})} \norm{\mathpzc{g}_1}{}{B^{1/2}_0(\Gamma)}\\
\left. + \norm{\mu_1^{1/2} - \mu_2^{1/2}}{}{C^{0,1}(\overline{\Gamma})} \norm{\mathpzc{z}_2}{}{TH_0(\Gamma)} \right)\left( \norm{\cross{N}{\mathpzc{E}_1}}{}{TH_0(\Gamma)} + \norm{\cross{N}{\mathpzc{H}_1}|_\Gamma}{}{TH(\Gamma)} \right).
\end{gather*}
Here $ \mathpzc{g}_1, \mathpzc{g}_2 \in B^{1/2}(\bouU) $ stand for $ \mathpzc{g}_1 = \mathpzc{f}^1|_{\bouU}, \, \mathpzc{g}_2 = \mathpzc{f}^2|_{\bouU} $ and $ \mathpzc{z}_1, \mathpzc{z}_2 \in TH(\bouU) $ stand for $ \mathpzc{z}_1 = \cross{N}{\mathpzc{u}^1}, \, \mathpzc{z}_2 = \cross{N}{\mathpzc{u}^2} $. Here $ C^j_\Gamma = C(\mu_j, \gamma_j; \Gamma) $ with $ j=1,2 $.
\end{lemma}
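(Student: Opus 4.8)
The plan is to turn the left-hand side into a boundary pairing, to localize that pairing on the accessible part $\Gamma$ by means of the reflection symmetry, and finally to split it into a piece governed by the Cauchy-data distance $\delta_C$ and a piece governed by the boundary differences of the coefficients. First I would use the integration-by-parts identity $\Inner{PY_1}{\mathpzc{Y}_2}=\Inner{P_N Y_1}{\mathpzc{Y}_2|_{\bou}}_{\bou}+\Inner{Y_1}{P\mathpzc{Y}_2}$ (valid for $\Hcurl{}$-type fields after a routine density argument), which gives
\[ \Inner{Y_1}{P\mathpzc{Y}_2}_{\inte}-\Inner{PY_1}{\mathpzc{Y}_2}_{\inte}=-\Inner{P_N Y_1}{\mathpzc{Y}_2|_{\bou}}_{\bou}. \]
Using the explicit form of $P_N$ in (\ref{for:P_A}) and the shape of $Y_1$, the right-hand side expands into four boundary integrals pairing the normal and tangential traces of $\tilde\gamma_1^{1/2}E_1$ and $\tilde\mu_1^{1/2}H_1$ against the traces $\mathpzc{f}^1,\mathpzc{u}^1,\mathpzc{f}^2,\mathpzc{u}^2$ of $\mathpzc{Y}_2$.

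Next I would localize on $\Gamma$. Since $\bou=\Gamma\cup\mathcal{R}(\Gamma)$, I split each integral and change variables $x\mapsto\mathcal{R}(x)$ on the reflected part. Because $\mathpzc{Y}_2=Y_2-\dot Y_2$ is antisymmetric (that is $(\mathpzc{Y}_2)_x=-\dot J(\mathpzc{Y}_2)_{\mathcal{R}(x)}$, as $\dot J^2=I_8$ and $\mathcal{R}^2=\mathrm{id}$) and $\tilde\mu_1,\tilde\gamma_1$ are $\mathcal{R}$-invariant, the identities (\ref{id:reflection-curl}) and (\ref{id:reflection-cross}) let me fuse the two pieces into a single integral over $\Gamma$ in which $E_1,H_1$ are replaced by $\mathpzc{E}_1=E_1-\dot E_1$ and $\mathpzc{H}_1=H_1-\dot H_1$. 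Since $\mathpzc{E}_1,\mathpzc{H}_1$ solve (\ref{eq:ME_reflected_domain}), I then rewrite the two normal-trace contributions through surface divergences, via $\tilde\gamma_1(\inner{N}{\mathpzc{E}_1})=(i\omega)^{-1}\Div(\cross{N}{\mathpzc{H}_1})$ and $\tilde\mu_1(\inner{N}{\mathpzc{H}_1})=-(i\omega)^{-1}\Div(\cross{N}{\mathpzc{E}_1})$. The outcome is a sum of four dualities on $\Gamma$ carrying the weights $\tilde\gamma_1^{\pm1/2},\tilde\mu_1^{\pm1/2}$, depending on solution $1$ only through the Cauchy data $(T_1,S_1):=(\cross{N}{\mathpzc{E}_1},\cross{N}{\mathpzc{H}_1}|_{\Gamma})$ and on $\mathpzc{g}_1,\mathpzc{g}_2,\mathpzc{z}_1,\mathpzc{z}_2$; here $T_1,\mathpzc{z}_2\in TH_0(\Gamma)$ and $\mathpzc{g}_1\in B^{1/2}_0(\Gamma)$ thanks to Lemma \ref{le:vanishing_bou_1} and Lemma \ref{le:vanishing_bou_2}.

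I would then split every weight as $\tilde\gamma_1^{\pm1/2}=\tilde\gamma_2^{\pm1/2}+(\tilde\gamma_1^{\pm1/2}-\tilde\gamma_2^{\pm1/2})$, and likewise for $\mu$. The four difference terms are estimated at once: the multiplier bounds (\ref{es:besov_product_0}), (\ref{es:besov_product_Gamma}) and (\ref{es:TH-product_0}), the surface-divergence bounds (\ref{es:Div-boun-TH0}), (\ref{es:Div-boun-THGamma}), together with the dualities $B^{-1/2}(\Gamma)=(B^{1/2}_0(\Gamma))^*$, $B^{-1/2}_0(\Gamma)=(B^{1/2}(\Gamma))^*$ and $TH_0(\Gamma)=(TH(\Gamma))^*$, bound each by the matching coefficient difference in $C^{0,1}(\overline\Gamma)$ times the matching trace of $\mathpzc{Y}_2$ times $\norm{\cross{N}{\mathpzc{E}_1}}{}{TH_0(\Gamma)}+\norm{\cross{N}{\mathpzc{H}_1}|_{\Gamma}}{}{TH(\Gamma)}$; this is precisely the second group of terms. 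The remaining coefficient-$2$ contributions assemble into a bilinear form $\mathcal{B}_2[(T_1,S_1),\mathpzc{Y}_2]$ depending on solution $1$ only through $(T_1,S_1)$, and the same estimates give $|\mathcal{B}_2[(T,S),\mathpzc{Y}_2]|\le C\,\Sigma_2\,\norm{(T,S)}{}{TH_0(\Gamma)\times TH(\Gamma)}$, where $\Sigma_2$ denotes the sum of the four coefficient-$2$ factors appearing in the statement.

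The crux, and the main obstacle, is the orthogonality $\mathcal{B}_2[(T,S),\mathpzc{Y}_2]=0$ for every $(T,S)\in C^2_\Gamma$. If $(T,S)$ is realized by a solution $(E',H')$ of (\ref{ME_without_sources}) on $U$ with coefficients $\mu_2,\gamma_2$, I form $Y'=\fcv{0}{\tilde\mu_2^{1/2}(H')^t}{0}{\tilde\gamma_2^{1/2}(E')^t}^t$, which solves $(P+W_2)Y'=0$ on $U$, and apply Green's identity on $U$: since $\mathpzc{Y}_2$ solves $(P+W_2^\ast)\mathpzc{Y}_2=0$ and $W_2^\ast$ is the formal adjoint of $W_2$, the interior part cancels and $\Inner{P_N Y'}{\mathpzc{Y}_2|_{\bouU}}_{\bouU}=0$. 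On $\overline{\Gamma_0}$ the integrand vanishes: $(\mathpzc{f}^1)|_{\Gamma_0}=0$ and $\cross{N}{\mathpzc{u}^2}|_{\Gamma_0}=0$ by Lemma \ref{le:vanishing_bou_2}, while $\cross{N}{E'}|_{\Gamma_0}=0$ (because $T\in TH_0(\Gamma)$) forces both $\cross{N}{E'}$ and, through (\ref{eq:ME_reflected_domain}), $\inner{N}{H'}$ to vanish there. Hence the $\bouU$-pairing reduces to its $\Gamma$-part, which equals $-\mathcal{B}_2[(T,S),\mathpzc{Y}_2]$, proving the claim. Finally, normalizing $(T_1,S_1)$ and choosing $(T_2,S_2)\in C^2_\Gamma$ realizing the infimum in Definition \ref{def:Cdistance_res}, the vanishing yields $\mathcal{B}_2[(T_1,S_1),\mathpzc{Y}_2]=\mathcal{B}_2[(T_1,S_1)-(T_2,S_2),\mathpzc{Y}_2]$, bounded by $C\,\Sigma_2\,\norm{(T_1,S_1)-(T_2,S_2)}{}{TH_0(\Gamma)\times TH(\Gamma)}\le C\,\Sigma_2\,\delta_C(C^1_\Gamma,C^2_\Gamma)\,\norm{\cross{N}{\mathpzc{E}_1}}{}{TH_0(\Gamma)}$, which is the first group. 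The delicate points are the sign bookkeeping in the reflection step and the vanishing of the $\Gamma_0$-contribution; everything else reduces to routine applications of the product and trace estimates of this section.
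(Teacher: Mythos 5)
Your proposal is correct and follows essentially the same route as the paper's proof: the paper first folds the volume integrals onto $U$ via (\ref{id:P-refle}) and subtracts an auxiliary solution $\mathpzc{L}$ built from an arbitrary element of $C^2_\Gamma$ before integrating by parts, whereas you integrate by parts on $\inte$ first, fold the boundary pairing onto $\Gamma$, and phrase the same cancellation as the orthogonality $\mathcal{B}_2[(T,S),\mathpzc{Y}_2]=0$ obtained from Green's identity on $U$. The essential ingredients --- the reflection identities, the vanishing of the $\Gamma_0$-contribution via (\ref{ter:vanishing_atboundary_1}) and (\ref{ter:vanishing_atboundary_2}), the conversion of normal traces to surface divergences through Maxwell's equations, the product and trace estimates (\ref{es:besov_product_0}), (\ref{es:besov_product_Gamma}), (\ref{es:TH-product_0}), (\ref{es:Div-boun-TH0}), (\ref{es:Div-boun-THGamma}), and the final infimum over $C^2_\Gamma$ --- are identical, so this is only a reorganization, not a different argument.
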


\begin{proof}
It is easy to check, using (\ref{id:P-refle}) that
\[ \Inner{Y_1}{P\mathpzc{Y}_2}_{\inte} - \Inner{PY_1}{\mathpzc{Y}_2}_{\inte} = \Inner{\mathpzc{Y}_1}{P\mathpzc{Y}_2}_{U} - \Inner{P\mathpzc{Y}_1}{\mathpzc{Y}_2}_{U}, \]
where $ \mathpzc{Y}_1 = Y_1 - \dot{Y}_1 $. Let $ \mathpzc{L} $ be
\[ \mathpzc{L} = \fcv{0}{\mu_2^{1/2} \mathpzc{H}_2^t}{0}{\gamma_2^{1/2} \mathpzc{E}_2^t}^t, \]
with $ \mathpzc{E}_2, \mathpzc{H}_2 \in \HcurlU{} $ an arbitrary solution of
\begin{equation}\label{eq:ME_intermSOL}
\curl \mathpzc{H}_2 + i\omega \gamma_2 \mathpzc{E}_2 = 0, \qquad \curl \mathpzc{E}_2 - i\omega \mu_2 \mathpzc{H}_2 = 0
\end{equation}
in $ U $ and satisfying $ \mathrm{supp}\, \cross{N}{\mathpzc{E}_2} \subset \overline{\Gamma} $. Since $ ( P + W_2^\ast ) \mathpzc{Y}_2 = 0 $ and $ ( P + W_2 ) \mathpzc{L} = 0 $ in $ U $, one has that $ \Inner{\mathpzc{L}}{P\mathpzc{Y}_2}_U = \Inner{P\mathpzc{L}}{\mathpzc{Y}_2}_U $, hence
\[ \Inner{\mathpzc{Y}_1}{P\mathpzc{Y}_2}_U - \Inner{P\mathpzc{Y}_1}{\mathpzc{Y}_2}_U = \Inner{\mathpzc{Y}_1 - \mathpzc{L}}{P\mathpzc{Y}_2}_U - \Inner{P(\mathpzc{Y}_1 - \mathpzc{L})}{\mathpzc{Y}_2}_U. \]
On the other hand, we have, using (\ref{def:normal-component}) 
and (\ref{for:duality_TH-TH*}) 
, that
\begin{gather*}
\Inner{\mathpzc{Y}_1 - \mathpzc{L}}{P\mathpzc{Y}_2}_U - \Inner{P(\mathpzc{Y}_1 - \mathpzc{L})}{\mathpzc{Y}_2}_U =\\
= i\dual{\inner{N}{(\mu_1 \mathpzc{H}_1 - \mu_2 \mathpzc{H}_2)}}{\mu_2^{-1/2} \mathpzc{g}_2} + i\dual{\inner{N}{(\mu_1 \mathpzc{H}_1)}}{(\mu_1^{-1/2} - \mu_2^{-1/2}) \mathpzc{g}_2}\\
+ i\dual{\inner{N}{(\gamma_1 \mathpzc{E}_1 - \gamma_2 \mathpzc{E}_2)}}{\overline{\gamma_2^{-1/2}} \mathpzc{g}_1} + i\dual{\inner{N}{(\gamma_1 \mathpzc{E}_1)}}{(\overline{\gamma_1^{-1/2} - \gamma_2^{-1/2}}) \mathpzc{g}_1}\\
- i\dual{\cross{N}{(\mathpzc{H}_1-\mathpzc{H}_2)}}{\cross{N}{(\mu_2^{1/2}\mathpzc{z}_2)}} - i\dual{\cross{N}{\mathpzc{H}_1}}{\cross{N}{((\mu_1^{1/2} - \mu_2^{1/2})\mathpzc{z}_2)}}\\
- i\dual{\cross{N}{(\gamma_2^{1/2} \cross{N}{(\mathpzc{E}_1-\mathpzc{E}_2)})}}{\mathpzc{z}_1} - i\dual{\cross{N}{((\gamma_1^{1/2} - \gamma_2^{1/2})\cross{N}{\mathpzc{E}_1})}}{\mathpzc{z}_1}.
\end{gather*}
Furthermore, from the Maxwell's equations one deduces that
\[ \inner{N}{(\gamma_j \mathpzc{E}_j)} = \frac{1}{i\omega} \Div \, (\cross{N}{\mathpzc{H}_j}), \quad \inner{N}{(\mu_j \mathpzc{H}_j)} = -\frac{1}{i\omega} \Div \, (\cross{N}{\mathpzc{E}_j}), \]
for $ j = 1,2 $. Hence,
\begin{gather*}
\Inner{\mathpzc{Y}_1}{P\mathpzc{Y}_2}_U - \Inner{P\mathpzc{Y}_1}{\mathpzc{Y}_2}_U = \\
= -\frac{1}{\omega} \dual{\Div (\cross{N}{\mathpzc{E}_1} - \cross{N}{\mathpzc{E}_2})}{\mu_2^{-1/2} \mathpzc{g}_2} -\frac{1}{\omega} \dual{\Div\, \cross{N}{\mathpzc{E}_1}}{(\mu_1^{-1/2} - \mu_2^{-1/2}) \mathpzc{g}_2}\\
+ \frac{1}{\omega} \dual{\Div(\cross{N}{\mathpzc{H}_1} - \cross{N}{\mathpzc{H}_2})}{\overline{\gamma_2^{-1/2}} \mathpzc{g}_1} + \frac{1}{\omega} \dual{\Div\, \cross{N}{\mathpzc{H}_1}}{(\overline{\gamma_1^{-1/2} - \gamma_2^{-1/2}}) \mathpzc{g}_1}\\
- i\dual{\cross{N}{\mathpzc{H}_1} - \cross{N}{\mathpzc{H}_2}}{\cross{N}{(\mu_2^{1/2}\mathpzc{z}_2)}} - i\dual{\cross{N}{\mathpzc{H}_1}}{\cross{N}{((\mu_1^{1/2} - \mu_2^{1/2})\mathpzc{z}_2)}}\\
- i\dual{\cross{N}{(\gamma_2^{1/2} (\cross{N}{\mathpzc{E}_1} - \cross{N}{\mathpzc{E}_2}))}}{\mathpzc{z}_1} - i\dual{\cross{N}{((\gamma_1^{1/2} - \gamma_2^{1/2}) \cross{N}{\mathpzc{E}_1})}}{\mathpzc{z}_1}.
\end{gather*}
Let us denote $ \cross{N}{\mathpzc{E}_j} = T_j $, $ \cross{N}{\mathpzc{H}_j}|_\Gamma = S_j $, then by using the appropriate dualities, the boundary conditions (\ref{ter:vanishing_atboundary_1}), (\ref{ter:vanishing_atboundary_2}) and the estimates (\ref{es:besov_product_0}), (\ref{es:besov_product_Gamma}), (\ref{es:TH-product_0}), (\ref{es:Div-boun-TH0}) and (\ref{es:Div-boun-THGamma}) we get
\begin{gather*}
|\Inner{Y_1}{P\mathpzc{Y}_2}_{\inte} - \Inner{PY_1}{\mathpzc{Y}_2}_{\inte}| \leq\\ 
\leq C \left( \norm{T_1 - T_2}{}{TH_0(\Gamma)} + \norm{S_1 - S_2}{}{TH(\Gamma)} \right) \left( \norm{\mu_2^{-1/2}}{}{C^{0,1}(\overline{\Gamma})} \norm{\mathpzc{g}_2|_\Gamma}{}{B^{1/2}(\Gamma)} \right. \\
+ \norm{\gamma_2^{1/2}}{}{C^{0,1}(\overline{\Gamma})} \norm{\mathpzc{z}_1|_\Gamma}{}{TH(\Gamma)} + \norm{\gamma_2^{-1/2}}{}{C^{0,1}(\overline{\Gamma})} \norm{\mathpzc{g}_1}{}{B^{1/2}_0(\Gamma)}\\
\left. + \norm{\mu_2^{1/2}}{}{C^{0,1}(\overline{\Gamma})} \norm{\mathpzc{z}_2}{}{TH_0(\Gamma)} \right) + C \left( \norm{\mu_1^{-1/2} - \mu_2^{-1/2}}{}{C^{0,1}(\overline{\Gamma})} \norm{\mathpzc{g}_2|_\Gamma}{}{B^{1/2}(\Gamma)} \right. \\
+ \norm{\gamma_1^{1/2} - \gamma_2^{1/2}}{}{C^{0,1}(\overline{\Gamma})} \norm{\mathpzc{z}_1|_\Gamma}{}{TH(\Gamma)} + \norm{\gamma_1^{-1/2} - \gamma_2^{-1/2}}{}{C^{0,1}(\overline{\Gamma})} \norm{\mathpzc{g}_1}{}{B^{1/2}_0(\Gamma)}\\
\left. + \norm{\mu_1^{1/2} - \mu_2^{1/2}}{}{C^{0,1}(\overline{\Gamma})} \norm{\mathpzc{z}_2}{}{TH_0(\Gamma)} \right)\left( \norm{T_1}{}{TH_0(\Gamma)} + \norm{S_1}{}{TH(\Gamma)} \right).
\end{gather*}
This estimate holds for all $ (T_2, S_2) \in C^2_\Gamma $, since $ \mathpzc{E}_2, \mathpzc{H}_2 $ was chosen to be an arbitrary solution of (\ref{eq:ME_intermSOL}) in $ U $ satisfying $ \mathrm{supp}\, \cross{N}{\mathpzc{E}_2} \subset \overline{\Gamma} $. Finally, the wanted estimate is a consequence of Definition \ref{def:Cdistance_res}.
\end{proof}

\begin{proposition}\label{le:2-suitable-estimate_loc}\rm
Let $ \gamma_1, \mu_1 $ and $ \gamma_2, \mu_2 $ be in the class of $ B $-stable coefficients on $ \Gamma $ at frequency $ \omega $. Then, there exists a constant $ C(M) $ such that, for any $ Z_1 \in H^1(\inte; \mathcal{Y}) $ satisfying $ Y_1 = (P - W_1^t) Z_1 $ with $ Y_1 $ as in Lemma \ref{le:1-suitable-estimate_loc} and any $ \mathpzc{Y}_2 \in H^1(\inte; \mathcal{Y}) $ as in Lemma \ref{le:1-suitable-estimate_loc}, one has
\begin{gather*}
|\Inner{(Q_1 - Q_2)Z_1}{\mathpzc{Y}_2}_{\inte}| \leq C\, B\big( \delta_C(C^1_\Gamma, C^2_\Gamma) \big) \norm{Z_1}{}{H^1(\inte; \mathcal{Y})} \norm{\mathpzc{Y}_2}{}{H^1(\inte; \mathcal{Y})}\\
+\, C \, B\big( \delta_C(C^1_\Gamma, C^2_\Gamma) \big) \left( \norm{\mathpzc{E}_1}{}{\HcurlU{}} + \norm{\mathpzc{H}_1}{}{\HcurlU{}} \right)\\
\times \left( \norm{\mathpzc{f}^1}{}{H^1(U)} + \norm{\mathpzc{u}^1}{}{\HcurlU{}} + \norm{\mathpzc{f}^2}{}{H^1(U)} + \norm{\mathpzc{u}^2}{}{\HcurlU{}} \right),
\end{gather*}
Here $ Q_j $ is the matrix (\ref{def:matrixQ-0th}) associates to $ \tilde{\mu}_j, \tilde{\gamma}_j $ with $ j=1,2 $.
\end{proposition}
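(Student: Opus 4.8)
The plan is to reduce the bulk quantity $\Inner{(Q_1 - Q_2)Z_1}{\mathpzc{Y}_2}_{\inte}$ to the boundary pairing already estimated in Lemma \ref{le:1-suitable-estimate_loc} plus a genuinely lower-order boundary remainder. Since $Y_1$ is of the form in Lemma \ref{le:vanishing_bou_1}, it solves the rescaled system $(P + W_1)Y_1 = 0$, so that $Y_1 = (P - W_1^t)Z_1$ forces $(-\Delta I_8 + Q_1)Z_1 = (P + W_1)(P - W_1^t)Z_1 = 0$, i.e. $Q_1 Z_1 = \Delta Z_1$. Writing (\ref{eq:schr1}) for the coefficients $\tilde\mu_2, \tilde\gamma_2$ gives $Q_2 Z_1 = \Delta Z_1 + (P + W_2)(P - W_2^t)Z_1$, hence $(Q_1 - Q_2)Z_1 = -(P + W_2)(P - W_2^t)Z_1$. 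I would then reintroduce $Y_1$ through $(P - W_2^t)Z_1 = Y_1 + (W_1^t - W_2^t)Z_1$ and $(P + W_2)Y_1 = (P + W_1)Y_1 + (W_2 - W_1)Y_1 = (W_2 - W_1)Y_1$, obtaining the key algebraic identity
\[ (Q_1 - Q_2)Z_1 = (W_1 - W_2)Y_1 - (P + W_2)\big[(W_1^t - W_2^t)Z_1\big]. \]

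Next I would pair this with $\mathpzc{Y}_2$ in $\Inner{\cdot}{\cdot}_{\inte}$. For the second summand, the integration by parts formula together with $(P + W_2^*)\mathpzc{Y}_2 = 0$ (Lemma \ref{le:vanishing_bou_2}) collapses the bulk contribution and leaves only a boundary term: with $V = (W_1^t - W_2^t)Z_1$ one has $\Inner{(P + W_2)V}{\mathpzc{Y}_2}_{\inte} = \Inner{P_N V}{\mathpzc{Y}_2}_{\bou}$, since $\Inner{V}{(P + W_2^*)\mathpzc{Y}_2}_{\inte} = 0$ and $W_2$ pairs with its adjoint $W_2^* = \overline{W_2^t}$. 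For the first summand I would substitute the equations $P Y_1 = -W_1 Y_1$ and $P\mathpzc{Y}_2 = -W_2^*\mathpzc{Y}_2$ to recognize $\Inner{(W_1 - W_2)Y_1}{\mathpzc{Y}_2}_{\inte} = \Inner{Y_1}{P\mathpzc{Y}_2}_{\inte} - \Inner{PY_1}{\mathpzc{Y}_2}_{\inte}$, which is exactly the quantity controlled in Lemma \ref{le:1-suitable-estimate_loc}. Thus
\[ \Inner{(Q_1 - Q_2)Z_1}{\mathpzc{Y}_2}_{\inte} = \Big(\Inner{Y_1}{P\mathpzc{Y}_2}_{\inte} - \Inner{PY_1}{\mathpzc{Y}_2}_{\inte}\Big) - \Inner{P_N\big[(W_1^t - W_2^t)Z_1\big]}{\mathpzc{Y}_2}_{\bou}. \]

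It then remains to estimate the two pieces. For the first I would invoke Lemma \ref{le:1-suitable-estimate_loc}: the hypothesis $|r| \leq B(|r|)$ absorbs the factor $\delta_C(C^1_\Gamma, C^2_\Gamma)$ into $B(\delta_C(C^1_\Gamma, C^2_\Gamma))$, the $B$-stability of $\mu_j, \gamma_j$ bounds the coefficient-difference norms on $\overline\Gamma$ by $B(\delta_C(C^1_\Gamma, C^2_\Gamma))$, and the traces $\mathpzc{g}_1, \mathpzc{g}_2, \mathpzc{z}_1, \mathpzc{z}_2$ and $\cross{N}{\mathpzc{E}_1}, \cross{N}{\mathpzc{H}_1}$ are controlled by $\norm{\mathpzc{f}^j}{}{H^1(U)}$, $\norm{\mathpzc{u}^j}{}{\HcurlU{}}$ and $\norm{\mathpzc{E}_1}{}{\HcurlU{}} + \norm{\mathpzc{H}_1}{}{\HcurlU{}}$; this produces the second line of the claim. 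For the boundary remainder I would use that $P_N$ is algebraic, so pointwise $|P_N[(W_1^t - W_2^t)Z_1]| \leq C\,|W_1 - W_2|\,|Z_1|$ on $\bou$. The factor $W_1 - W_2$ involves only $\kappa_j = \omega\tilde\mu_j^{1/2}\tilde\gamma_j^{1/2}$ and $\nabla\log\tilde\gamma_j, \nabla\log\tilde\mu_j$, and is bounded in $L^\infty(\bou)$ by $C(M)\,B(\delta_C(C^1_\Gamma, C^2_\Gamma))$: on $\Gamma$ this is the $B$-stability estimate (the uniform ellipticity making $\sqrt{\cdot}$ and $\log$ Lipschitz), while on the reflected part $\mathcal{R}(\Gamma)$ of $\bou$ the evenness of the extensions and (\ref{id:reflection-coeffgrad}) carry the same bound across. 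A trace estimate $\norm{\cdot}{}{L^2(\bou)} \leq C\norm{\cdot}{}{H^1(\inte)}$ applied to $Z_1$ and $\mathpzc{Y}_2$ then yields the first line.

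The main obstacle I anticipate is bookkeeping rather than conceptual: extracting the boundary remainder with exactly the operator $P_N$ and no stray bulk terms relies on the clean cancellation from $(P + W_2^*)\mathpzc{Y}_2 = 0$ and the adjoint identity $\Inner{W_2 \cdot}{\cdot}_{\inte} = \Inner{\cdot}{W_2^* \cdot}_{\inte}$, which requires enough regularity of $(W_1^t - W_2^t)Z_1$ (guaranteed by $\mu_j, \gamma_j \in C^{1,1}$ and $Z_1 \in H^1$) to justify the integration by parts by density. The second delicate point is the reflection argument for $W_1 - W_2$ on $\mathcal{R}(\Gamma)$, where one must verify that the evenness of $\tilde\mu_j, \tilde\gamma_j$ in $x^3$ together with (\ref{id:reflection-coeffgrad}) genuinely propagate the $B$-stability bound from $\Gamma$ to the whole of $\bou = \Gamma \cup \mathcal{R}(\Gamma)$.
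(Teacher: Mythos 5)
Your proof is correct and follows essentially the same route as the paper: both reduce $\Inner{(Q_1-Q_2)Z_1}{\mathpzc{Y}_2}_{\inte}$ to the quantity of Lemma \ref{le:1-suitable-estimate_loc} plus a single boundary remainder involving $(W_1^t-W_2^t)Z_1$ paired against $\mathpzc{Y}_2$ via $P_N$, and then conclude with the $B$-stability bounds on $\Gamma$ (propagated to $\bou$ by the even reflection) together with trace estimates. The only difference is cosmetic: you obtain the key identity from the factorization $-\Delta I_8+Q=(P+W)(P-W^t)$ before pairing, whereas the paper expands $Q=-PW^t+WP-WW^t$ inside the inner product and integrates by parts; since $P_N$ is skew-adjoint for the boundary pairing, your remainder $-\Inner{P_N[(W_1^t-W_2^t)Z_1]}{\mathpzc{Y}_2}_{\bou}$ coincides with the paper's $\Inner{(W_1^t-W_2^t)Z_1}{P_N\mathpzc{Y}_2}_{\bou}$.
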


\begin{proof}
From (\ref{def:matrixQ-0th}) one has
\begin{gather*}
\Inner{(Q_1 - Q_2)Z_1}{\mathpzc{Y}_2}_{\inte} = - \Inner{P(W^t_1 - W^t_2)Z_1}{\mathpzc{Y}_2}_{\inte}\\
+ \Inner{(W_1 - W_2)PZ_1}{\mathpzc{Y}_2}_{\inte} - \Inner{(W_1W^t_1 - W_2W^t_2)Z_1}{\mathpzc{Y}_2}_{\inte}\\
= \Inner{(W_1^t - W_2^t)Z_1}{P_N \mathpzc{Y}_2}_{\bou} - \Inner{W^t_1 Z_1}{P\mathpzc{Y}_2}_{\inte} + \Inner{W^t_2 Z_1}{P\mathpzc{Y}_2}_{\inte}\\
  + \Inner{W_1(P - W_1^t) Z_1}{\mathpzc{Y}_2}_{\inte} - \Inner{PZ_1}{W_2^\ast \mathpzc{Y}_2}_{\inte} + \Inner{W^t_2 Z_1}{W_2^\ast \mathpzc{Y}_2}_{\inte}\\
= \Inner{(W_1^t - W_2^t)Z_1}{P_N \mathpzc{Y}_2}_{\bou} + \Inner{(P-W_1^t)Z_1}{P\mathpzc{Y}_2}_{\inte}  + \Inner{W_1(P-W_1^t)Z_1}{\mathpzc{Y}_2}_{\inte}\\
= \Inner{(W_1^t - W_2^t)Z_1}{P_N \mathpzc{Y}_2}_{\bou} + \Inner{Y_1}{P\mathpzc{Y}_2}_{\inte} - \Inner{PY_1}{\mathpzc{Y}_2}_{\inte}.
\end{gather*}
In order to get the penultimate identity, we used twice that $ (P + W_2^\ast)\mathpzc{Y}_2 = 0 $, while to get the last one, we used that $ Y_1 = (P-W^t_1)Z_1 $ and that $ (P + W_1)Y_1 = 0 $.

It is a straight forward computation to check the next estimate
\begin{gather*}
|\Inner{(W_1^t - W_2^t)Z_1}{P_N Y_2}_{\bou}| \leq C \left( \norm{\kappa_1 - \kappa_2}{}{L^\infty(\Gamma)} + \norm{\nabla(\beta_1 - \beta_2)}{}{L^\infty(\Gamma; \mathbb{C}^3)} \right.\\
\left. + \norm{\nabla(\alpha_1 - \alpha_2)}{}{L^\infty(\Gamma; \mathbb{C}^3)} \right) \norm{Z_1}{}{L^2(\bou; \mathcal{Y})} \norm{\mathpzc{Y}_2}{}{L^2(\bou; \mathcal{Y})}.
\end{gather*}
Here, as usually, the norm of $ L^\infty (\Gamma; \mathbb{C}^3) $ is
\[ \norm{w}{2}{L^\infty(\Gamma; \mathbb{C}^3)} = \sum_{j=1}^3 \norm{w^{(j)}}{2}{L^\infty(\Gamma)}, \]
for any vector field $ w $.
It is a routine computation to check that, on one hand
\begin{gather*}
\norm{\kappa_1 - \kappa_2}{}{L^\infty(\Gamma)} \leq C\, B\big( \delta_C(C^1_\Gamma, C^2_\Gamma) \big),\\
\norm{\nabla(\alpha_1 - \alpha_2)}{}{L^\infty(\Gamma; \mathbb{C}^3)} \leq C\, B\big( \delta_C(C^1_\Gamma, C^2_\Gamma) \big),\\
\norm{\nabla(\beta_1 - \beta_2)}{}{L^\infty(\Gamma; \mathbb{C}^3)} \leq C\, B\big( \delta_C(C^1_\Gamma, C^2_\Gamma) \big).
\end{gather*}
and on the other hand,
\begin{gather*}
\norm{\mu_2^{-1/2}}{}{C^{0,1}(\overline{\Gamma})} + \norm{\gamma_2^{-1/2}}{}{C^{0,1}(\overline{\Gamma})} + \norm{\mu_2^{1/2}}{}{C^{0,1}(\overline{\Gamma})} + \norm{\gamma_2^{1/2}}{}{C^{0,1}(\overline{\Gamma})} \leq C\\
\norm{\mu_1^{-1/2} - \mu_2^{-1/2}}{}{C^{0,1}(\overline{\Gamma})} + \norm{\mu_1^{1/2} - \mu_2^{1/2}}{}{C^{0,1}(\overline{\Gamma})} \leq C\, B\big( \delta_C(C^1_\Gamma, C^2_\Gamma) \big),\\
\norm{\gamma_1^{-1/2} - \gamma_2^{-1/2}}{}{C^{0,1}(\overline{\Gamma})} + \norm{\gamma_1^{1/2} - \gamma_2^{1/2}}{}{C^{0,1}(\overline{\Gamma})} \leq C\, B\big( \delta_C(C^1_\Gamma, C^2_\Gamma) \big),
\end{gather*}
With all these estimates and Lemma \ref{le:1-suitable-estimate_loc} in mind, we get
\begin{gather*}
|\Inner{(Q_1 - Q_2)Z_1}{\mathpzc{Y}_2}_{\inte}| \leq C\, B\big( \delta_C(C^1_\Gamma, C^2_\Gamma) \big) \norm{Z_1}{}{B^{1/2}(\bou; \mathcal{Y})} \norm{\mathpzc{Y}_2}{}{B^{1/2}(\bou; \mathcal{Y})} +
\end{gather*}
\begin{gather*}
+\, C \, B\big( \delta_C(C^1_\Gamma, C^2_\Gamma) \big) \left( \norm{\cross{N}{\mathpzc{E}_1}}{}{TH_0(\Gamma)} + \norm{\cross{N}{\mathpzc{H}_1}|_\Gamma}{}{TH(\Gamma)} \right)\\
\times \left( \norm{\mathpzc{g}_1}{}{B^{1/2}_0(\Gamma)} + \norm{\mathpzc{z}_1|_\Gamma}{}{TH(\Gamma)} + \norm{\mathpzc{g}_2|_\Gamma}{}{B^{1/2}(\Gamma)} + \norm{\mathpzc{z}_2}{}{TH_0(\Gamma)} \right),
\end{gather*}
hence we deduce the estimate given in the statement.
\end{proof}

\subsection{Recalling the existence of special solutions}\label{sec:constructing_sol}
Let $ B(O; \rho) $ be the open ball centered at the origin $ O $ with radius $ \rho > 0 $ and such that $ \overline{\inte} \subset B(O; \rho) $. Sometimes $ B(O; \rho) $ will be denoted by $ B $ to simplify the notation. Let $ \varepsilon_0 $ and $ \mu_0 $ denote the electric and magnetic constants, respectively. Extend the coefficients $ \tilde{\mu}_j, \tilde{\gamma}_j $ defined in $ \inte $ to functions in $ \mathbb{E} $ --still denoted by $ \tilde{\mu}_j, \tilde{\gamma}_j $--, preserving their smoothness and in such a way that $ \tilde{\mu}_j - \mu_0, \tilde{\gamma}_j - \varepsilon_0 $ have compact support in $ \overline{B(O;\rho)} $ (regarding to extensions see \cite{St}). Note two simple facts. Firstly, the extensions still satisfy the a priori bound and the a priori ellipticity condition in $ \mathbb{E} $. Secondly, the extensions of the matrices (\ref{ter:matrixQ-0th}), (\ref{ter:matrixQ'-0th}) (\ref{ter:matrixQ^-0th}) --still denoted by $ Q_j, Q'_j, \hat{Q}_j $-- satisfy that $ \omega^2 \varepsilon_0 \mu_0 I_8 + Q_j $, $ \omega^2 \varepsilon_0 \mu_0 I_8 + Q'_j $ and $ \omega^2 \varepsilon_0 \mu_0 I_8 + \hat{Q}_j $ have compact support in $ \overline{B(O;\rho)} $.

In the following, we state two propositions which were proven in \cite{C}. Their proofs are based on ideas from \cite{SyU}, \cite{B}, \cite{OS} and \cite{KSaU}.

\begin{proposition}\label{pro:CGO-sch} \rm
Let $ \delta $ be a constant such that $ -1 < \delta < 0 $ and let $ \zeta \in \mathbb{C}^3 $ be such that $ \zeta \cdot \zeta = \omega^2 \varepsilon_0 \mu_0 $ with
\[ |\zeta| > C(\delta, \rho)\left(\sum_{j = 1,2} \norm{\omega^2 \varepsilon_0 \mu_0 + q_j}{}{L^\infty(B)} + \sum_{j,k=1}^8 \norm{(\omega^2 \varepsilon_0 \mu_0 I_8 + Q)_j^k}{}{L^\infty(B)} \right), \]
where
\begin{equation*}
q_1 = -\frac{1}{2} \Delta \beta - \kappa^2 - \frac{1}{4} (\inner{D\beta}{D\beta}), \quad q_2 = - \frac{1}{2} \Delta \alpha - \kappa^2 - \frac{1}{4} (\inner{D\alpha}{D\alpha}).
\end{equation*}
Then, there exists a
\[ Z = e^{i \zeta \cdot x} (L + R) \]
solution of $ (-\Delta I_8 + Q) Z = 0 $ in $ \mathbb{E} $, with $ Z|_\inte \in H^2(\inte; \mathcal{Y}) $,
\[ L = \frac{1}{|\zeta|} \frv{\zeta \cdot A}{\omega \varepsilon_0^{1/2} \mu_0^{1/2} B}{\zeta \cdot B}{\omega \varepsilon_0^{1/2} \mu_0^{1/2} A}, \]
for $ A, B $ constant complex vector fields, and $ R $ satisfying
\begin{align*}
\norm{R}{}{L^2_\delta \mathcal{Y}} &\leq \frac{C(\delta, \rho)}{|\zeta|} |L| \sum_{j,k=1}^8 \norm{(\omega^2 \varepsilon_0 \mu_0 I_8 + Q)_j^k}{}{L^\infty(B)}.
\end{align*}
Furthermore, $ Y = (P - W^t)Z $ is solution for $ (P + W)Y = 0 $ in $ \mathbb{E} $ and it reads
\[ Y = \fcv{0}{\mu^{1/2} H^t}{0}{\gamma^{1/2} E^t}^t \]
with $ E, H $ solution for (\ref{ME_without_sources}) in $ \mathbb{E} $.
\end{proposition}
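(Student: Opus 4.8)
The plan is to carry out the standard Sylvester--Uhlmann complex geometric optics (CGO) construction, adapted to this $ 8 \times 8 $ system. Substituting the ansatz $ Z = e^{i\zeta \cdot x}(L + R) $ into $ (-\Delta I_8 + Q)Z = 0 $ and using $ e^{-i\zeta\cdot x}(-\Delta) e^{i\zeta\cdot x} = -\Delta - 2i\zeta\cdot\nabla + \zeta\cdot\zeta $ together with the dispersion relation $ \zeta\cdot\zeta = \omega^2\varepsilon_0\mu_0 $, the equation conjugates to
\[ (-\Delta - 2i\zeta\cdot\nabla)(L+R) + (\omega^2\varepsilon_0\mu_0 I_8 + Q)(L+R) = 0, \]
where, by the reductions of Subsection \ref{sec:constructing_sol}, the coefficient matrix $ \omega^2\varepsilon_0\mu_0 I_8 + Q $ has compact support in $ \overline{B} $. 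Since $ L $ is constant the principal part annihilates it, $ (-\Delta - 2i\zeta\cdot\nabla)L = 0 $, so everything reduces to solving for the remainder
\[ (-\Delta - 2i\zeta\cdot\nabla)R = -(\omega^2\varepsilon_0\mu_0 I_8 + Q)(L+R). \]

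The analytic heart is the Faddeev-type resolvent estimate: for $ -1 < \delta < 0 $ the operator $ (-\Delta - 2i\zeta\cdot\nabla)^{-1} $ maps $ L^2_{\delta+1} $ into $ L^2_\delta $ with norm bounded by $ C(\delta)/|\zeta| $, applied entrywise to each of the eight scalar components. Writing the remainder equation as the fixed point
\[ R = -(-\Delta - 2i\zeta\cdot\nabla)^{-1}\big[(\omega^2\varepsilon_0\mu_0 I_8 + Q)(L+R)\big], \]
the compact support of the potential lets one pass from $ L^2_\delta $ to $ L^2_{\delta+1} $ at the cost of a factor controlled by $ \rho $ and $ \norm{\omega^2\varepsilon_0\mu_0 I_8 + Q}{}{L^\infty(B)} $, so the map is a contraction on $ L^2_\delta\mathcal{Y} $ precisely once $ |\zeta| $ exceeds the threshold $ C(\delta,\rho)(\cdots) $ stated in the proposition. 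The Neumann series then produces $ R $, and reading off the leading iterate gives the bound $ \norm{R}{}{L^2_\delta\mathcal{Y}} \leq \tfrac{C(\delta,\rho)}{|\zeta|}|L|\sum_{j,k}\norm{(\omega^2\varepsilon_0\mu_0 I_8 + Q)_j^k}{}{L^\infty(B)} $. Since the equation reads $ -\Delta Z = -QZ $ with $ Q $ bounded, elliptic regularity upgrades $ Z|_\inte $ to $ H^2(\inte;\mathcal{Y}) $.

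Finally I would pass to a Maxwell solution. By the factorization (\ref{eq:schr1}), $ (P+W)(P-W^t)Z = (-\Delta I_8 + Q)Z = 0 $, so $ Y := (P-W^t)Z $ solves $ (P+W)Y = 0 $ in $ \mathbb{E} $, and undoing the rescaling of Subsection \ref{sec:Me-Se} yields a solution of the augmented system. The genuinely delicate point, which I expect to be the main obstacle, is to show that the two scalar slots of $ Y $ vanish, for only then does $ Y $ take the claimed block form and do the associated $ E,H $ solve (\ref{ME_without_sources}); indeed, by the equivalence recorded in Subsection \ref{sec:Me-Se}, vanishing of the scalar fields $ e,h $ is exactly equivalent to $ E,H $ satisfying Maxwell's equations. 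At the level of principal symbols, applying $ P $ to $ e^{i\zeta\cdot x}L $ replaces $ D $ by $ \zeta $, and the precise algebraic form of $ L $---assembled from $ \zeta\cdot A,\ \zeta\cdot B $ and the transverse amplitudes $ \omega\varepsilon_0^{1/2}\mu_0^{1/2}A,\ \omega\varepsilon_0^{1/2}\mu_0^{1/2}B $, subject to $ \zeta\cdot\zeta = \omega^2\varepsilon_0\mu_0 $---is engineered so that the scalar components cancel to leading order; to get exact vanishing one verifies that these scalar parts obey a decoupled homogeneous equation, whose scalar potential is exactly the $ q_1,q_2 $ appearing in the threshold on $ |\zeta| $, with trivial data, hence vanish identically. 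The stated normalization by $ 1/|\zeta| $ is what makes this cancellation and the contraction hold simultaneously.
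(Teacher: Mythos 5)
Your outline is correct and coincides with the argument behind this statement: the paper does not prove Proposition \ref{pro:CGO-sch} itself but quotes it from \cite{C}, and the proof there is precisely this Sylvester--Uhlmann construction --- conjugation by $e^{i\zeta\cdot x}$, the Faddeev resolvent estimate $L^2_{\delta+1}\to L^2_\delta$ with gain $|\zeta|^{-1}$, a Neumann series for $R$, the factorization $(P+W)(P-W^t)=-\Delta I_8+Q$, and vanishing of the two scalar slots of $Y$ via the decoupled scalar Schr\"odinger equations with potentials $q_1,q_2$, which is exactly why those potentials enter the threshold on $|\zeta|$. One small precision on the step you flag as delicate: the scalar components of the amplitude $(P_\zeta-W^t)L$ cancel \emph{exactly} wherever $W^t=\omega\varepsilon_0^{1/2}\mu_0^{1/2}I_8$, i.e.\ outside $\overline{B(O;\rho)}$, not merely to leading order; this exact cancellation is what places the scalar slots of $e^{-i\zeta\cdot x}Y$ in $L^2_\delta$, so that the uniqueness argument you invoke for the conjugated operator can be applied to conclude they vanish identically.
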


The norm in the proposition is
\[ \norm{f}{2}{L^2_\delta} = \int_{\mathbb{R}^3} (1 + |x|^2)^\delta |f|^2 \, dx. \]
Again $ \mathcal{Y} $ is meanless, it just stands to remark the form of the elements for which the norms are taken.

\begin{proposition}\label{pro:CGO-rescale*}\rm
Let $ \zeta \in \mathbb{C}^3 $ be such that $ \zeta \cdot \zeta = \omega^2 \varepsilon_0 \mu_0 $ with
\[ |\zeta| > C(\rho) \sum_{j,k=1}^8 \norm{(\omega^2 \varepsilon_0 \mu_0 I_8 + \hat Q)_j^k}{}{L^\infty(B)} .\]
Then, there exists a
\[ \hat Y = e^{i\zeta \cdot x}(M + S) \]
solution for the equation $ (P + W^*) \hat Y = 0 $ in $ \mathbb{E} $, with $ \hat Y|_\inte \in H^1(\inte; \mathcal{Y}) $,
\[ M = \frac{1}{|\zeta|}\frv{\inner{\zeta}{\hat A}}{-\cross{\zeta}{\hat A}}{\inner{\zeta}{\hat B}}{\cross{\zeta}{\hat B}}, \]
for $ \hat A, \hat B $ constant complex vector fields, and $ S $ satisfying
\begin{equation*}
\norm{S}{}{L^2(\inte; \mathcal{Y})} \leq \frac{C(\rho, \inte)}{|\zeta|} \sum_{j,k=1}^8 \left( \norm{(\omega^2 \varepsilon_0 \mu_0 I_8 + \hat Q)_j^k}{}{L^\infty(B)} + \norm{W_j^k}{}{L^\infty(\inte)} \right).
\end{equation*}
\end{proposition}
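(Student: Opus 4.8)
The plan is to reduce the first-order system $ (P + W^*)\hat{Y} = 0 $ to the matrix Schr\"odinger equation and to reuse the Faddeev-type construction behind Proposition \ref{pro:CGO-sch}. By the factorization (\ref{eq:schr3}) we have $ (P + W^*)(P - \overline{W})\hat{Z} = (-\Delta I_8 + \hat{Q})\hat{Z} $; hence, as observed right after (\ref{eq:schr3}), once $ \hat{Z} $ solves $ (-\Delta I_8 + \hat{Q})\hat{Z} = 0 $ in $ \mathbb{E} $ the field $ \hat{Y} := (P - \overline{W})\hat{Z} $ solves $ (P + W^*)\hat{Y} = 0 $ automatically. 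Since $ \omega^2 \varepsilon_0 \mu_0 I_8 + \hat{Q} $ was arranged in Subsection \ref{sec:constructing_sol} to have compact support in $ \overline{B(O;\rho)} $, the second-order problem is exactly of the type handled in Proposition \ref{pro:CGO-sch}; the whole task is then to keep track of the leading term and of the derivative loss caused by $ P - \overline{W} $.

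First I would build $ \hat{Z} = e^{i\zeta \cdot x}(\hat{L} + \hat{R}) $ with $ \hat{L} $ a constant vector. Using $ \zeta \cdot \zeta = \omega^2 \varepsilon_0 \mu_0 $, the equation becomes $ (-\Delta - 2i\zeta \cdot \nabla)\hat{R} = -(\omega^2 \varepsilon_0 \mu_0 I_8 + \hat{Q})(\hat{L} + \hat{R}) $, which I would invert with the Faddeev Green operator $ G_\zeta $; its norm gains a factor $ |\zeta|^{-1} $, so for $ |\zeta| $ large a Neumann series (convergent because the potential is compactly supported) produces $ \hat{R} $ with $ \norm{\hat{R}}{}{L^2_\delta} \leq C|\zeta|^{-1}|\hat{L}|\sum_{j,k}\norm{(\omega^2 \varepsilon_0 \mu_0 I_8 + \hat{Q})_j^k}{}{L^\infty(B)} $ and, together with it, $ \hat{Z}|_\inte \in H^2(\inte; \mathcal{Y}) $ plus the companion bound $ \norm{\hat{R}}{}{H^1(\inte; \mathcal{Y})} \leq C|\hat{L}|\sum_{j,k}\norm{(\omega^2 \varepsilon_0 \mu_0 I_8 + \hat{Q})_j^k}{}{L^\infty(B)} $, exactly as in Proposition \ref{pro:CGO-sch}. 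The decisive point is the choice of the free constant; taking
\[ \hat{L} = \frac{1}{|\zeta|}\frv{0}{\hat{B}}{0}{\hat{A}}, \]
a direct computation with the principal symbol of $ P $ gives $ iP_\zeta \hat{L} = M $, where $ P_\zeta $ is the matrix (\ref{for:P_A}) evaluated at $ A = \zeta $.

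Next I would set $ \hat{Y} = (P - \overline{W})\hat{Z} $ and separate the two scales. Because $ P(e^{i\zeta \cdot x} g) = e^{i\zeta \cdot x}(iP_\zeta + P)g $ and $ \hat{L} $ is constant,
\[ e^{-i\zeta \cdot x}\hat{Y} = iP_\zeta \hat{L} + \left( iP_\zeta \hat{R} + P\hat{R} - \overline{W}\hat{L} - \overline{W}\hat{R} \right) = M + S. \]
It then remains to bound $ S $ in $ L^2(\inte; \mathcal{Y}) $. For the delicate term $ iP_\zeta \hat{R} $ I would write $ \norm{P_\zeta \hat{R}}{}{L^2(\inte)} \leq C|\zeta|\norm{\hat{R}}{}{L^2_\delta} \leq C|\hat{L}|\sum_{j,k}\norm{(\omega^2 \varepsilon_0 \mu_0 I_8 + \hat{Q})_j^k}{}{L^\infty(B)} $, which is $ O(|\zeta|^{-1}) $ precisely because $ |\hat{L}| = O(|\zeta|^{-1}) $; the term $ P\hat{R} $ is absorbed by the $ H^1 $ bound on $ \hat{R} $; the term $ \overline{W}\hat{L} $ contributes $ C|\zeta|^{-1}\norm{W_j^k}{}{L^\infty(\inte)} $ --- this is exactly what makes $ \norm{W_j^k}{}{L^\infty(\inte)} $ appear in the statement --- and $ \overline{W}\hat{R} $ is of lower order. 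Summing these bounds produces the claimed estimate for $ S $, while $ \hat{Y}|_\inte \in H^1(\inte; \mathcal{Y}) $ follows from $ \hat{Z}|_\inte \in H^2(\inte; \mathcal{Y}) $ since $ P - \overline{W} $ is of first order.

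The main obstacle is the control of $ iP_\zeta \hat{R} $: the principal symbol grows like $ |\zeta| $ and would naively destroy the decay in $ |\zeta| $, so the estimate only closes because $ \hat{L} $ has been normalized to size $ O(|\zeta|^{-1}) $ while $ G_\zeta $ itself gains one power of $ |\zeta| $. A secondary technical point is securing the $ H^1 $ bound on the remainder (and the $ H^2 $ regularity of $ \hat{Z} $) needed to handle $ P\hat{R} $; this is where the regularity part of Proposition \ref{pro:CGO-sch} is invoked, and it is also the reason $ \hat{Y} $ lands only in $ H^1 $ rather than $ H^2 $.
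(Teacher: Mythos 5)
Your proof is correct and follows precisely the route the paper itself sets up in Subsection \ref{sec:Me-Se}: the paper does not reprove this proposition (it quotes it from \cite{C}), but the intended argument is exactly yours --- build a Sylvester--Uhlmann/Faddeev CGO solution $\hat Z = e^{i\zeta\cdot x}(\hat L + \hat R)$ of $(-\Delta I_8 + \hat Q)\hat Z = 0$ and apply the first-order factor $P - \overline{W}$ from the factorization (\ref{eq:schr3}), with the identification $iP_\zeta \hat L = M$ for $\hat L = |\zeta|^{-1}(0,\hat B^t,0,\hat A^t)^t$ and the decomposition $S = iP_\zeta\hat R + P\hat R - \overline{W}(\hat L + \hat R)$ accounting correctly for both terms in the stated bound. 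No gap; your remark that no vanishing of the scalar components is required here (unlike in Proposition \ref{pro:CGO-sch}, whence the absence of the $q_j$ in the threshold for $|\zeta|$) is also consistent with the statement.
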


\subsection{Proof of the stability}
The main idea in this final part goes back to \cite{A}.

Let $ \tilde{\mu}_1, \tilde{\gamma}_1 $ and $ \tilde{\mu}_2, \tilde{\gamma}_2 $ be two pairs of coefficients as in Theorem \ref{th:stability_loc} and choose
\begin{gather}
\zeta_1=-\frac{1}{2}\xi+i\left(\tau^2+\frac{|\xi|^2}{4}\right)^{1/2}\eta_1+ \left(\tau^2+ \omega^2 \varepsilon_0 \mu_0\right)^{1/2}\eta_2 \label{zeta1def_loc}, \\
\zeta_2=\frac{1}{2}\xi-i\left(\tau^2+\frac{|\xi|^2}{4}\right)^{1/2}\eta_1+ \left(\tau^2+ \omega^2 \varepsilon_0 \mu_0\right)^{1/2}\eta_2 \label{zeta2def_loc},
\end{gather}
with $ \tau \geq 1 $ a free parameter controlling the size of $ |\zeta_1| $ and $ |\zeta_2| $, where $ \xi, \eta_1,\eta_2 $ constant vector fields satisfying $ |\eta_1| = |\eta_2| = 1 $, $ \eta_1 \cdot \eta_2 = 0 $, $ \eta_j \cdot \xi = 0 $ for $ j = 1 , 2 $ and $ \xi \neq e_3 $. More precisely, if $ \xi $ reads in the coordinates $ \mathcal{E} $ as
\begin{equation*}
\xi = \left(\begin{array}{c c c}
\xi^{(1)} & \xi^{(2)} & \xi^{(3)}
\end{array}\right)^t,
\end{equation*}
we choose
\begin{equation*}
\eta_1 = \frac{1}{|\xi'|} \left(\begin{array}{c}
\xi^{(2)} \\ -\xi^{(1)} \\ 0
\end{array}\right)
\qquad
\eta_2 = \eta_1 \times \frac{\xi}{|\xi|} = \frac{1}{|\xi'||\xi|} \left(\begin{array}{c}
-\xi^{(1)} \xi^{(3)} \\ -\xi^{(2)} \xi^{(3)} \\ |\xi'|^2
\end{array}\right),
\end{equation*}
with $ |\xi'|^2 = (\xi^{(1)})^2 + (\xi^{(2)})^2 $.
Observe that $ \zeta_1 - \overline{\zeta_2} = - \xi $ and
\begin{equation*}
\frac{\zeta_1}{|\zeta_1|} = i \frac{\eta_1}{\sqrt{2}} + \frac{\eta_2}{\sqrt{2}} + \mathcal{O}(\tau^{-1}), \qquad \frac{\zeta_2}{|\zeta_2|} = -i \frac{\eta_1}{\sqrt{2}} + \frac{\eta_2}{\sqrt{2}} + \mathcal{O}(\tau^{-1}).
\end{equation*}
We now choose other euclidean coordinates $ \mathcal{F} $ by fixing the following orthonormal basis of $ \mathbb{R}^3 $:
\begin{equation*}
f_2 = \frac{1}{|\xi '|} \left( \begin{array}{c}
\xi^{(1)} \\ \xi^{(2)} \\ 0
\end{array} \right),
\quad
f_3 = \left( \begin{array}{c}
0 \\ 0 \\ 1
\end{array} \right) = e_3,
\quad
f_1 = f_2 \times f_3.
\end{equation*}
Here the vectors $ f_1, f_2, f_3 $ are expressed in the coordinates $ \mathcal{E} $. In these new coordinates $ \xi $, $ \eta_1 $ and $ \eta_2 $ read as
\begin{equation*}
\xi = \left( \begin{array}{c}
0 \\ |\xi '| \\ \xi^{(3)}
\end{array} \right),
\quad
\eta_1 = \left( \begin{array}{c}
1 \\ 0 \\ 0
\end{array} \right),
\quad
\eta_2 = \frac{1}{|\xi|} \left( \begin{array}{c}
0 \\ -\xi^{(3)} \\ |\xi '|
\end{array} \right).
\end{equation*}
Obviously, the metric $ e $ in these coordinates is still the identity matrix.

Therefore, $ \zeta_1 $ and $ \zeta_2 $ read in these coordinates $ \mathcal{F} $ as
\begin{equation*}
\zeta_1 = \left( \begin{array}{c}
i\Big( \tau^2 + \frac{|\xi|^2}{4} \Big)^{1/2} \\ -\frac{|\xi'|}{2}-(\tau^2 + k^2)^{1/2} \frac{\xi^{(3)}}{|\xi|} \\ -\frac{\xi^{(3)}}{2} + (\tau^2 + k^2)^{1/2}\frac{|\xi'|}{|\xi|}
\end{array} \right)
\quad
\zeta_2 = \left( \begin{array}{c}
-i\Big(\tau^2+\frac{|\xi|^2}{4}\Big)^{1/2} \\ \frac{|\xi'|}{2}-(\tau^2+k^2)^{1/2}\frac{\xi^{(3)}}{|\xi|} \\ \frac{\xi^{(3)}}{2}+(\tau^2+k^2)^{1/2}\frac{|\xi'|}{|\xi|}
\end{array} \right),
\end{equation*}
where $ k^2 = \omega^2 \varepsilon_0 \mu_0 $.

Consider $ Z_1 = e^{i \zeta_1 \cdot x} (L_1 + R_1), Y_1 $ the solution stated in Proposition \ref{pro:CGO-sch} corresponding to the pair $ \tilde{\mu}_1, \tilde{\gamma}_1 $ with $ |\zeta_1| > C(\rho, M) $. Recall that
\begin{equation*}
L_1 = \frac{1}{|\zeta_1|} \frv{\zeta_1 \cdot A_1}{\omega \varepsilon_0^{1/2} \mu_0^{1/2} B_1}{\zeta_1 \cdot B_1}{\omega \varepsilon_0^{1/2} \mu_0^{1/2} A_1},\qquad
\norm{R_1}{}{L^2 (\inte; \mathcal{Y})} \leq \frac{C(\rho, \inte, M)}{|\zeta_1|}.
\end{equation*}
Additionally, consider $ Y_2 = e^{i \zeta_2 \cdot x} (M_2 + S_2) $ the solutions stated in Proposition \ref{pro:CGO-rescale*} corresponding to $ \tilde{\mu}_2, \tilde{\gamma}_2 $ with $ |\zeta_2| > C(\rho, M) $. Also recall that
\begin{equation*}
M_2 = \frac{1}{|\zeta_2|} \frv{\zeta_2 \cdot A_2}{\cross{-\zeta_2}{A_2}}{\zeta_2 \cdot B_2}{\cross{\zeta_2}{B_2}},\qquad
\norm{S_2}{}{L^2(\inte; \mathcal{Y})} \leq \frac{C(\rho, \inte, M)}{|\zeta_2|}.
\end{equation*}
Before plugging $ Z_1 $ and $ \mathpzc{Y}_2 = Y_2 - \dot{Y}_2 $ into the estimate given in Proposition \ref{le:2-suitable-estimate_loc}, we establish a quantitative version of the Riemann-Lebesgue lemma.

\begin{lemma}\label{le:RLlemma}\rm
Let $ \tau $ be a positive parameter, $ q \in L^1 (\mathbb{R}^n) $ and
\[ \omega_q(r) := \sup_{ |y| < r }\norm{q - q(\centerdot - y)}{}{L^1(\mathbb{R}^n)}. \]
Consider $ \phi(\centerdot; \tau) \in C^1(\mathbb{R}^n; \mathbb{R}) $, then for any $  0 < \mathpzc{d} < 1 $ one has
\[ \left| \int_{\mathbb{R}^n} e^{i\phi(\centerdot; \tau)} q \,dV \right| \leq \omega_q(\mathpzc{d}) + C \mathpzc{d}^{-1} \sup_{x \in \mathbb{R}^n} \frac{1 + |\nabla \phi(x; \tau)|}{1 + |\nabla \phi(x; \tau)|^2} \norm{q}{}{L^1(\mathbb{R}^n)}. \]
\end{lemma}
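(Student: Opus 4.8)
The plan is to split $q$ into a regular part and an oscillatory remainder by mollification, controlling the remainder through the modulus of continuity $\omega_q$ and the regular part through a first-order integration by parts tailored to the phase $\phi$. Fix a mollifier $\psi \in C_c^\infty(\mathbb{R}^n)$ with $\int \psi = 1$ and $\mathrm{supp}\,\psi \subset \{|x|<1\}$, set $\psi_{\mathpzc{d}}(x) = \mathpzc{d}^{-n}\psi(x/\mathpzc{d})$, and write $q = (q - q*\psi_{\mathpzc{d}}) + q*\psi_{\mathpzc{d}}$. Since $\int \psi_{\mathpzc{d}} = 1$ and $\psi_{\mathpzc{d}}$ is supported in $\{|y|<\mathpzc{d}\}$, Minkowski's integral inequality gives $\norm{q - q*\psi_{\mathpzc{d}}}{}{L^1(\mathbb{R}^n)} \leq \sup_{|y|<\mathpzc{d}} \norm{q - q(\centerdot - y)}{}{L^1(\mathbb{R}^n)} = \omega_q(\mathpzc{d})$. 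Because $|e^{i\phi}| = 1$, estimating the remainder trivially produces the first term: $\big|\int e^{i\phi}(q - q*\psi_{\mathpzc{d}})\,dV\big| \leq \omega_q(\mathpzc{d})$.

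For the regular part $g := q*\psi_{\mathpzc{d}}$, which is smooth and satisfies $\norm{g}{}{L^1(\mathbb{R}^n)} \leq \norm{q}{}{L^1(\mathbb{R}^n)}$ together with $\norm{\nabla g}{}{L^1(\mathbb{R}^n)} = \norm{q*\nabla\psi_{\mathpzc{d}}}{}{L^1(\mathbb{R}^n)} \leq \mathpzc{d}^{-1}\norm{\nabla\psi}{}{L^1(\mathbb{R}^n)}\norm{q}{}{L^1(\mathbb{R}^n)}$, I would exploit the oscillation of $e^{i\phi}$. The key identity, read off from $\nabla e^{i\phi} = i(\nabla\phi)e^{i\phi}$, is $(1+|\nabla\phi|^2)e^{i\phi} = e^{i\phi} + \tfrac{1}{i}\,\nabla\phi\cdot\nabla e^{i\phi}$. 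Dividing by $1+|\nabla\phi|^2$ and pairing against $g$ gives
\[ \int_{\mathbb{R}^n} e^{i\phi}\, g\,dV = \int_{\mathbb{R}^n}\frac{e^{i\phi}}{1+|\nabla\phi|^2}\,g\,dV + \frac{1}{i}\int_{\mathbb{R}^n}\frac{\nabla\phi}{1+|\nabla\phi|^2}\cdot\nabla e^{i\phi}\; g\,dV. \]
The first integral is bounded in modulus by $\sup_x \tfrac{1}{1+|\nabla\phi|^2}\,\norm{g}{}{L^1(\mathbb{R}^n)}$. In the second integral I would integrate by parts (no boundary contribution, $g$ decaying) to move the derivative off $e^{i\phi}$ onto $g$; the resulting amplitude carries the factor $\tfrac{|\nabla\phi|}{1+|\nabla\phi|^2}$, giving the bound $\sup_x \tfrac{|\nabla\phi|}{1+|\nabla\phi|^2}\,\norm{\nabla g}{}{L^1(\mathbb{R}^n)}$. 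Since both $\tfrac{1}{1+|\nabla\phi|^2}$ and $\tfrac{|\nabla\phi|}{1+|\nabla\phi|^2}$ are dominated by $\tfrac{1+|\nabla\phi|}{1+|\nabla\phi|^2}$, and since $\mathpzc{d}<1$ lets me absorb $\norm{g}{}{L^1(\mathbb{R}^n)} \leq \mathpzc{d}^{-1}\norm{q}{}{L^1(\mathbb{R}^n)}$, the smooth part is bounded by $C\mathpzc{d}^{-1}\sup_x \tfrac{1+|\nabla\phi|}{1+|\nabla\phi|^2}\norm{q}{}{L^1(\mathbb{R}^n)}$, which is exactly the second term.

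The main obstacle is precisely this integration by parts. Differentiating the vector field $\tfrac{g\,\nabla\phi}{1+|\nabla\phi|^2}$ yields, besides the harmless $\tfrac{\nabla\phi\cdot\nabla g}{1+|\nabla\phi|^2}$, an extra term $g\,\nabla\!\cdot\!\big(\tfrac{\nabla\phi}{1+|\nabla\phi|^2}\big)$ involving \emph{second} derivatives of $\phi$, which are not controlled under the bare $C^1$ hypothesis. In the setting where this lemma is used, however, the phase is affine in $x$ (the exponents $\zeta_1\cdot x$, $\zeta_2\cdot x$ from Propositions \ref{pro:CGO-sch} and \ref{pro:CGO-rescale*}, and their reflected counterparts, are linear), so $\nabla\phi$ is \emph{constant}; this divergence vanishes and the integration by parts produces cleanly $i\int e^{i\phi}\,\tfrac{\nabla\phi}{1+|\nabla\phi|^2}\cdot\nabla g\,dV$. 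Hence the estimate closes in the generality actually needed for Theorem \ref{th:stability_loc}. For a genuinely variable $C^1$ gradient one would additionally have to either absorb the extra term or assume $\nabla\!\cdot\!\big(\tfrac{\nabla\phi}{1+|\nabla\phi|^2}\big)$ bounded; I would note this explicitly rather than overstate the generality.
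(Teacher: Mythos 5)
Your proof is correct and follows essentially the same route as the paper: mollify $q$ at scale $\mathpzc{d}$, bound $\|q - q\ast\varphi_{\mathpzc{d}}\|_{L^1}$ by $\omega_q(\mathpzc{d})$, and handle the smooth part via the identity $(1+\nabla\phi\cdot D)e^{i\phi} = (1+|\nabla\phi|^2)e^{i\phi}$ followed by one integration by parts, with $\|\nabla(q\ast\varphi_{\mathpzc{d}})\|_{L^1}\lesssim\mathpzc{d}^{-1}\|q\|_{L^1}$ supplying the factor $\mathpzc{d}^{-1}$. Your caveat about the extra term $g\,\nabla\!\cdot\!\bigl(\tfrac{\nabla\phi}{1+|\nabla\phi|^2}\bigr)$ is a genuine observation: the paper's own proof writes the integrated-by-parts expression without this term, which is only legitimate because the phase actually used, $\phi(x;\tau) = -|\xi'|x^2 + 2(\tau^2+k^2)^{1/2}\tfrac{|\xi'|}{|\xi|}x^3$, is affine, so your remark correctly identifies the implicit hypothesis under which the lemma's $C^1$ statement closes.
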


\begin{proof}
Take $ \varphi \in C^\infty_0 (\mathbb{R}^n; \mathbb{R}_+) $ such that $ \norm{\varphi}{}{L^1(\mathbb{R}^n)} = 1 $ with
\[ \mathrm{supp}\, \varphi \subset \{ x \in \mathbb{R}^n : |x| < 1 \} \]
and denote $ \varphi_\mathpzc{d} = \mathpzc{d}^{-n} \varphi (\centerdot/\mathpzc{d}) $. Then one has that
\[ \int_{\mathbb{R}^n} e^{i\phi(\centerdot; \tau)} q \,dx = \int_{\mathbb{R}^n} e^{i\phi(\centerdot; \tau)} ( q - \varphi_\mathpzc{d} \ast q ) \,dx + \int_{\mathbb{R}^n} e^{i\phi(\centerdot; \tau)} \varphi_\mathpzc{d} \ast q \,dx. \]
On one hand,
\begin{equation*}
\norm{q - \varphi_\mathpzc{d} \ast q}{}{L^1(\mathbb{R}^n)} \leq \int_{\mathbb{R}^n} \varphi(y) \norm{q - q(\centerdot - \mathpzc{d} y)}{}{L^1(\mathbb{R}^n)} \,dy.
\end{equation*}
On the other hand, since
\[ (1 + \nabla \phi(\centerdot; \tau) \cdot D) e^{i \phi(\centerdot; \tau)} = (1 + |\nabla \phi(\centerdot; \tau)|^2) e^{i \phi(\centerdot; \tau)} \]
one has integrating by parts
\[ \int_{\mathbb{R}^n} e^{i\phi(\centerdot; \tau)} \varphi_\mathpzc{d} \ast q \,dx = \int_{\mathbb{R}^n} e^{i\phi(\centerdot; \tau)} \left( \frac{1 - \nabla \phi(\centerdot; \tau) \cdot D}{1 + |\nabla \phi(\centerdot; \tau)|^2} \right) (\varphi_\mathpzc{d} \ast q) \,dx, \]
hence
\[ \left| \int_{\mathbb{R}^n} e^{i\phi(\centerdot; \tau)} \varphi_\mathpzc{d} \ast q \,dx \right| \leq C \mathpzc{d}^{-1} \sup_{x \in \mathbb{R}^n} \frac{1 + |\nabla \phi(x; \tau)|}{1 + |\nabla \phi(x; \tau)|^2} \norm{q}{}{L^1(\mathbb{R}^n)}. \]
\end{proof}

Recall that $ \mu_j, \gamma_j \in H^{2+s}(\inte) $ with $ 0 < s < 1/2 $. In particular, $ \partial^\alpha \mu_j, \partial^\alpha \gamma_j $ are in $ H^s(\inte) $ for $ 0 \leq |\alpha| \leq 2 $. Moreover, the extension by zero allows to identify $ H^t(\inte) = H^t_0(\inte) $ for $ -1/2 < t < 1/2 $ (see \cite{T}). Hence, $ \mathbf{1}_\inte(\partial^\alpha \mu_j) $ and $ \mathbf{1}_\inte(\partial^\alpha \gamma_j) $ are in $ H^s_0(\inte) $ which implies
\begin{align}
\label{es:L1-modulus_of_cont_gamma}
& \sup_{|y| < r} \norm{\mathbf{1}_\inte\partial^\alpha \gamma_j - (\mathbf{1}_\inte\partial^\alpha \gamma_j)(\centerdot - y)}{}{L^1(\mathbb{R}^3)} \leq C r^s, & \\
\label{es:L1-modulus_of_cont_mu}
& \sup_{|y| < r} \norm{\mathbf{1}_\inte\partial^\alpha \mu_j - (\mathbf{1}_\inte\partial^\alpha \mu_j)(\centerdot - y)}{}{L^1(\mathbb{R}^3)} \leq C r^s. &
\end{align}
This property can be found in \cite{St}.

Now we plugging $ Z_1 $ and $ \mathpzc{Y}_2 = Y_2 - \dot{Y}_2 $ into the estimate given in Proposition \ref{le:2-suitable-estimate_loc} with different choices of $ A_j, B_j $.

Choosing $ B_1 = B_2 = 0 $ and $ A_1, A_2 $ such that
\[ \left( i \frac{\eta_1}{\sqrt{2}} + \frac{\eta_2}{\sqrt{2}} \right) \cdot A_1 = \left( i \frac{\eta_1}{\sqrt{2}} + \frac{\eta_2}{\sqrt{2}} \right) \cdot \overline{A_2} = 1 \]
one gets, when $ \tau $ becomes large, that
\begin{gather*}
\Inner{(Q_1 - Q_2) Z_1}{Y_2}_\inte =\\
= \int_\inte e^{- i \xi \cdot x} \left( \frac{1}{2} \Delta (\alpha_1 - \alpha_2) + \frac{1}{4} ( \inner{\nabla \alpha_1}{\nabla \alpha_1} - \inner{\nabla \alpha_2}{\nabla \alpha_2} ) + ( \kappa^2_2 - \kappa^2_1 ) \right) \, dV\\
+\, \mathcal{O}((\tau^2 + |\xi|^2)^{-1/2}),
\end{gather*}
where the implicit constant is $ C(\rho, \inte, M) $. In addition, by Lemma \ref{le:RLlemma} with
\[ \phi(x; \tau) = -|\xi'|x^2 + 2(\tau^2 + k^2)^{1/2} \frac{|\xi'|}{|\xi|}x^3 \]
and (\ref{es:L1-modulus_of_cont_gamma}), (\ref{es:L1-modulus_of_cont_mu}) one has
\begin{equation*}
\Inner{(Q_1 - Q_2) Z_1}{\dot Y_2}_\inte
= \mathcal{O} \left( \mathpzc{d}^s + \frac{\mathpzc{d}^{-1} |\xi|}{(|\xi|^2 + |\xi|^2|\xi'|^2 + 4(\tau^2 + k^2) |\xi'|^2)^{1/2}} \right).
\end{equation*}
Choosing $ A_1 = A_2 = 0 $ and $ B_1, B_2 $ such that
\[ \left( i \frac{\eta_1}{\sqrt{2}} + \frac{\eta_2}{\sqrt{2}} \right) \cdot B_1 = \left( i \frac{\eta_1}{\sqrt{2}} + \frac{\eta_2}{\sqrt{2}} \right) \cdot \overline{B_2} = 1 \]
one gets, when $ \tau $ becomes large, that
\begin{gather*}
\Inner{(Q_1 - Q_2) Z_1}{Y_2} =\\
= \int_\inte e^{- i \xi \cdot x} \left( \frac{1}{2} \Delta (\beta_1 - \beta_2) + \frac{1}{4} ( \inner{\nabla \beta_1}{\nabla \beta_1} - \inner{\nabla \beta_2}{\nabla \beta_2} ) + ( \kappa^2_2 - \kappa^2_1 ) \right) \, dV\\
+\, \mathcal{O}((\tau^2 + |\xi|^2)^{-1/2}),
\end{gather*}
where the implicit constant is $ C(\rho, \inte, M) $. Again, by Lemma \ref{le:RLlemma} and (\ref{es:L1-modulus_of_cont_gamma}), (\ref{es:L1-modulus_of_cont_mu}) one has
\begin{equation*}
\Inner{(Q_1 - Q_2) Z_1}{\dot Y_2}_\inte
= \mathcal{O} \left( \mathpzc{d}^s + \frac{\mathpzc{d}^{-1} |\xi|}{(|\xi|^2 + |\xi|^2|\xi'|^2 + 4(\tau^2 + k^2) |\xi'|^2)^{1/2}} \right).
\end{equation*}
Writing
\begin{align*}
f &= \mathbf{1}_\inte \left( \frac{1}{2} \Delta (\alpha_1 - \alpha_2) + \frac{1}{4} ( \inner{\nabla \alpha_1}{\nabla \alpha_1} - \inner{\nabla \alpha_2}{\nabla \alpha_2} ) + ( \kappa^2_2 - \kappa^2_1 ) \right)\\
g &= \mathbf{1}_\inte \left( \frac{1}{2} \Delta (\beta_1 - \beta_2) + \frac{1}{4} ( \inner{\nabla \beta_1}{\nabla \beta_1} - \inner{\nabla \beta_2}{\nabla \beta_2} ) + ( \kappa^2_2 - \kappa^2_1 ) \right),
\end{align*}
where $ \mathbf{1}_\inte $ is the indicator function of $ \inte $. By Proposition \ref{le:2-suitable-estimate_loc} and the properties of the special solutions, there exist three constants $ c = c(\inte) $, $ C = C(\rho, \inte, M) $ and $ C' = C'(\rho, M) $ such that, for any $ \tau \geq C' $ one has
\begin{gather*}
|\widehat{f}(\xi)| + |\widehat{g}(\xi)| \leq C \left( B\big( \delta_C(C_\Gamma^1, C_\Gamma^2) \big) e^{c(\tau^2 + |\xi|^2)^{1/2}} + (\tau^2 + |\xi|^2)^{-1/2} \right.\\
\left. +\, \mathpzc{d}^s + \frac{\mathpzc{d}^{-1} |\xi|}{(|\xi|^2 + |\xi|^2|\xi'|^2 + 4(\tau^2 + k^2) |\xi'|^2)^{1/2}} \right).
\end{gather*}

Note that, for $ R \geq 1 $, one has
\begin{gather*}
\norm{f}{2}{H^{-1}(\mathbb{E})} + \norm{g}{2}{H^{-1}(\mathbb{E})} = \int_{|\xi| < R} (1 + |\xi|^2)^{-1} \big( |\widehat{f}(\xi)|^2 + |\widehat{g}(\xi)|^2 \big) \, d\xi\\
+ \int_{|\xi| \geq R} (1 + |\xi|^2)^{-1} \big( |\widehat{f}(\xi)|^2 + |\widehat{g}(\xi)|^2 \big) \, d\xi\\
\leq C \left( B\big( \delta_C(C_1, C_2) \big) e^{c(R + \tau)} +\, \tau^{-1} + \mathpzc{d}^s \right)^2 \int_0^R (1 + |r|^2)^{-1} r^2 \, dr\\
+ C \int_{|\xi| < R} (1 + |\xi|^2)^{-1} \frac{\mathpzc{d}^{-2} |\xi|^2}{|\xi|^2 + |\xi|^2|\xi'|^2 + 4(\tau^2 + k^2) |\xi'|^2} \, d\xi\\
+\, (1 + R^2)^{-1} \left( \norm{f}{2}{L^2(\inte)} + \norm{g}{2}{L^2(\inte)} \right).
\end{gather*}

\begin{lemma}\rm
One has that
\[ \int_{|\xi| < R} (1 + |\xi|^2)^{-1} \frac{\mathpzc{d}^{-2} |\xi|^2}{|\xi|^2 + |\xi|^2|\xi'|^2 + 4(\tau^2 + k^2) |\xi'|^2} \, d\xi \leq C \frac{R}{\mathpzc{d}^2 \tau}. \]
\end{lemma}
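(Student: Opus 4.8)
The plan is to reduce the integral to an elementary one-dimensional computation by exploiting the cylindrical symmetry of the integrand about the $ e_3 $-axis and then to read off the gain in $ \tau $ from an explicit integration in the axial variable.

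First I would factor out $ \mathpzc{d}^{-2} $ and discard the harmless factor, using $ |\xi|^2/(1+|\xi|^2) \leq 1 $, so that it suffices to estimate
\[ I := \int_{|\xi| < R} \frac{d\xi}{|\xi|^2 + |\xi|^2|\xi'|^2 + 4(\tau^2 + k^2) |\xi'|^2} \]
by $ C R / \tau $. Writing $ \xi $ in cylindrical coordinates $ (\rho, \theta, \xi_3) $ adapted to the $ e_3 $-axis --- so that $ \rho = |\xi'| $, $ |\xi|^2 = \rho^2 + \xi_3^2 $ and $ d\xi = \rho \, d\rho \, d\theta \, d\xi_3 $ --- the denominator becomes $ (1 + \rho^2)\xi_3^2 + \rho^2(1 + \rho^2 + T^2) $, where $ T^2 := 4(\tau^2 + k^2) $. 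Note that it does not depend on the angle $ \theta $ and depends on $ \xi_3 $ only through $ \xi_3^2 $.

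Next I would enlarge the domain of the $ \xi_3 $-integration from $ \{ |\xi_3| < \sqrt{R^2 - \rho^2} \} $ to all of $ \mathbb{R} $, which only increases $ I $, and apply the elementary identity $ \int_{\mathbb{R}} (a\xi_3^2 + b)^{-1} \, d\xi_3 = \pi/\sqrt{ab} $ with $ a = 1 + \rho^2 $ and $ b = \rho^2(1 + \rho^2 + T^2) $. The crucial point is that the Jacobian factor $ \rho $ cancels exactly the $ 1/\rho $ produced by $ \sqrt{b} $; this both removes the apparent singularity at $ \xi = 0 $ and leaves the clean remainder
\[ I \leq 2\pi^2 \int_0^R \frac{d\rho}{\sqrt{(1 + \rho^2)(1 + \rho^2 + T^2)}}. \]

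Finally, since $ 1 + \rho^2 + T^2 \geq T^2 $ and $ 1 + \rho^2 \geq 1 $, the integrand is bounded by $ 1/T $, and as $ T = 2\sqrt{\tau^2 + k^2} \geq 2\tau $ I obtain $ I \leq 2\pi^2 R / T \leq C R / \tau $; restoring the factor $ \mathpzc{d}^{-2} $ yields the claim. I do not anticipate a genuine obstacle here: the only step requiring care is the cancellation of the Jacobian against the $ \rho^{-1} $ coming from the axial integration, which is precisely what keeps the bound finite at the origin and of the advertised order $ R/(\mathpzc{d}^2 \tau) $, rather than the cruder $ R/\mathpzc{d}^2 $ that one would obtain by discarding the $ 4(\tau^2 + k^2)|\xi'|^2 $ term.
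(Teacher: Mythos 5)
Your proof is correct, and it takes a genuinely different (and somewhat cleaner) route than the paper's. Both arguments pass to cylindrical coordinates about the $e_3$-axis and reduce the estimate to a one-dimensional integral in $\rho=|\xi'|$ carrying the factor $1/\tau$, but the treatment of the axial variable differs. The paper keeps the prefactor $(1+|\xi|^2)^{-1}|\xi|^2$, bounds the reciprocal of the denominator by the geometric mean of two of its partial sums, splits the region into $\{r<t\}$ and $\{t<r\}$, and evaluates the resulting one-dimensional integral explicitly (an $\arctan$ appears). You instead discard the prefactor at once via $|\xi|^2/(1+|\xi|^2)\le 1$, extend the $\xi_3$-integration to all of $\mathbb{R}$, and apply the exact identity $\int_{\mathbb{R}}(a\xi_3^2+b)^{-1}\,d\xi_3=\pi/\sqrt{ab}$ with $a=1+\rho^2$, $b=\rho^2(1+\rho^2+T^2)$; the Jacobian $\rho$ cancels the $1/\rho$ produced by $\sqrt{b}$, exactly as you observe, and the remaining $\rho$-integral is trivially bounded by $R/T$ with $T=2(\tau^2+k^2)^{1/2}\ge 2\tau$. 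Your version avoids the region splitting and the $\arctan$ computation at the harmless cost of enlarging the domain of integration, and both yield the stated bound $CR/(\mathpzc{d}^2\tau)$ with the correct powers of $R$, $\mathpzc{d}$ and $\tau$.
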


\begin{proof} Since $ \{ \xi \in \mathbb{R}^3 : |\xi| < R \} \subset \{ \xi \in \mathbb{R}^3 : |\xi'| < R, |\xi^{(3)}| < R \} $, the integral in the statement is bounded by
\[  \int_{|\xi'| < R} \int_{|\xi^{(3)}| < R} (1 + |\xi|^2)^{-1} \frac{\mathpzc{d}^{-2} |\xi|^2}{|\xi|^2 + |\xi|^2|\xi'|^2 + 4(\tau^2 + k^2) |\xi'|^2} \, d\xi^{(3)}\, d\xi'. \]
Changing to cylindrical coordinates it is enough to study
\begin{equation*}
I(R, \tau) := \int_{[0, R] \times [0, R]} (1 + r^2 + t^2)^{-1} \frac{r (r^2 + t^2)}{r^2 + t^2 + (r^2 + t^2)r^2 + 4(\tau^2 + k^2) r^2} \, dt\,dr.
\end{equation*}
One has
\begin{gather*}
I(R, \tau) \leq \int_{[0, R] \times [0, R]} (1 + r^2 + t^2)^{-1} \frac{(r^2 + t^2)^{1/2}}{(r^2 + t^2 + (r^2 + t^2)r^2)^{1/2}}\\
\times \frac{r (r^2 + t^2)^{1/2}}{((r^2 + t^2)r^2 + 4(\tau^2 + k^2) r^2)^{1/2}} \, dt\,dr \\
= \int_{[0, R] \times [0, R]} (1 + r^2 + t^2)^{-1} \frac{1}{(1 + r^2)^{1/2}} \frac{r (r^2 + t^2)^{1/2}}{((r^2 + t^2)r^2 + 4(\tau^2 + k^2) r^2)^{1/2}} \, dt\,dr \\
\leq \sqrt[]{2}  \int_{0 < r < t < R} (1 + r^2 + t^2)^{-1} \frac{rt}{(t^2r^2 + 4(\tau^2 + k^2) r^2)^{1/2}} \, dt\,dr \\
+\, \sqrt[]{2} \int_{0 < t < r < R} (1 + r^2 + t^2)^{-1} \frac{r^2}{(r^2r^2 + 4(\tau^2 + k^2) r^2)^{1/2}} \, dt\,dr 
\end{gather*}
\begin{gather*}
= 2\, \sqrt[]{2}  \int_{0 < r < t < R} (1 + r^2 + t^2)^{-1} \frac{t}{(t^2 + 4(\tau^2 + k^2))^{1/2}} \, dt\,dr \\
= 2\, \sqrt[]{2}  \int_{[0, R]} \frac{t}{(t^2 + 4(\tau^2 + k^2))^{1/2}} \frac{1}{(1 + t^2)^{1/2}} \arctan{\left(\frac{t}{(1 + t^2)^{1/2}}\right)} \, dt \leq C \frac{R}{\tau}.
\end{gather*}
\end{proof}

Therefore,
\begin{gather*}
\norm{f}{}{H^{-1}(\mathbb{E})} + \norm{g}{}{H^{-1}(\mathbb{E})} \leq C \left( B\big( \delta_C(C_\Gamma^1, C_\Gamma^2) \big) e^{c(R + \tau)} + \tau^{-1}  R^{1/2} \right. \\
\left. + \mathpzc{d}^s R^{1/2} + R^{1/2} \mathpzc{d}^{-1} \tau^{-1/2} + R^{-1} \right).
\end{gather*}

Now we choose $ R $ in such a way that $ \mathpzc{d}^s R^{1/2} + R^{1/2} \mathpzc{d}^{-1} \tau^{-1/2} $ behaves as $ R^{-1} $, that is,
\[ R = \frac{ \mathpzc{d}^{2/3} \tau^{1/3} }{ (1 + \mathpzc{d}^{1+s} \tau^{1/2})^{2/3} } ,\]
hence
\begin{equation*}
\norm{f}{}{H^{-1}(\mathbb{E})} + \norm{g}{}{H^{-1}(\mathbb{E})} \leq C \left( B\big( \delta_C(C_\Gamma^1, C_\Gamma^2) \big) e^{c \tau} + \left( \mathpzc{d}^s + \frac{1}{\mathpzc{d} \tau^{1/2}} \right)^{2/3} \right).
\end{equation*}
Choosing $ \tau = \mathpzc{d}^{-2(1+s)} $ the estimate becomes
\begin{equation*}
\norm{f}{}{H^{-1}(\mathbb{E})} + \norm{g}{}{H^{-1}(\mathbb{E})} \leq C \left( B\big( \delta_C(C_\Gamma^1, C_\Gamma^2) \big) e^{c \mathpzc{d}^{-2(1+s)}} + \mathpzc{d}^{\frac{2s}{3}} \right).
\end{equation*}
On the other hand, the a priori bound was chosen to have
\[ \norm{f}{}{H^{s}(\inte)} + \norm{g}{}{H^{s}(\inte)} \leq C(M), \]
for $ 0 < s < 1/2 $. Finally, interpolation theory ensures the existence of two constants $ C' = C'(\rho, M) $ and $ C = C(\rho, \inte, M, \omega) $ such that, for any $ \mathpzc{d} \leq C' $, the following estimate holds
\begin{equation}\label{es:normL2de_f_g:locd}
\norm{f}{}{L^2(\inte)} + \norm{g}{}{L^2(\inte)} \leq C \left( B\big( \delta_C(C_\Gamma^1, C_\Gamma^2) \big) e^{c \mathpzc{d}^{-2(1+s)}} + \mathpzc{d}^{\frac{2s}{3}} \right)^\theta
\end{equation}
with $ 0 = -\theta + (1 - \theta) s $.

The idea now is to transfer this estimate from $ f, g $ to the difference of the coefficients $ \tilde{\mu}_1 - \tilde{\mu}_2 $ and $ \tilde{\gamma}_1 - \tilde{\gamma}_2 $. This can be accomplished by using the following Carleman estimate.

There exists a positive constant $ C(\inte) $ such that, for all $ h \leq 1 $ and any function $ \phi $ smooth enough, the following estimate holds
\begin{gather*}
h \norm{e^{\varphi / h}\phi}{2}{L^2(\inte)} + h^3 \norm{e^{\varphi / h} \nabla \phi}{2}{L^2(\inte; \mathbb{C}^3)} \leq\\
\leq C \left( h^4 \norm{e^{\varphi / h} \Delta \phi}{2}{L^2(\inte)} + h \norm{e^{\varphi / h}\phi}{2}{L^2(\bou)} + h^3 \norm{e^{\varphi / h} \nabla \phi}{2}{L^2(\bou; \mathbb{C}^3)} \right),
\end{gather*}
where $ \varphi = 1/2 |x-x_0|^2 $ with $ x_0 \notin \overline{\inte} $. The constant here depends on the distance from $ x_0 $ to $ \inte $ and on the diameter of $ \inte $. A Carleman estimate of this type can be found in \cite{I}.

A simple computation give:
\begin{align*}
f &= \mathbf{1}_\inte \tilde{\gamma}_1^{-1/2} \left[ \Delta(\tilde{\gamma}_1^{1/2} - \tilde{\gamma}_2^{1/2}) + q_f(\tilde{\gamma}_1^{1/2} - \tilde{\gamma}_2^{1/2}) + p_f(\tilde{\mu}_1^{1/2} - \tilde{\mu}_2^{1/2}) \right],\\
g &= \mathbf{1}_\inte \tilde{\mu}_1^{-1/2} \left[ \Delta(\tilde{\mu}_1^{1/2} - \tilde{\mu}_2^{1/2}) + q_g(\tilde{\mu}_1^{1/2} - \tilde{\mu}_2^{1/2}) + p_g(\tilde{\gamma}_1^{1/2} - \tilde{\gamma}_2^{1/2}) \right];
\end{align*}
where
\begin{align*}
q_f = - \left( \frac{\Delta \tilde{\gamma}_2^{1/2}}{\tilde{\gamma}_2^{1/2}} + \omega^2 \tilde{\gamma}_1^{1/2}(\tilde{\gamma}_1^{1/2} \tilde{\mu}_1 + \tilde{\gamma}_2^{1/2} \tilde{\mu}_2) \right),& &
p_f = - \omega^2 \tilde{\gamma}_1 \tilde{\gamma}_2^{1/2} (\tilde{\mu}_1^{1/2} + \tilde{\mu}_2^{1/2}),\\
q_g = - \left( \frac{\Delta \tilde{\mu}_2^{1/2}}{\tilde{\mu}_2^{1/2}} + \omega^2 \tilde{\mu}_1^{1/2}(\tilde{\mu}_1^{1/2} \tilde{\gamma}_1 + \tilde{\mu}_2^{1/2} \tilde{\gamma}_2) \right),& &
p_g = - \omega^2 \tilde{\mu}_1 \tilde{\mu}_2^{1/2} (\tilde{\gamma}_1^{1/2} + \tilde{\gamma}_2^{1/2}).
\end{align*}
Note that, thanks to the a priori bound, we have the following differential inequalities:
\begin{align*}
|\Delta(\tilde{\gamma}_1^{1/2} - \tilde{\gamma}_2^{1/2})| \leq C(M) (|f| + |\tilde{\gamma}_1^{1/2} - \tilde{\gamma}_2^{1/2}| + |\tilde{\mu}_1^{1/2} - \tilde{\mu}_2^{1/2}|),\\
|\Delta(\tilde{\mu}_1^{1/2} - \tilde{\mu}_2^{1/2})| \leq C(M) (|g| + |\tilde{\gamma}_1^{1/2} - \tilde{\gamma}_2^{1/2}| + |\tilde{\mu}_1^{1/2} - \tilde{\mu}_2^{1/2}|).
\end{align*}

In order to simplify the notation, we shall write $ \phi_1 = \tilde{\gamma}_1^{1/2} - \tilde{\gamma}_2^{1/2} $ and $ \phi_2 = \tilde{\mu}_1^{1/2} - \tilde{\mu}_2^{1/2} $. By the differential inequalities written above and the Carleman estimate, one has
\begin{gather*}
\sum_{j=1,2} \left( h \norm{e^{\varphi / h}\phi_j}{2}{L^2(\inte)} + h^3 \norm{e^{\varphi / h} \nabla \phi_j}{2}{L^2(\inte; \mathbb{C}^3)} \right) \leq\\
\leq C'' \sum_{j=1,2} \left( h^4 \norm{e^{\varphi / h} \phi_j}{2}{L^2(\inte)} + h \norm{e^{\varphi / h} \phi_j }{2}{L^2(\bou)} + h^3 \norm{e^{\varphi / h} \nabla \phi_j}{2}{L^2(\bou; \mathbb{C}^3)} \right)\\
+ C'' h^4 \left( \norm{e^{\varphi / h} f}{2}{L^2(\inte)} + \norm{e^{\varphi / h} g}{2}{L^2(\inte)} \right),
\end{gather*}
where the constant is $ C'' = C''(\inte, M) $ and $ \varphi(x) = 1/2 |x-x_0|^2 $ with $ x_0 \notin \overline{\inte} $. The terms $ h^4 \norm{e^{\varphi / h} \phi_j}{2}{L^2(\inte)} $, with $ j = 1,2 $, can be absorbed by the left hand side of the inequality. Hence, if $ d_1 = \textrm{inf} \{ |x - x_0|^2 : x \in \inte \} $ and $ d_2 = \textrm{sup} \{ |x - x_0|^2 : x \in \inte \} $ we get, for any $ h < C''(\inte, M)^{-1/3} $, that
\begin{gather*}
e^{d_1 / h} \sum_{j=1,2} \left( h \norm{\phi_j}{2}{L^2(\inte)} + h^3 \norm{\nabla \phi_j}{2}{L^2(\inte; \mathbb{C}^3)} \right) \leq C'' e^{d_2 / h}\times\\
\left[ h^4 \left( \norm{f}{2}{L^2(\inte)} + \norm{g}{2}{L^2(\inte)} \right) + \sum_{j=1,2} \left( h \norm{\phi_j }{2}{L^2(\bou)} + h^3 \norm{\nabla \phi_j}{2}{L^2(\bou; \mathbb{C}^3)} \right)\right ].
\end{gather*}
But now we can easily estimate
\begin{gather*}
\norm{\phi_1}{}{L^2(\bou)} + \norm{\nabla \phi_1}{}{L^2(\bou; \mathbb{C}^3)} \leq C B \big( \delta_C(C_\Gamma^1, C_\Gamma^2) \big),\\
\norm{\phi_2}{}{L^2(\bou)} + \norm{\nabla \phi_2}{}{L^2(\bou; \mathbb{C}^3)} \leq C B \big( \delta_C(C_\Gamma^1, C_\Gamma^2) \big),\\
\norm{\tilde{\gamma}_1 - \tilde{\gamma}_2}{}{L^2(\inte)} + \norm{\nabla (\tilde{\gamma}_1 - \tilde{\gamma}_2)}{}{L^2(\inte; \mathbb{C}^3)} \leq C \left( \norm{\phi_1}{}{L^2(\inte)} + \norm{\nabla \phi_1}{}{L^2(\inte; \mathbb{C}^3)} \right),\\
\norm{\tilde{\mu}_1 - \tilde{\mu}_2}{}{L^2(\inte)} + \norm{\nabla (\tilde{\mu}_1 - \tilde{\mu}_2)}{}{L^2(\inte; \mathbb{C}^3)} \leq C \left( \norm{\phi_2}{}{L^2(\inte)} + \norm{\nabla \phi_2}{}{L^2(\inte; \mathbb{C}^3)} \right).
\end{gather*}
The constants above depend on the a priori bounds $ M $. With these inequalities and estimate (\ref{es:normL2de_f_g:locd}), we obtain
\begin{gather*}
\norm{\tilde{\gamma}_1 - \tilde{\gamma}_2}{}{H^1(\inte)} + \norm{\tilde{\mu}_1 - \tilde{\mu}_2}{}{H^1(\inte)} \leq C e^{\frac{d_2 - d_1}{2h}} B\big( \delta_C(C_\Gamma^1, C_\Gamma^2) \big)\\
+ C e^{\frac{d_2 - d_1}{2h}} \left( B\big( \delta_C(C_\Gamma^1, C_\Gamma^2) \big) e^{c \mathpzc{d}^{-2(1+s)}} + \mathpzc{d}^{\frac{2s}{3}} \right)^\frac{s}{1 + s},
\end{gather*}
where $ d_2 > d_1 $, $ 0 < s < 1/2 $, $ C = C(\rho, \inte, M) $, $ \mathpzc{d} \leq C'(\rho, M) $, $ c = c(\inte) $ and $ h < C''(\inte, M)^{-1/3} $. To end up with the estimate given in the statement, it is enough to choose the parameter $ \mathpzc{d} $ as
\[ \mathpzc{d}^{-2(1+s)} = -\frac{1}{2c} \mathrm{log} \, B \big( \delta_C(C_\Gamma^1, C_\Gamma^2) \big), \]
and to note that
\[ 0 < \frac{s^2}{3(s + 1)^2} < \frac{s^2}{3}. \]

\section{The domain $ U $ is partially spherical}
Along this section we assume $ U $ to be a suitable partially spherical domain and we follow the notation in Definition \ref{def:geometryU} and Definition \ref{def:suitableU}. Furthermore, $ n $ and $ \nu $ will denote the outward unit normal forms of $ U $ and $ \inte $, respectively.

The basic idea in this section is to use the Kelvin transform $ \mathcal{K} $ to generalize our result on partially flat domain to the case of partially spherical domain. To achieve this, we study the behavior of Maxwell's equations and the distance $ \delta_C $ under $ \mathcal{K} $. 

Note that $ \mathcal{K} = \mathcal{K}^{-1} $ and $ \mathcal{K} $ is a conformal transformation from $ (\inte, e) $ onto $ (U, e) $:
\[ \mathcal{K}^* e = \frac{r_1^4}{|\centerdot|^4} e, \]
where $ \mathcal{K}^* $ denotes the pull-back of $ \mathcal{K} $.

Let $ \tilde{E} = \mathcal{K}^* E $, $ \tilde{H} = \mathcal{K}^* H $, $ \tilde{\mu} = \mathcal{K}^* \mu $, and $ \tilde{\gamma} = \mathcal{K}^* \gamma $. The following is the transformation law for Maxwell's equations under the Kelvin transform.

\begin{lemma}\label{le:invarianceK}\rm
One has $ E, H \in \HcurlU{} $ is solution of
\[ dH + i \omega \gamma \ast\! E = 0 \qquad dE - i \omega \mu \ast\! H = 0 \]
in $ U $, if and only if,
$ \tilde{E}, \tilde{H} \in \Hcurl{} $ is a solution of
\[ d\tilde{H} + i \omega \tilde{\gamma} \frac{r_1^2}{|\centerdot|^2} \ast\! \tilde{E} = 0 \qquad d\tilde{E} - i \omega \tilde{\mu} \frac{r_1^2}{|\centerdot|^2} \ast\! \tilde{H} = 0 \]
in $ \inte $.
\end{lemma}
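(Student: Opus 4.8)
The plan is to exploit the fact that Maxwell's equations, written in the language of differential forms, are conformally natural up to an explicit scalar weight. Two facts do all the work: the naturality of the exterior derivative, $ \mathcal{K}^*(d\omega) = d(\mathcal{K}^*\omega) $, and the precise conformal scaling of the Hodge star. Since $ d $ commutes with pullback and pullback is multiplicative on products of functions and forms, applying $ \mathcal{K}^* $ to $ dH + i\omega\gamma\ast E = 0 $ gives at once $ d\tilde H + i\omega\,\tilde\gamma\,\mathcal{K}^*(\ast E) = 0 $, using $ \mathcal{K}^*\gamma = \tilde\gamma $ and $ \mathcal{K}^* H = \tilde H $. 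Thus the whole computation reduces to evaluating $ \mathcal{K}^*(\ast E) $.

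The key step is therefore to establish how $ \ast $ transforms under a conformal pullback. For a conformal diffeomorphism with $ \mathcal{K}^* e = \lambda^2 e $ and a $ k $-form $ \omega $, I would prove
\[ \mathcal{K}^*(\ast\omega) = \lambda^{\,n-2k}\,\ast(\mathcal{K}^*\omega) \]
directly from the defining relation $ \alpha\wedge\ast\beta = \langle\alpha,\beta\rangle\,dV $. Pulling this relation back and tracking three scalings, namely $ \mathcal{K}^*dV = \lambda^n\,dV $ for the volume element, $ \langle\alpha,\beta\rangle\circ\mathcal{K} = \lambda^{-2k}\langle\mathcal{K}^*\alpha,\mathcal{K}^*\beta\rangle $ for the induced inner product on $ k $-forms, and comparison with the definition of $ \ast $ on $ \inte $, one gets $ \mathcal{K}^*\alpha\wedge\mathcal{K}^*(\ast\beta) = \lambda^{n-2k}\,\mathcal{K}^*\alpha\wedge\ast(\mathcal{K}^*\beta) $; nondegeneracy of the wedge pairing and the surjectivity of $ \mathcal{K}^* $ on forms then yield the stated identity. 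Specialising to $ n=3 $, $ k=1 $ and $ \lambda = r_1^2/|\centerdot|^2 $ produces the weight $ \lambda^{n-2k}=\lambda=r_1^2/|\centerdot|^2 $, so $ \mathcal{K}^*(\ast E) = (r_1^2/|\centerdot|^2)\ast\tilde E $.

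Substituting this back turns the first Maxwell equation into $ d\tilde H + i\omega\tilde\gamma(r_1^2/|\centerdot|^2)\ast\tilde E = 0 $, and the identical manipulation applied to $ dE - i\omega\mu\ast H = 0 $ gives $ d\tilde E - i\omega\tilde\mu(r_1^2/|\centerdot|^2)\ast\tilde H = 0 $, which are exactly the transformed equations. For the converse implication I would note that every operation used—pullback by the diffeomorphism and multiplication by the nonvanishing factor $ r_1^2/|\centerdot|^2 $—is invertible; equivalently, since $ \mathcal{K}=\mathcal{K}^{-1} $ is an involution, pulling the transformed system back by $ \mathcal{K} $ (now with the roles of $ \inte $ and $ U $ interchanged) recovers the original system. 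Hence the equivalence holds in both directions.

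Finally I would justify the functional-space claim, that $ E,H\in\HcurlU{} $ if and only if $ \tilde E,\tilde H\in\Hcurl{} $. Because $ 0\notin\overline U $ and $ \inte=\mathcal{K}(U) $ is likewise bounded away from the origin, $ \mathcal{K} $ restricts to a diffeomorphism smooth up to the boundary between neighbourhoods of $ \overline{\inte} $ and $ \overline U $, with Jacobian and conformal factor bounded above and below on these compact sets. Pullback by such a map preserves $ L^2 $, and applying the same conformal scaling of $ \ast $ to the $ 2 $-form $ dE $ intertwines the curl with a bounded factor $ \lambda^{-1} $, so membership in the curl space is preserved with equivalent norms. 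The main obstacle is purely bookkeeping: getting the exponent and sign in $ \lambda^{n-2k} $ exactly right, since an error in tracking the volume, the form inner product, or the direction of $ \mathcal{K} $ would silently propagate the wrong conformal weight into the final equations; everything else is routine once the origin is verified to stay away from $ \overline U $ and $ \overline{\inte} $.
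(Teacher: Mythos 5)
Your argument is correct and is essentially the proof the paper gives: both rest on $ d\,\mathcal{K}^* = \mathcal{K}^*\, d $ together with the conformal rescaling law for the Hodge star, which for a $ k $-form in dimension $ 3 $ yields the weight $ (r_1^2/|\centerdot|^2)^{3-2k} $, i.e. $ r_1^2/|\centerdot|^2 $ on $ 1 $-forms; your explicit derivation of that law from $ \alpha\wedge\ast\beta=\langle\alpha,\beta\rangle\,dV $ and your remarks on the $ \Hcurl{} $ membership merely make explicit what the paper's two-line proof leaves to the reader. The one point to watch (shared with the paper) is that the inversion $ \mathcal{K} $ is orientation-reversing, so strictly $ \mathcal{K}^* dV = -\lambda^{3}\, dV $ and the pullback identity for $ \ast $ acquires a sign unless one works with the pulled-back orientation; this is the convention the paper adopts implicitly, and your statement $ \mathcal{K}^* dV = \lambda^{n} dV $ should be read the same way.
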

\begin{proof}
The proof follows easily from
\begin{equation*}
d \mathcal{K}^* \eta = \mathcal{K}^* d\eta, \quad \mathcal{K}^* (\ast \eta) = \ast_{\mathcal{K}^* e} \mathcal{K}^* \eta, \quad *_{ce} \eta = c^{3/2-k} \ast\! \eta.
\end{equation*}
Here $ \eta $ is $ k $-form and $ c $ is an arbitrary positive smooth function.
\end{proof}

\begin{lemma}\label{le:equivalenceK}\rm
Given $ u_j \in \HcurlU{} $ and $ \tilde{v}_j \in \Hcurl{} $ with $ j= 1,2 $, let us consider $ \tilde{u}_j = \mathcal{K}^* u_j \in \Hcurl{} $ and $ v_j = \mathcal{K}^* \tilde{v}_j \in \HcurlU{} $.
\begin{itemize}
\item[(a)] For any $ z \in B^{1/2}(\bouU; \Lambda^1 T\mathbb{E}) $ one has $ \dual{\ast (n \wedge u_j)}{z} = \dual{\ast (\nu \wedge \tilde{u}_j)}{w}, $ where $ w = \mathcal{K}^* v|_{\bou} \in B^{1/2}(\bou; \Lambda^1 T\mathbb{E}) $ with $ v \in H^1(U; \Lambda^1 T\mathbb{E}) $ such that $ v|_{\bouU} = z $. Furthermore,
\begin{equation}\label{es:equivalenceKform1}
\norm{w}{}{B^{1/2}(\bou; \Lambda^1 T\mathbb{E})} \leq C \norm{z}{}{B^{1/2}(\bouU; \Lambda^1 T\mathbb{E})}.
\end{equation}
For any $ w \in B^{1/2}(\bou; \Lambda^1 T\mathbb{E}) $ one has $ \dual{\ast (\nu \wedge \tilde{v}_j)}{w} = \dual{\ast (n \wedge v_j)}{z}, $ where $ z = \mathcal{K}^* u|_{\bouU} \in B^{1/2}(\bouU; \Lambda^1 T\mathbb{E}) $ with $ u \in H^1(\inte; \Lambda^1 T\mathbb{E}) $ such that $ u|_{\bou} = w $. Moreover,
\begin{equation}\label{es:equivalenceKform2}
\norm{z}{}{B^{1/2}(\bouU; \Lambda^1 T\mathbb{E})} \leq C \norm{w}{}{B^{1/2}(\bou; \Lambda^1 T\mathbb{E})}.
\end{equation}
\item[(b)] For any $ h \in B^{1/2}(\bouU) $ one has $ \dual{\Div \ast\! (n \wedge u_j)}{h} = \dual{\Div \ast\! (\nu \wedge \tilde{u}_j)}{g} $ where $ g = \mathcal{K}^* f|_{\bou} \in B^{1/2}(\bou) $ with $ f \in H^1(U) $ such that $ f|_{\bouU} = h $. Moreover,
\begin{equation}\label{es:equivalenceK1}
\norm{g}{}{B^{1/2}(\bou)} \leq C \norm{h}{}{B^{1/2}(\bouU)}.
\end{equation}
For any $ g \in B^{1/2}(\bou) $ one has $ \dual{\Div \ast\! (\nu \wedge \tilde{v}_j)}{g} = \dual{\Div \ast\! (n \wedge v_j)}{h} $ where $ h = \mathcal{K}^* f|_{\bouU} \in B^{1/2}(\bouU) $ with $ f \in H^1(\inte) $ such that $ f|_{\bou} = g $. Moreover,
\begin{equation}\label{es:equivalenceK2}
\norm{h}{}{B^{1/2}(\bouU)} \leq C \norm{g}{}{B^{1/2}(\bou)}.
\end{equation}
\item[(c)] The following estimates hold
\begin{equation*}
\norm{\ast (n \wedge u_1) - \ast (n \wedge u_2)}{}{TH(\bouU)} \leq C \norm{\ast (\nu \wedge \tilde{u}_1) - \ast (\nu \wedge \tilde{u}_2)}{}{TH(\bou)}
\end{equation*}
and
\begin{equation*}
\norm{\ast (\nu \wedge \tilde{v}_1) - \ast (\nu \wedge \tilde{v}_2)}{}{TH(\bou)} \leq C' \norm{\ast (n \wedge v_1) - \ast (n \wedge v_2)}{}{TH(\bouU)}.
\end{equation*}
\end{itemize}
\end{lemma}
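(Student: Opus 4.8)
The plan is to reduce every pairing in the statement to the integral of a top-degree ($3$-)form over $U$ or $\inte$, because such integrals are preserved by pullback up to the sign of the Jacobian and, crucially, carry \emph{no} conformal factor. This is exactly what makes the Kelvin transform—which is only conformal, not isometric—act trivially on these boundary quantities, even though by Lemma~\ref{le:invarianceK} the Hodge star it transports scales by the weights $c^{3/2-k}$ of the conformal factor $c = r_1^4/|\centerdot|^4$. The naive obstruction is that pulling $\ast$ back through $\ast(n\wedge\centerdot)$ or $\Div$ seems to produce these weights; the point of the proof is that once the integrand is recognized as a top-form, they cannot appear.

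For (a), I start from the weak definition of the tangential trace (the integration by parts carried out, for vector fields, in the proof of Lemma~\ref{le:vanishing_bou_1}, transcribed to $1$-forms): for $u_j \in \HcurlU{}$ and any $v \in H^1(U; \Lambda^1 T\mathbb{E})$ with $v|_{\bouU} = z$,
\[ \dual{\ast(n \wedge u_j)}{z} = \int_U \big( \innerf{\ast d u_j}{\overline{v}} - \innerf{u_j}{\overline{\ast d v}} \big)\, dV. \]
Since $\ast\ast = 1$ in dimension three, each summand is a $3$-form, namely $\innerf{\ast d u_j}{\overline v}\, dV = d u_j \wedge \overline v$ and $\innerf{u_j}{\overline{\ast d v}}\, dV = u_j \wedge \overline{dv}$, so the integrand collapses to the exact form $d(u_j \wedge \overline v)$. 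Because $d$ and $\wedge$ commute with pullback and $\tilde u_j = \mathcal{K}^* u_j$, one has $\mathcal{K}^* d(u_j \wedge \overline v) = d(\tilde u_j \wedge \overline{\mathcal{K}^* v})$, and the change of variables in this top-form integral yields, up to the Jacobian sign addressed in the last paragraph,
\[ \dual{\ast(n \wedge u_j)}{z} = \int_U d(u_j \wedge \overline v) = \int_\inte d(\tilde u_j \wedge \overline{\mathcal{K}^* v}) = \dual{\ast(\nu \wedge \tilde u_j)}{w}, \]
where $w = \mathcal{K}^* v|_{\bou}$ and the last equality is the same weak definition now on $\inte$. The estimate (\ref{es:equivalenceKform1}) follows by choosing $v$ with $\norm{v}{}{H^1(U)} \le C\norm{z}{}{B^{1/2}(\bouU; \Lambda^1 T\mathbb{E})}$ and using that $\mathcal{K}$ is a smooth diffeomorphism with bounded derivatives up to the closure (its only singularity, the origin, lies outside $\overline{\inte}$), so $\mathcal{K}^*$ maps $H^1(U)\to H^1(\inte)$ boundedly, followed by continuity of the trace. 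The second half of (a), and part (b), go the same way: for (b) I start instead from the normal-component formula (\ref{def:normal-component_forms}), use $\delta(\ast d u_j)=0$ to kill one term, recognize $\innerf{\ast d u_j}{\overline{df}}\, dV = d(du_j \wedge \overline f)$ as exact, and transform $\dual{\Div \ast(n\wedge u_j)}{h} = -\int_U d(du_j \wedge \overline f)$ into $-\int_\inte d(d\tilde u_j \wedge \overline{\mathcal{K}^* f}) = \dual{\Div \ast(\nu\wedge \tilde u_j)}{g}$, with $g = \mathcal{K}^* f|_{\bou}$, giving (\ref{es:equivalenceK1}) by the same mapping argument.

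Part (c) is then a soft consequence. Expanding the $TH$-norm through the equivalences (\ref{es:equivalent_norm_TH-C1_forms})–(\ref{es:equivalent_norm_TH-C2_forms}), it suffices to control $\norm{\ast(n\wedge(u_1-u_2))}{}{B^{-1/2}(\bouU; \Lambda^1 T\mathbb{E})}$ and $\norm{\Div \ast(n\wedge(u_1-u_2))}{}{B^{-1/2}(\bouU)}$, each a supremum of the pairings computed in (a) and (b). Applying those identities to $u_1-u_2$ and the bounds (\ref{es:equivalenceKform1}), (\ref{es:equivalenceK1}) on the transferred test functions converts a supremum over test data on $\bouU$ into a supremum over data on $\bou$, producing the dual norms of $\ast(\nu\wedge(\tilde u_1-\tilde u_2))$ and of its surface divergence; reassembling via the norm equivalence on $\bou$ gives the first inequality, and the symmetric argument, exchanging $U$ and $\inte$ through $\mathcal{K}=\mathcal{K}^{-1}$, gives the second.

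The one step demanding genuine care is orientation together with the choice of outward normal. The inversion defining $\mathcal{K}$ is orientation-reversing in three dimensions, so the top-form change of variables carries a sign; this must be matched against the fact that the outward normals $n$ of $U$ and $\nu$ of $\inte$ correspond under $\mathcal{K}$ with a compensating reversal, so that with the conventions fixed in the preliminaries the two signs cancel and the pairings are \emph{equal}, not merely equal up to sign. I expect this bookkeeping—and the verification that the conformal weights $c^{3/2-k}$ of Lemma~\ref{le:invarianceK} genuinely disappear once the integrand is a top-form—to be the only delicate point; everything else is naturality of $d$, $\wedge$ and $\mathcal{K}^*$ together with the boundedness of $\mathcal{K}^*$ on the relevant Sobolev and Besov spaces.
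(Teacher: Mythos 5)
Your argument follows essentially the same route as the paper's (very terse) proof: both rest on the weak definitions of the tangential trace and the surface divergence together with the behaviour of $d$, $\wedge$ and $\ast$ under $\mathcal{K}^*$, and your repackaging of the integrands as exact top-degree forms is simply a clean way of seeing why the conformal weights $c^{3/2-k}$ from Lemma \ref{le:invarianceK} cancel; parts (b) and (c) are handled exactly as in the paper, via \eqref{def:normal-component_forms}, $\delta(\ast du_j)=0$, and the norm equivalences \eqref{es:equivalent_norm_TH-C1_forms}--\eqref{es:equivalent_norm_TH-C2_forms}. The one caveat concerns the step you yourself single out as delicate: in the weak formulation there is no explicit normal available to supply the compensating sign you invoke, and since $\mathcal{K}$ is orientation-reversing in the bulk and maps outward normal to outward normal (hence reverses the induced boundary orientation as well), with the standard conventions the pairings in (a) and (b) actually transform with a factor $-1$ rather than $+1$. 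This is immaterial for the estimates in (c) and for Proposition \ref{pro:estima_distance}, which only use norms, but the cancellation mechanism as you state it does not close that step.
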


\begin{proof}
The proof of the identities is an immediate consequence of the identities stated in the proof of Lemma \ref{le:invarianceK} and the weak definitions of tangential trace and surface divergence. Proving (\ref{es:equivalenceK1}), (\ref{es:equivalenceK2}) is an easy computation and (\ref{es:equivalenceKform1}), (\ref{es:equivalenceKform2}) follow easily in coordinates from (\ref{es:equivalenceK1}), (\ref{es:equivalenceK2}) and (\ref{es:besov_product}). Finally, the estimates in (c) are a consequence of (a), (b), (\ref{es:equivalent_norm_TH-C1}) and (\ref{es:equivalent_norm_TH-C2}).
\end{proof}

\begin{proposition}\label{pro:estima_distance}\rm
One has that
\[ \delta_C(\tilde{C}^1_{\tilde{\Gamma}}, \tilde{C}^j_{\tilde{\Gamma}}) \leq C \delta_C(C^1_\Gamma, C^2_\Gamma), \]
where
\[ C^j_\Gamma = C(\mu_j, \gamma_j; \Gamma), \quad \tilde{C}^j_{\tilde{\Gamma}} = C \left(\frac{r_1^2}{|\centerdot|^2} \tilde{\mu}_j, \frac{r_1^2}{|\centerdot|^2} \tilde{\gamma}_j; \tilde{\Gamma} \right), \]
with $ j = 1,2 $.
\end{proposition}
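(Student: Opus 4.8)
The plan is to use that the Kelvin transform $\mathcal{K}$ induces, on the one hand, a linear bijection between the two families of solutions and hence between the Cauchy data sets, and on the other hand a bi-Lipschitz isomorphism of the trace spaces in which $\delta_C$ is measured. Granting these two facts, the estimate is obtained by transporting a near-optimal competitor from the $\Gamma$-side to the $\tilde\Gamma$-side through the sup-inf structure of Definition \ref{def:Cdistance_res}.

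First I would set up the bijection of Cauchy data sets. By Lemma \ref{le:invarianceK}, $(E,H)$ solves Maxwell's system with coefficients $\mu_j,\gamma_j$ on $U$ if and only if $(\mathcal{K}^* E, \mathcal{K}^* H)$ solves the transformed system with coefficients $\frac{r_1^2}{|\centerdot|^2}\tilde\mu_j, \frac{r_1^2}{|\centerdot|^2}\tilde\gamma_j$ on $\inte$, and since $\mathcal{K}=\mathcal{K}^{-1}$ this correspondence is invertible. As $\mathcal{K}$ carries $\Gamma$ onto $\tilde\Gamma$ and $\Gamma_0$ onto $\tilde\Gamma_0$, it sends the trace pair $(\crossf{n}{E}, \crossf{n}{H}|_\Gamma)$ to $(\crossf{\nu}{\tilde E}, \crossf{\nu}{\tilde H}|_{\tilde\Gamma})$, preserving both the support condition defining $TH_0$ and the restriction defining the $TH$-component; this yields a linear bijection $\Phi\colon C^j_\Gamma \to \tilde C^j_{\tilde\Gamma}$. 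The identities and estimates of Lemma \ref{le:equivalenceK}, which are phrased through the local duality pairings on $\Gamma$ and $\tilde\Gamma$, then provide a constant $C$ with
\[ C^{-1}\norm{(T,S)}{}{TH_0(\Gamma)\times TH(\Gamma)} \leq \norm{\Phi(T,S)}{}{TH_0(\tilde\Gamma)\times TH(\tilde\Gamma)} \leq C\norm{(T,S)}{}{TH_0(\Gamma)\times TH(\Gamma)}, \]
and, by linearity of $\Phi$, the same two-sided bound for differences of two elements.

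Next I would unwind $\delta_C$. Fix $j\neq k$ and take $(\tilde T_k,\tilde S_k)\in \tilde C^k_{\tilde\Gamma}$ with $\norm{\tilde T_k}{}{TH_0(\tilde\Gamma)}=1$. Set $(T_k,S_k)=\Phi^{-1}(\tilde T_k,\tilde S_k)\in C^k_\Gamma$, so that $c_0:=\norm{T_k}{}{TH_0(\Gamma)}$ satisfies $0<c_0\leq C$. By linearity $(c_0^{-1}T_k, c_0^{-1}S_k)\in C^k_\Gamma$ has unit first component, hence by Definition \ref{def:Cdistance_res} there is $(T_j,S_j)\in C^j_\Gamma$ with
\[ \norm{(T_j,S_j)-(c_0^{-1}T_k,c_0^{-1}S_k)}{}{TH_0(\Gamma)\times TH(\Gamma)} \leq \delta_C(C^1_\Gamma,C^2_\Gamma)+\epsilon \]
for arbitrary $\epsilon>0$. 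Scaling by $c_0$ and applying $\Phi$ to the pair $c_0(T_j,S_j)\in C^j_\Gamma$, the difference bound transports to
\[ \norm{\Phi(c_0 T_j, c_0 S_j)-(\tilde T_k,\tilde S_k)}{}{TH_0(\tilde\Gamma)\times TH(\tilde\Gamma)} \leq C\,c_0\big(\delta_C(C^1_\Gamma,C^2_\Gamma)+\epsilon\big). \]
Since $\Phi(c_0 T_j,c_0 S_j)\in\tilde C^j_{\tilde\Gamma}$ and $c_0\leq C$, taking the infimum over competitors, letting $\epsilon\to0$, then the supremum over normalized $(\tilde T_k,\tilde S_k)$, and finally the maximum over $j\neq k$ yields $\delta_C(\tilde C^1_{\tilde\Gamma},\tilde C^2_{\tilde\Gamma})\leq C\,\delta_C(C^1_\Gamma,C^2_\Gamma)$.

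I expect the only delicate point to be organizational rather than computational. The constraint $\norm{\tilde T_k}{}{TH_0(\tilde\Gamma)}=1$ appearing in $\delta_C(\tilde C^1_{\tilde\Gamma},\tilde C^2_{\tilde\Gamma})$ does not match the constraint $\norm{T_k}{}{TH_0(\Gamma)}=1$ used on the $\Gamma$-side, and it is the linearity of the Cauchy data sets together with the two-sided boundedness of $\Phi$ that absorbs this mismatch into $C$. One must also check that the comparisons invoked are genuinely in the restricted norms $TH_0$ and $TH(\centerdot)$ and respect the correspondence $\Gamma\leftrightarrow\tilde\Gamma$; this is precisely why Lemma \ref{le:equivalenceK} is formulated through the localized duality pairings rather than full-boundary norms, so that the required analytic content is already in hand and the proof reduces to the above bookkeeping.
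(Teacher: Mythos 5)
Your proposal is correct and follows essentially the same route as the paper: the paper's proof is a two-line appeal to Lemma \ref{le:invarianceK} (Kelvin invariance of the solution sets) and item (c) of Lemma \ref{le:equivalenceK} (two-sided comparability of the tangential-trace norms), which is exactly the content of your map $\Phi$ and its bi-Lipschitz bounds. The only difference is that you explicitly carry out the sup--inf bookkeeping and the rescaling needed to reconcile the unit-norm constraints on the two sides, which the paper leaves implicit.
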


\begin{proof}
Considering $ E_j, H_j $ and $ \tilde{E}_j, \tilde{H}_j $ as $ u_j $ and $ \tilde{v}_j $ in the statement of Lemma \ref{le:equivalenceK}, this proposition is a consequence of Lemma \ref{le:invarianceK} and the item (c) in Lemma \ref{le:equivalenceK}.
\end{proof}

In order to end up with the proof in the case that $ U $ is partially spherical, it is enough to use Proposition \ref{pro:estima_distance} and recall that
\begin{align*}
\norm{\mu_1 - \mu_2}{}{H^1(U)} \leq C \norm{\frac{r_1^2}{|\centerdot|^2}(\tilde{\mu}_1 - \tilde{\mu}_2)}{}{H^1(\inte)}\\
\norm{\gamma_1 - \gamma_2}{}{H^1(U)} \leq C \norm{\frac{r_1^2}{|\centerdot|^2}(\tilde{\gamma} - \tilde{\gamma}_2)}{}{H^1(\inte)}.
\end{align*}


\end{document}